\newtheorem{theorem}{Theorem}[section]
\newtheorem{lm}[theorem]{Lemma}
\newtheorem{tr}[theorem]{Theorem}
\newtheorem{cor}[theorem]{Corollary}
\newtheorem{rem}[theorem]{Remark}
\newtheorem{pr}[theorem]{Proposition}
\newtheorem{ex}[theorem]{Example}
\newcommand{\1}{<}
\newcommand{\2}{\leq}
\newcommand{\3}{\geq}
\newcommand{\4}{>}
\begin{document}
\title{Linkage principle for ortho-symplectic supergroups}
\begin{abstract}
The purpose of the paper is to derive linkage principle for modular representations of ortho-symplectic supergroups. We follow the approach of Doty and investigate in detail the representation theory of the orthosymplectic group $OSP(2|1)$ and that of its Frobenius thickening. Using the description of flags and adjacent Borel supersubgroups we derive first  the strong linkage for the Frobenius thickening $G_rT$ of the orthosymplectic supergroup $G$ of type $SpO(2m|2n+1)$ and $SpO(2m|2n)$. Based on this, we derive the linkage principle for 
orthosymplectic supergroup $SpO(2m|2n+1)$ and $SpO(2m|2n)$.
\end{abstract}
\thanks{This publication was made possible by a NPRF award NPRP 6 - 1059 - 1 - 208 from the Qatar National Research Fund 
(a member of the Qatar Foundation). The statements made herein are solely the responsibility of the authors.}
\author{Franti\v sek Marko}
\address{The Pennsylvania State University \\ 76 University Drive, \\ Hazleton, PA 18202, \\ USA}
\email{fxm13@psu.edu}
\author{Alexandr N. Zubkov}
\address{Sobolev Institute of Mathematics, Siberian Branch of Russian Academy of Science (SORAN), Omsk, Pevtzova 13, 644043, Russia; Omsk State Polytechnic University, Mira 11, 644050, Russia}
\email{a.zubkov@yahoo.com}
\maketitle

\section*{Introduction}

The objective of the paper is to establish the linkage principle for orthosymplectic supergroups over fields of positive characteristics. The linkage principle for general linear supergroups over fields of positive characteristic was established in \cite{marzub}. In both cases the approach uses Frobenius thickenings and follows the fundamental idea of 
Doty \cite{sdoty}.
For an introduction to modular representations of orthosymplectic supergroups consult \cite{shuw}.

Linkage for supergroups over a field of characteristic zero reduces to the linkage principle for integral weights of the corresponding Lie superalgebras.
Linkage principle for various Lie superalgebras was discussed extensively in \cite{chengwang}. 

In Section 1 we fix notation regarding orthosymplectic supergroups and their superalgebras of distributions. Section 2 is devoted to the investigation of flags and the corresponding adjacent Borel supersubgroups of ortho-symplectic supergroups $SpO(2m|2n+1)$ and $SpO(2m|2n)$.
In Sections 4, 5, 6 and 7 we describe the representation theory of $SpO(2|1)$ and of all its Frobenius thickenings. More specifically, in Section 4 we describe the basis of the coordinate superalgebra, and induced and simple supermodules. In Section 5 we describe morphisms $\psi$ between induced supermodules. In Section 6 we describe simple composition factors of induced $SL(2)$-modules and, based on that, we describe simple composition factors of induced $SpO(2|1)$-supermodules. Also, we determine simple composition factors of the image, kernel and cokernel of the morphisms $\psi$.
In Section 8 we derive strong linkage for the Frobenius thickening $G_rT$ of the orthosymplectic supergroups $SpO(2m|2n+1)$ and $SpO(2m|2n)$. Finally, in Section 9 we derive the strong linkage for the orthosymplectic supergroups $SpO(2m|2n+1)$ and $SpO(2m|2n)$. 

\section{Notations}
\subsection{Ortho-symplectic supergroups}

A matrix $g=\left( \begin{array}{cc}
A & B \\
C & D
\end{array}\right)$ written in a $(k|l)$-block form represents an element $g$ of $SL(k|l)$ if and only if $Ber(g)=det(A-BD^{-1}C)det(D^{-1})=1$.

Recall the definition of and basic information about ortho-symplectic supergroups $G=SpO(2n|l)$ from \cite{shuw}. 
The ortho-symplectic supergroups are of two types $SpO(2n|2m+1)$ and $SpO(2n|2m)$.

We will consider the case $l=2m+1$ first. 
Let $V$ be a standard $SpO(2n|2m+1)$-supermodule that has a basis
\[v_1, \ldots, v_n; v_{-1}, \ldots, v_{-n}; w_{\overline{1}}, \ldots , w_{\overline{m}}; w_{-\overline{1}}, \ldots, w_{-\overline{m}}; w_{\overline{0}},\] where the parities are given as $|v_i|=1$ for $i\in\{\pm 1,\ldots , \pm n\}$ and $|w_j|=0$ for $j\in \{\pm \overline{1}, \ldots , \pm\overline{m}\}\cup\{\overline{0}\}$.
Let $I$ and $-I$ denote the sets $\{1, \ldots, n\}$ and $\{-1, \ldots, -n\}$, respectively. Similarily, let $J$ and $-J$ denote the sets $\{\overline{1}, \ldots, \overline{m}\}$ and $\{-\overline{1}, \ldots, -\overline{m}\}$, respectively. 

An even supersymmetric bilinear form $(.,.)$ on $V$ is defined by
$(v_i, v_{-i})=-(v_{-i}, v_i)=1$ for $1\leq i\leq n$,
$(w_{\overline{j}}, w_{-\overline{j}})=(w_{-\overline{j}}, w_{\overline{j}})=1$ for $1\leq l\leq m$,
$(w_{\overline{0}}, w_{\overline{0}})=1$,
and the form on the remaining pairs of generators vanishes. 

The matrix $J$ of this form equals 
\[\left( \begin{array}{ccccc}
0 & E_n & 0 & 0 & 0 \\
-E_n & 0 & 0 & 0 & 0 \\
0 & 0 & 0 & E_m & 0 \\
0 & 0 & E_m & 0 & 0 \\
0 & 0 & 0 & 0 & 1 
\end{array}\right),\]
where $E_n$ and $E_m$ are identity matrices of sizes $n$ and $m$, respectively.

Write a matrix $g$ in a $(2n|2m+1)$-block form and a $(n|n|m|m|1)$-block form as
\[g=\left( \begin{array}{cc}
A & B \\
C & D
\end{array}\right)=\left( \begin{array}{ccccc}
a_{I I} & a_{I,-I} & b_{I, J} & b_{I, -J} & b_{I,\overline{0}} \\
a_{-I, I} & a_{-I,-I} & b_{-I, J} & b_{-I, -J} & b_{-I,\overline{0}} \\
c_{J, I} & c_{J, -I} & d_{J, J} & d_{J, -J} & d_{J, \overline{0}} \\
c_{-J, I} & c_{-J, -I} & d_{-J, J} & d_{-J, -J} & d_{-J, \overline{0}} \\
c_{\overline{0}, I} & c_{\overline{0},-I} & d_{\overline{0}, J} & d_{\overline{0}, -J} & d_{\overline{0},\overline{0}} 
\end{array}\right),\]
where a block $x_{U, W}$ is equal to $(g_{u w})_{u\in U, w\in W}$, where $x\in\{a, b, c, d\}$ and $U, W\in\{\pm I, \pm J, \overline{0}\}$.

Denote the top diagonal block of $J$ of size $2n\times 2n$ by $J_s$ and the bottom diagonal block of size $2m+1\times 2m+1$ by $J_o$.
Then, for a superalgebra $A\in\mathsf{SAlg}_K$ over a base field $K$, the matrix $g\in SL(2n|2m+1)(A)$ belongs to $G(A)$ if and only if $g^{st}Jg=J$, where
\[g^{st}=\left( \begin{array}{cc}
A^t & C^t \\
-B^t & D^t
\end{array}\right).\]
Equivalently, the matrix $g\in G(A)$ if and only if 
\[\  A^tJ_s A+C^tJ_o C=J_s, \ A^t J_s B +C^t J_o D=0,\ -B^t J_s B +D^t J_o D=J_o \]
and  $det(A-BD^{-1}C)det(D^{-1})=1$.
We omit the relation $-B^t J_s A +D^t J_o C = 0$ because it follows from $A^t J_s B +C^t J_o D=0$.

The Lie superalgebra $Lie(G)=spo(2n|2m+1)$ consists of all $x\in gl(2n|2m+1)$ of the form 
\[
\left( \begin{array}{ccccc}
a_{I I} & a_{I,-I} & c_{-J, -I}^t & c_{J -I}^t & c_{\overline{0},-I}^t \\
a_{-I,I} & -a_{I I}^t & -c_{-J, I}^t & -c_{J I}^t & -c_{\overline{0}, I}^t \\
c_{J I} & c_{J, -I} & d_{J J} & d_{J, -J} & -d_{\overline{0}, -J}^t \\
c_{-J, I} & c_{-J, -I} & d_{-J, J} & -d_{J J}^t & -d_{\overline{0}, J}^t \\
c_{\overline{0}, I} & c_{\overline{0},-I} & d_{\overline{0}, J} & d_{\overline{0}, -J} & 0 
\end{array}\right)\]
with $a_{I,-I}, a_{-I, I}$ being symmetric and $d_{J, -J}, d_{-J, J}$ being skew-symmetric.

If $l=2m$, then 
\[J=\left( \begin{array}{cccc}
0 & E_n & 0 & 0  \\
-E_n & 0 & 0 & 0  \\
0 & 0 & 0 & E_m  \\
0 & 0 & E_m & 0   
\end{array}\right)\]
and $g\in SL(2n|2m)(A)$ belongs to $G(A)$ if and only if $g^{st}Jg=J$.
The Lie superalgebra $spo(2n|2m)$ consists of the same matrices as above with the last row and column deleted. 

Denote by $E_{ij}$, where the indices $i$ and $j$ run over the set $\{\pm 1,\ldots , \pm n\}\cup \{\pm\overline{1}, \ldots , \pm\overline{m}\}\cup\{\overline{0}\}$, the square matrix
whose only nonzero entry is $1$ and it appears at the row corresponding to the index $i$ and the column corresponding to the index $j$. 

The maximal torus $T$ of $SpO(2n|2m+1)$ consists of all matrices
\[\left( \begin{array}{ccccc}
a_{I I} & 0 & 0 & 0 & 0 \\
0 & a_{I I}^{-1} & 0 & 0 & 0 \\
0 & 0 & d_{J J} & 0 & 0 \\
0 & 0 & 0 & d_{J J}^{-1} & 0 \\
0 & 0 & 0 & 0 & 1 
\end{array}\right),\]
where $a_{I I}$ and $d_{J J}$ are diagonal matrices.
The maximal torus of $SpO(2n|2m)$ can be obtained by deleting of last row and column.

\subsection{Superalgebras of distributions}

Let $G$ be an algebraic supergroup. We recall the definition of its superalgebra of distributions $Dist(G)$.

As a superspace, $\mathsf{Dist}(G)$ coincides with $\bigcup_{k\geq 0} Dist_k(G)\subseteq K[G]^*$, where 
$Dist_k(G)=(K[G]/\mathfrak{m}^{k+1})^*$ and $\mathfrak{m}=\ker\epsilon_{K[G]}=K[G]^+$. 
Moreover, $Dist(G)$ is a supercocommutative Hopf superalgebra with a bijective antipode (see \cite{jan, shuw, zub2} for more details). For example, the multiplication of $Dist(G)$ is defined by
\[(\phi\psi)(f)=\sum(-1)^{|\psi||f_1|}\phi(f_1)\psi(f_2),\]
where $\Delta_{K[G]}(f)=\sum f_1\otimes f_2, f\in K[G]$ and $\phi, \psi\in Dist(G)$. 

Since the category of (left) $G$-supermodules coincides with the category of (right) $K[G]$-supercomodules, every $G$-supermodule $V$ has a natural structure of a (left) $Dist(G)$-supermodule given by \[\phi v=\sum(-1)^{|\phi||v_1|} v_1\phi(f_2),\]
where $v\mapsto \sum v_1\otimes f_2$ is the supercomodule map $V\to V\otimes K[G]$ and $\phi\in Dist(G)$. The corresponding functor from the category of $G$-supermodules to the category of $Dist(G)$-supermodules is an equivalence, provided $G$ is connected (see Lemma 9.4 and Lemma 9.5 from \cite{zub2}).

\subsection{Linkage}

Let $G$ be an algebraic (super)group. Two irreducible $G$-(super)modules $L$ and $L'$ are linked if there is a chain of irreducible (super)modules $L=L_0, \ldots , L_s =L'$ such that for each $1\leq i\leq s$ either $Ext^1_G (L_i, L_{i-1})\neq 0$ or $Ext^1_G (L_{i-1}, L_i)\neq 0$. 
A block $\mathscr{B}$ of $G$ consists of all irreducible (super)modules that are linked. 

\section{Adjacent Borel subsupergroups}

\subsection{G=SpO(2n$|$2m+1)}

In this section we will consider the case $G=SpO(2n|2m+1)$, where $m\geq 1$ and $n\geq 0$. Denote by $L$ the set of labels 
\[\{1, \ldots, n\}\cup \{-1, \ldots, -n\}\cup\{\overline{1}, \ldots, \overline{m}\}\cup \{-\overline{1}, \ldots, -\overline{m}\}\cup \{\overline{0}\}.\]
The action of the maximal torus $T$ of $G$ is given by
\[tv_{\pm i}=t_{i}^{\pm 1}v_{\pm i}, tw_{\pm\overline{j}}=t_{\overline{j}}^{\pm 1}w_{\pm\overline{j}} \text{ and } tw_{\overline{0}}=w_{\overline{0}}\]
for $t\in T, 1\leq i\leq n$ and $1\leq j\leq m$.

The root system $\Phi$ of the Lie superalgebra $\mathfrak{g}=Lie(G)$ consists of its even and odd parts
\[\Phi=\Phi_0\cup\Phi_1=\{\pm\delta_i\pm\delta_{i'}, \pm 2\delta_i, \pm\epsilon_{\overline{j}}\pm\epsilon_{\overline{j}'}, \pm\epsilon_{\overline{j}}\}\cup\{\pm\delta_i\pm\epsilon_{\overline{j}}, \pm\delta_i\},\]
where $1\leq i\neq i'\leq n$ and $1\leq j\neq j'\leq m$.

Then $\mathfrak{g}=Lie(T)\oplus (\oplus_{\alpha\in\Phi}\mathfrak{g}_{\alpha})$, where generators $e_{\alpha}$ of $\mathfrak{g}_{\alpha}$ are given as follows.
\[e_{\delta_i+\delta_{i'}}=E_{i, -i'}+E_{i', -i}, \ e_{-\delta_i-\delta_{i'}}=E_{-i', i}+E_{-i, i'};
\]
\[e_{\delta_i-\delta_{i'}}=E_{i, i'}-E_{-i', -i}, \ e_{\epsilon_{\overline{j}} \ -\epsilon_{\overline{j}'}}=E_{\overline{j}, \overline{j}'}-E_{-\overline{j}', -\overline{j}};\]
\[e_{\epsilon_{\overline{j}} \ +\epsilon_{\overline{j}'}}=E_{\overline{j}, -\overline{j}'}-E_{\overline{j}', -\overline{j}}, \ e_{-\epsilon_{\overline{j}} \ -\epsilon_{\overline{j}'}}=E_{-\overline{j}, \overline{j}'}-E_{-\overline{j}', \overline{j}};\]
\[e_{2\delta_i}=E_{i, -i}, \ e_{-2\delta_i}=E_{-i, i};\]
\[e_{\epsilon_{\overline{j}}}=E_{\overline{0}, -\overline{j}}-E_{\overline{j}, \overline{0}}, \ e_{-\epsilon_{\overline{j}}}=E_{\overline{0}, \overline{j}}-E_{-\overline{j}, \overline{0}};\]
\[e_{\delta_i+\epsilon_{\overline{j}}}=E_{i, -\overline{j}} \ +E_{\overline{j}, -i}, \ e_{-\delta_i-\epsilon_{\overline{j}}}=E_{-\overline{j}, i}-E_{-i, \overline{j}};
\]
\[e_{\delta_i-\epsilon_{\overline{j}}}=E_{i \overline{j}} + E_{-\overline{j}, -i}, \ e_{-\delta_i+\epsilon_{\overline{j}}}=E_{\overline{j} i}-E_{-i, -\overline{j}};\]
\[e_{\delta_i}=E_{i, 0}+E_{0, -i}, \ e_{-\delta_i}=E_{0, i}-E_{-i, 0}.\]
Let $y_{\alpha}$ denote a vector of a Chevalley basis of $\mathfrak{g}$ that belongs to $\mathfrak{g}_{\alpha}$ for $\alpha\in \Phi$ (cf. \cite{shuw}). The parity of a weight (or root) $\alpha$ is denoted by $p(\alpha)$.

From now we use the notation $x_{\pm i}=v_{\pm i}$ for $1\leq i\leq n$ and $x_{\pm\overline{j}}=w_{\pm\overline{j}}$ for $1\leq j\leq m$. 
We also use the notation $\pi_{\pm i}=\pm\delta_i$ for $1\leq i\leq n$ and $\pi_{\pm\overline{j}}=\pm\epsilon_j$ for $1\leq j\leq m$.

Let $A$ be a subset of $L\setminus\{\overline{0}\}$ such exactly one of each indices $\{i,-i\}$ and exactly one of each indices $\{\overline{j}, -\overline{j}\}$ belongs to $A$. 
Put the elements of $A$ in some order, say $a_1, a_2, \ldots, a_{n+m}$.
Then the flag 
\[V_1(A)\subseteq V_2(A)\subseteq\ldots \subseteq V_{m+n}(A),\]
where $V_i(A)=\sum_{1\leq s\leq i} Kx_{a_s}$ for $1\leq i\leq m+n$, is a maximal isotropic flag in $V$ that will be denoted by $<a_1, a_2, \ldots , a_{m+n}>$.

Up to the conjugation by an element from $G(K)$, each Borel supersubgroup of $G$ coincides with $Stab_G(\mathcal{F})$ for a maximal isotropic flag $\mathcal{F}$ as above.
The Borel supersubgroup $B$ defined by the flag $\mathcal{F}=<a_1, \ldots, a_{m+n}>$ will be denoted by $B=B^+(\mathcal{F})$. The opposite Borel supersubgroup $B^-(\mathcal{F})$ coincides with $B^+(-\mathcal{F})$, where $-\mathcal{F}=<-a_1, \ldots , -a_{m+n}>$. If we denote by $\Phi^+(\mathcal{F})$ the subset of weights of $B^+(\mathcal{F})$, 
then $\Phi=\Phi^+(\mathcal{F})\sqcup\Phi^-(\mathcal{F})$, where $\Phi^-(\mathcal{F})=\Phi^+(-\mathcal{F})=-\Phi^+(\mathcal{F})$.

For example, the {\it standard positive} Borel supersubgroup $B^+$, whose weights form a positive part
\[\Phi^+=\{\delta_i\pm\delta_{i'}, i < i'; 2\delta_i; \epsilon_{\overline{j}}\pm\epsilon_{\overline{j}'}, j < j'; \epsilon_{\overline{j}}\}\cup\{\delta_i\pm\epsilon_{\overline{j}}; \delta_i\}\]
of $\Phi$, corresponds to the flag $\mathcal{F}^+=<1, \ldots , n, \overline{1}, \ldots , \overline{m}>$. 

Symmetrically, the {\it standard negative} Borel supersubgroup $B^-$ corresponds to the flag
$\mathcal{F}^- =<-1, \ldots, -n, -\overline{1}, \ldots, -\overline{m}>$. The weights of $B^-$ form a negative part
$\Phi^-=-\Phi^+$ of $\Phi$.

Fix a maximal isotropic flag $\mathcal{F}=<a_1, \ldots, a_{m+n}>$. A {\it subflag} $\mathcal{G}$ of $\mathcal{F}$ has a form
\[V_{k_1}(A)\subseteq V_{k_2}(A)\subseteq\ldots\subseteq V_{k_l}(A),
\] where $1\leq k_1 < k_2 <\ldots < k_l\leq m+n$. The supersubgroup $Stab_G(\mathcal{G})$, called a {\it parabolic} supersubgroup of $G$, corresponding to the 
subflag $\mathcal{G}$ of $\mathcal{F}$, will be denoted by $P(\mathcal{G})$.

If $\mathcal{F}$ is fixed, then $\mathcal{G}$ uniquely corresponds to the ordered listing $(a_{k_1}, \ldots, a_{k_l})$. 
In this case we use the notation $<a_{k_1}, \ldots, a_{k_l}>$ for $\mathcal{G}$.
Define the odd and even parts of the flag $\mathcal{G}$ by $\mathcal{G}_1=\mathcal{G}\cap\{\pm 1, \ldots, \pm n\}$ and $\mathcal{G}_0=\mathcal{G}\cap\{\pm\overline{1}, \ldots, \pm\overline{m}\}$, respectively. The orderings of $\mathcal{G}_1$ and $\mathcal{G}_0$ are naturally induced by the ordering of $\mathcal{G}$.

The largest even (super)subgroup $G_{ev}$ of $G$ is isomorphic to $Sp(2n)\times SO(2m+1)$, where the first (second) factor consists of all $g\in G_{ev}$ 
acting trivially on $V_0$ (on $V_1$, respectively). Moreover, $B^+(\mathcal{F})_{ev}=B^+(\mathcal{F}_1)\times B^+(\mathcal{F}_0)$, where
$B^+(\mathcal{F}_1)=Stab_{Sp(2n)}(\mathcal{F}_1)$ and $B^+(\mathcal{F}_0)=Stab_{SO(2m+1)}(\mathcal{F}_0)$ are Borel subgroups of $Sp(2n)$ and $SO(2m+1)$ correspondingly.

For any subflag $\mathcal{G}$ of a fixed $\mathcal{F}$, denote by $Cent_G(\mathcal{G})$ the supersubgroup of $P(\mathcal{G})$ consisting of all elements $g$ 
acting trivially on $V_{k_i}(A)/V_{k_{i-1}}(A)$ for each $1\leq i\leq r$. It is clear that $Cent_G(\mathcal{G})\unlhd P(\mathcal{G})$.

Since $Cent_G(\mathcal{F})_{ev}=Cent_{Sp(2n)}(\mathcal{F}_1)\times Cent_{SO(2m+1)}(\mathcal{F}_0)$ is the unipotent radical of the Borel subgroup
$B^+(\mathcal{F})_{ev}$, the supergroup $Cent_G(\mathcal{F})$ is unipotent (cf. \cite{zubul, amas1}). 
It is easy to see that $B^+(\mathcal{F})=T Cent_G(\mathcal{F})$, hence $Cent_G(\mathcal{F})$ is the unipotent radical of $B^+(\mathcal{F})$ and $B^+(\mathcal{F})=T\ltimes Cent_G(\mathcal{F})$. In what follows we denote $Cent_G(\mathcal{F})$ by $U^+(\mathcal{F})$. Symmetrically, $B^-(\mathcal{F})=T\ltimes U^-(\mathcal{F})$, where $U^-(\mathcal{F})=U^+(-\mathcal{F})$.

Let $\mathcal{F}=<a_1, \ldots, a_{m+n}>$ be a maximal isotropic flag and write $\mathcal{F}_1=<b_1, \ldots , b_n>$ and $\mathcal{F}_0=<c_1, \ldots , c_m>$. 
Fix an index $1\leq s < m+n$ and denote by $\mathcal{G}$ the subflag $<a_1, \ldots, a_{s-1}, a_{s+2}, \ldots, a_{m+n}>$ of $\mathcal{F}$ and by 
$\mathcal{F}'$ the flag
\[<a_1, \ldots, a_{s-1}, a_{s+1}, a_s, \ldots, a_{m+n}>.\]

\begin{lm}\label{BorelsandParabolics1}
The subset $\Phi^+(\mathcal{F})$ consists of even elements 
$\pi_{b_i}\pm\pi_{b_{i'}}$ for $i<i'$, $2\pi_{b_i}$, $\pi_{c_j}\pm\pi_{c_{j'}}$ for $j<j'$ and $\pi_{c_j}$, and 
odd elements $\pi_{b_i}+\pi_{c_j}$, $\pi_{b_i}$ and $\pm (\pi_{b_i}-\pi_{c_j})$,
where the element $\pm (\pi_{b_i}-\pi_{c_j})$ has sign $+$ if and only if $b_i$ appears before $c_j$ in $\mathcal{F}$.
Moreover, $\Phi^+(\mathcal{F}')=\Phi^+(\mathcal{F})\setminus\{\pi_{a_s}-\pi_{a_{s+1}}\}\cup\{\pi_{a_{s+1}}-\pi_{a_s}\}$ and $P(\mathcal{G})$ is the minimal parabolic supersubgroup containing both $B^+(\mathcal{F})$ and $B^+(\mathcal{F}')$.
\end{lm}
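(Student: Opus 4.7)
The plan is to recognise $\alpha\in\Phi^+(\mathcal F)$ as the condition that the root vector $e_\alpha$ preserves the flag $\mathcal F$, i.e.\ $e_\alpha V_k(A)\subseteq V_k(A)$ for every $k$, and to verify the three assertions by case analysis on the explicit formulas for $e_\alpha$ given just above the lemma. Each $e_\alpha$ is a sum of one or two matrix units $E_{ij}$, and $E_{ij}x_{a_k}$ is either zero (when $j\notin A$) or a scalar multiple of $x_i$, so preservation reduces to the combinatorial condition: for each summand $E_{ij}$ with $j\in A$, also $i\in A$ and the position of $i$ in $\mathcal F$ is at most that of $j$.

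For assertion (1) I would enumerate root families. Writing each root in the form $\pi_{b_i}\pm\pi_{b_{i'}}$, $2\pi_{b_i}$, $\pi_{c_j}\pm\pi_{c_{j'}}$, $\pi_{c_j}$, $\pi_{b_i}+\pi_{c_j}$, $\pm(\pi_{b_i}-\pi_{c_j})$, or $\pi_{b_i}$, I check whether the corresponding $e_\alpha$ preserves $\mathcal F$ using the criterion above. For the ``pure'' families $2\pi_{b_i}$, $\pi_{c_j}$, $\pi_{b_i}+\pi_{c_j}$ and $\pi_{b_i}$, every matrix-unit summand of $e_\alpha$ has its source index outside $A$ (after resolving the signs via $\pi_{\pm i}=\pm\delta_i$, $\pi_{\pm\overline j}=\pm\epsilon_{\overline j}$), so $e_\alpha$ is automatically in $Lie(B^+(\mathcal F))$. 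For $\pi_{b_i}\pm\pi_{b_{i'}}$ and $\pi_{c_j}\pm\pi_{c_{j'}}$ with $i<i'$ (resp.\ $j<j'$), the effective matrix unit sends its source in $A$ to a label at an earlier flag position, so both signs lie in $\Phi^+(\mathcal F)$. Only in the mixed odd family $\pm(\pi_{b_i}-\pi_{c_j})$ do both matrix units of $e_\alpha$ have sources in $A$ at once; the ordering check then forces exactly one sign to work, and it is $+$ precisely when $b_i$ precedes $c_j$ in $\mathcal F$.

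For assertion (2) I apply (1) to both $\mathcal F$ and $\mathcal F'$. Since $\mathcal F$ and $\mathcal F'$ differ only by swapping the adjacent pair $\{a_s,a_{s+1}\}$, every pairwise ordering of labels except this one is preserved, so by (1) the only root whose positivity flips is the one attached to that pair: $\pi_{a_s}-\pi_{a_{s+1}}$ leaves $\Phi^+(\mathcal F)$ and $\pi_{a_{s+1}}-\pi_{a_s}$ joins. For assertion (3), $\mathcal G$ is a subflag of both $\mathcal F$ and $\mathcal F'$, so $P(\mathcal G)\supseteq B^+(\mathcal F),\,B^+(\mathcal F')$. For minimality, the Lie superalgebra of any parabolic containing both Borels must contain $T$ together with root vectors for every element of $\Phi^+(\mathcal F)\cup\{\pi_{a_{s+1}}-\pi_{a_s}\}$; comparing this set to the root system computed by applying (1) to a maximal refinement of $\mathcal G$ shows this is precisely $Lie(P(\mathcal G))$.

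The main obstacle is the bookkeeping in (1): the orthosymplectic root system has many families, and each splits into sub-cases depending on which of $\{i,-i\}$ and $\{\overline j,-\overline j\}$ lies in $A$. I would therefore fix a root type at a time, write $b_i$ and $c_j$ with their explicit signs, and run over the four possible sign combinations, carefully tracking which matrix unit is effective on the flag and comparing positions, so as to avoid sign slips.
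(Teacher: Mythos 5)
Your proposal is correct and takes essentially the same route as the paper: the paper identifies $\Phi^+(\mathcal{F})$ through $Lie(Stab_G(\mathcal{F}))=Stab_{\mathfrak{g}}(\mathcal{F})$ (Lemma 1.2 of the cited work of Ulayshev and Zubkov) together with the observation that $y_{\alpha}x_i\neq 0$ if and only if $\alpha+\pi_i$ is again a weight of a flag basis vector, which is exactly your matrix-unit/position criterion, and it likewise treats the statements about $\Phi^+(\mathcal{F}')$ and the minimality of $P(\mathcal{G})$ as immediate consequences. The only step you leave implicit is this group-to-Lie-algebra reduction $Lie(B^+(\mathcal{F}))=Stab_{\mathfrak{g}}(\mathcal{F})$, which the paper disposes of by that citation.
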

\begin{proof}
The Lie superalgebra $Lie(P)$ of a supersubgroup $P$ of $G$, containing the torus $T$, has a decomposition $Lie(P)=Lie(T)\oplus (\oplus_{\alpha\in M} \mathfrak{g}_{\alpha})$ 
for a suitable subset $M\subseteq\Phi$.
Using $Lie(Stab_G(\mathcal{F}))=Stab_{\mathfrak{g}}(\mathcal{F})$
 (see Lemma 1.2, \cite{zubul}) together with the observation that $y_{\alpha}x_{i}\neq 0$ if and only if
$\alpha+\pi_{i}\in \{\pi_j |j\in I\}$ we derive the first statement. The remaining statements are obvious.
\end{proof}

For every root $\alpha\in\Phi\setminus\{\pm\epsilon_{\overline{j}}\}$ denote by $U_{\alpha}$ a one-dimensional (even or odd) unipotent supersubgroup of $G$ such that 
\[U_{\alpha}(C)=\{\exp(c y_{\alpha})=E+c y_{\alpha}| c\in C, |c|=p(\alpha)\}\]
for each $C\in\mathsf{SAlg}_K$.
For $\alpha$ in $\{\pm\epsilon_{\overline{j}}\}$ denote 
\[U_{\alpha}(C)=\{\exp(c y_{\alpha})=E+c y_{\alpha}+\frac{1}{2}(c y_{\alpha})^2\mid c\in C_0\}.\]
\begin{lm}\label{Levi1}
The unipotent radical $UP(\mathcal{G})$ of $P(\mathcal{G})$ coincides with $Cent_{G}(\mathcal{G})=U^+(\mathcal{F})\cap U^+(\mathcal{F}')$. 

Additionally, $P(\mathcal{G})=L\ltimes UP(\mathcal{G})$, where $L$ is isomorphic to $GL(2)\times T'$ if $\pi_{a_s}-\pi_{a_{s+1}}$ is an even root, and it is isomporphic to 
$GL(1|1)\times T'$ if $\pi_{a_s}-\pi_{a_{s+1}}$ is an odd root. Here $T'$ consists of all $t\in T$ acting trivially on $V_{s+1}(A)/V_{s-1}(A)$.
\end{lm}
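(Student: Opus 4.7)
The plan is to reduce both statements to root-subgroup bookkeeping, using the Chevalley generators $y_\alpha$ together with the description of $\Phi^+(\mathcal{F})$ from Lemma \ref{BorelsandParabolics1}.

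First I would identify $Cent_G(\mathcal{G})$ with the closed subsupergroup generated by the root subgroups $U_\alpha$ for $\alpha \in \Phi^+(\mathcal{F}) \setminus \{\pi_{a_s} - \pi_{a_{s+1}}\}$. For any such $\alpha$, the generator $y_\alpha$ sends each flag vector $x_{a_i}$ either to zero, to a vector strictly earlier in $\mathcal{F}$, or to a vector outside the maximal isotropic subspace; the condition to act as the identity on every quotient $V_{k_i}(A)/V_{k_{i-1}}(A)$ of $\mathcal{G}$ then excludes exactly the root $\pi_{a_s} - \pi_{a_{s+1}}$, whose root subgroup sends $x_{a_{s+1}}$ into $K x_{a_s}$, and these two vectors lie in the single graded piece $V_{s+1}(A)/V_{s-1}(A)$ of $\mathcal{G}$. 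Combined with the identity $\Phi^+(\mathcal{F}') = \Phi^+(\mathcal{F}) \setminus \{\pi_{a_s}-\pi_{a_{s+1}}\} \cup \{\pi_{a_{s+1}}-\pi_{a_s}\}$ from the previous lemma, this gives $Cent_G(\mathcal{G}) = U^+(\mathcal{F}) \cap U^+(\mathcal{F}')$ at once.

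To see that $Cent_G(\mathcal{G})$ is in fact the unipotent radical $UP(\mathcal{G})$, normality was already noted in the text, and unipotency follows from the decomposition $Cent_G(\mathcal{G})_{ev} = Cent_{Sp(2n)}(\mathcal{G}_1) \times Cent_{SO(2m+1)}(\mathcal{G}_0)$, each factor being the classical unipotent radical of the corresponding even parabolic, together with the supergroup criterion of \cite{zubul, amas1}. Maximality will then follow from the Levi decomposition constructed next. For that, I would take $L$ to be the subsupergroup generated by $T$ and the two root subgroups $U_{\pm(\pi_{a_s}-\pi_{a_{s+1}})}$. The torus $T'$ of elements acting trivially on $V_{s+1}(A)/V_{s-1}(A)$ has codimension one in $T$, and the complementary rank-one torus combined with $U_{\pm(\pi_{a_s}-\pi_{a_{s+1}})}$ acts on the span of $x_{a_s}$ and $x_{a_{s+1}}$ as $GL(2)$ when both vectors share the same parity (i.e.\ when $\pi_{a_s}-\pi_{a_{s+1}}$ is even) and as $GL(1|1)$ when they differ in parity. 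Triviality of $L \cap UP(\mathcal{G})$ is immediate from the root decomposition, and $P(\mathcal{G}) = L \cdot UP(\mathcal{G})$ follows by a root/dimension count matching $Lie(P(\mathcal{G}))$ against $Lie(L) + Lie(UP(\mathcal{G}))$ and lifting via the equivalence between connected supergroups and their distribution superalgebras.

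The main obstacle I anticipate is verifying that no additional relation on the two-dimensional quotient is imposed by the isometry condition $g^{st} J g = J$. Since the span of $x_{a_s}, x_{a_{s+1}}$ sits inside the maximal isotropic $V_{m+n}(A)$, the form restricts to zero there, so the constraints only couple the action of $L$ on this quotient with its action on the dual piece spanned by $x_{-a_s}, x_{-a_{s+1}}$, determining the latter from the former without imposing any internal relation. This leaves precisely one free $GL(2)$ or $GL(1|1)$ factor of the claimed shape, which in turn confirms the minimality of $UP(\mathcal{G})$ and completes the decomposition.
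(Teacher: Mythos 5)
Two steps in your argument do not hold as written. First, the identification of the Levi factor is numerically off: the subtorus $T'$ of elements acting trivially on $V_{s+1}(A)/V_{s-1}(A)$ is cut out by the two independent characters $\pi_{a_s}$ and $\pi_{a_{s+1}}$, so it has codimension \emph{two} in $T$, not one. Consequently a ``complementary rank-one torus'' together with $U_{\pm(\pi_{a_s}-\pi_{a_{s+1}})}$ cannot produce $GL(2)$ (dimension $4$) or $GL(1|1)$ (dimension $2|2$); you need the full rank-two complement of $T'$, which is why the paper takes $L$ to be the closure of the subfunctor $C\mapsto U_{-\alpha}(C)T(C)U_{\alpha}(C)$, keeping all of $T$ inside $L$. (A smaller slip: for $\alpha\in\Phi^+(\mathcal{F})$ the vector $y_\alpha x_{a_i}$ is never ``outside the maximal isotropic subspace'' --- such a root subgroup would not even stabilize $\mathcal{F}$ --- though this does not affect your conclusion that only $\pi_{a_s}-\pi_{a_{s+1}}$ violates centrality.)

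Second, and more seriously, the factorization $P(\mathcal{G})=L\cdot UP(\mathcal{G})$ is justified by ``matching $Lie(P(\mathcal{G}))$ against $Lie(L)+Lie(UP(\mathcal{G}))$ and lifting via the equivalence between connected supergroups and their distribution superalgebras.'' In characteristic $p$ an equality at the level of Lie superalgebras does not determine a group-level factorization (already the Frobenius kernel $G_r$ has the same Lie superalgebra as $G$), and the equivalence invoked in the paper (Lemmas 9.4--9.5 of \cite{zub2}) is an equivalence of supermodule categories for a connected supergroup; it is not a device for converting a Lie-algebra decomposition into an equality of subsupergroups. To push your route through you would need the stronger statement $Dist(P(\mathcal{G}))=Dist(L)\,Dist(UP(\mathcal{G}))$, involving all divided powers --- equivalently, that $P(\mathcal{G})$ is generated by $T$ together with its root supersubgroups --- and that is exactly the point your sketch leaves unproved; the same caveat applies to deducing $Cent_G(\mathcal{G})=U^+(\mathcal{F})\cap U^+(\mathcal{F}')$ purely from root bookkeeping. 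The paper sidesteps the characteristic-$p$ issue entirely: the natural morphism $P(\mathcal{G})\to GL(V_{s+1}(A)/V_{s-1}(A))$ maps $L$ onto its target, so every $p\in P(\mathcal{G})$ can be corrected by an element of $L$ so as to act trivially on $V_{s+1}(A)/V_{s-1}(A)$, hence lands in $B^+(\mathcal{F})\cap B^+(\mathcal{F}')$ and, after a further correction by $T'\subseteq L$, in $Cent_G(\mathcal{G})$; this gives $P(\mathcal{G})=L\,Cent_G(\mathcal{G})$ directly, and the semidirect product statement then follows since $L$ has trivial unipotent radical. You would either need to adopt this quotient-map argument or supply the missing generation statement for $Dist(P(\mathcal{G}))$.
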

\begin{proof}
Set $\alpha=\pi_{a_s}-\pi_{a_{s+1}}$. It is easy to see that the Zariski closure of the subfunctor $G_{\alpha}$ of $G$ given by $C\mapsto G_{\alpha}(C)=U_{-\alpha}(C)T(C)U_{\alpha}(C)$ is a supersubgroup of $P(\mathcal{G})$. 
Moreover, $\overline{G_{\alpha}}\simeq GL(2)\times T'$ if $p(\alpha)=0$ and $\overline{G_{\alpha}}\simeq GL(1|1)\times T'$ if $p(\alpha)=1$. 

Let $L=\overline{G_{\alpha}}$. The natural supergroup morphism $P(\mathcal{G})\to GL(V_{s+1}(A)/V_{s-1}(A))$ maps $L$ onto $GL(V_{s+1}(A)/V_{s-1}(A))$, hence $P(\mathcal{G})=L Cent_G(\mathcal{G})$. Since the unipotent radical of $L$ is trivial, all statements of the lemma follows.
\end{proof}
Let $\mathcal{F}=<a_1, \ldots , a_{m+n}>$ be a maximal isotropic flag such that $a_{m+n}\in\{\pm 1, \ldots , \pm n\}$. Denote by $\mathcal{G}$ 
the subflag $<a_1, \ldots , a_{m+n-1}>$ of $\mathcal{F}$ and by $\mathcal{F}'$ the flag $<a_1, \ldots, a_{m+n-1}, -a_{m+n}>$. 

\begin{lm}\label{BorelsandParabolics2}
There is $\Phi^+(\mathcal{F}')=\Phi^+(\mathcal{F})\setminus\{\pi_{a_{m+n}}, 2\pi_{a_{m+n}}\}\cup\{-\pi_{a_{m+n}}, -2\pi_{a_{m+n}}\}$. $P(\mathcal{G})$ is the minimal parabolic supersubgroup containing both $B^+(\mathcal{F})$ and $B^+(\mathcal{F}')$.
Additionally, $UP(\mathcal{G})=U^+(\mathcal{F})\cap U^+(\mathcal{F}')$ and $P(\mathcal{G})=H\ltimes UP(\mathcal{G})$, where $H\simeq SpO(2|1)\times T'$ and $T'$ consists of all $t\in T$acting trivially on $W= Kx_{a_{m+n}}+Kx_{-a_{m+n}}+Kx_{\overline{0}}$.
\end{lm}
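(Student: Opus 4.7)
The plan is to mirror the strategy of Lemma~\ref{Levi1}: there we handled an adjacent swap of two flag entries and obtained a Levi of type $GL(2)$ or $GL(1|1)$; here we flip the sign of the last (symplectic) entry, and the resulting Levi should be of type $SpO(2|1)$, living inside the automorphism supergroup of the three-dimensional orthosymplectic quotient $V_{m+n-1}(A)^{\perp}/V_{m+n-1}(A)$.

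The description of $\Phi^+(\mathcal{F}')$ follows by applying Lemma~\ref{BorelsandParabolics1} to both $\mathcal{F}$ and $\mathcal{F}'$ and comparing. Among the roots that involve $a_{m+n}$, those of mixed type ($\pm\pi_{b_i}\pm\pi_{a_{m+n}}$ and $\pm\pi_{c_j}\pm\pi_{a_{m+n}}$) appear in all sign combinations and are thus preserved as a set by $a_{m+n}\mapsto -a_{m+n}$, while the pure roots $\pi_{a_{m+n}}$ and $2\pi_{a_{m+n}}$ flip to their negatives. This yields the first identity. For minimality, both $\mathcal{F}$ and $\mathcal{F}'$ refine $\mathcal{G}$, so $P(\mathcal{G})\supseteq B^+(\mathcal{F})\cup B^+(\mathcal{F}')$; conversely, using $Lie(Stab_G(\mathcal{G}))=Stab_{\mathfrak{g}}(\mathcal{G})$ exactly as in the proof of Lemma~\ref{BorelsandParabolics1}, one verifies that the root set of $P(\mathcal{G})$ is precisely $\Phi^+(\mathcal{F})\cup\{-\pi_{a_{m+n}},-2\pi_{a_{m+n}}\}$, which is the smallest superset of $\Phi^+(\mathcal{F})\cup\Phi^+(\mathcal{F}')$.

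The roots positive for both $\mathcal{F}$ and $\mathcal{F}'$ are exactly $\Phi^+(\mathcal{F})\setminus\{\pi_{a_{m+n}},2\pi_{a_{m+n}}\}$, and these are precisely the roots whose associated matrices act trivially on the quotient $V_{m+n-1}(A)^{\perp}/V_{m+n-1}(A)$; hence the corresponding supersubgroup equals both $U^+(\mathcal{F})\cap U^+(\mathcal{F}')$ and $Cent_G(\mathcal{G})$, and is the unipotent radical $UP(\mathcal{G})$. For the Levi, set $\alpha=\pi_{a_{m+n}}$ and let $H_0$ be the Zariski closure of the subfunctor generated by $U_{\pm\alpha}$, $U_{\pm 2\alpha}$ together with the rank-one subtorus of $T$ acting nontrivially on $x_{a_{m+n}}$. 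The natural morphism $P(\mathcal{G})\to GL(V_{m+n-1}(A)^{\perp}/V_{m+n-1}(A))$ preserves the induced non-degenerate supersymmetric form on the three-dimensional quotient spanned by the images of $x_{a_{m+n}},x_{-a_{m+n}},x_{\overline{0}}$, so it factors through the full stabilizer $SpO(2|1)$ of that form, and it maps $H_0$ isomorphically onto this $SpO(2|1)$. Taking $H=H_0\cdot T'$ with $T'\subseteq T$ the subtorus acting trivially on $W$, and noting that $H\cap UP(\mathcal{G})$ is trivial (their Lie superalgebras intersect in $0$), yields $P(\mathcal{G})=H\ltimes UP(\mathcal{G})$ with $H\simeq SpO(2|1)\times T'$.

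The main obstacle is the Levi identification at the supergroup level. The Lie-algebraic part is straightforward: the brackets $[y_{\alpha},y_{\alpha}]=2y_{2\alpha}$ and its companions realize $spo(2|1)$ inside $\mathfrak{g}$. Lifting this to the supergroup level requires care, however, since the odd root vectors $y_{\pm\alpha}$ do not square to zero in the universal enveloping algebra, so the naive truncation of $\exp(cy_\alpha)$ is not a priori a subgroup. The cleanest way around this is the approach already employed: use the faithfulness of the action on $V_{m+n-1}(A)^{\perp}/V_{m+n-1}(A)$ to identify $H_0$ directly with the standard $SpO(2|1)$ acting on this three-dimensional orthosymplectic space, bypassing a direct exponentiation argument.
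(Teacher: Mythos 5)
Your overall route is the same as the paper's: realize the Levi as (the closure of) the subfunctor built from $U_{\pm\alpha}$, $U_{\pm 2\alpha}$ and the torus, and identify it with $SpO(2|1)$ through its action on the three-dimensional orthosymplectic space attached to $V_{m+n-1}(A)$ (the paper works with the subspace $W$ itself, using $V_{m+n-1}(A)\perp W=V_{m+n-1}(A)^{\perp}$, while you use the isomorphic quotient $V_{m+n-1}(A)^{\perp}/V_{m+n-1}(A)$). However, one of your intermediate claims is false: here $UP(\mathcal{G})$ does \emph{not} equal $Cent_G(\mathcal{G})$. With the paper's definition, $Cent_G(\mathcal{G})$ only constrains the action on the quotients $V_i(A)/V_{i-1}(A)$ for $i\leq m+n-1$, and no quotient of the subflag $\mathcal{G}$ sees $W$; since the $SpO(2|1)$-factor $L$ fixes $V_{m+n-1}(A)$ pointwise, $L\subseteq Cent_G(\mathcal{G})$, so $Cent_G(\mathcal{G})$ is not unipotent and strictly contains $U^+(\mathcal{F})\cap U^+(\mathcal{F}')$. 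This is exactly why the statement of this lemma, unlike Lemma \ref{Levi1}, omits $Cent_G(\mathcal{G})$; acting trivially on $V_{m+n-1}(A)^{\perp}/V_{m+n-1}(A)$ is not the defining condition of $Cent_G(\mathcal{G})$, so this identification cannot be used to recognize the unipotent radical.

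A second gap concerns the decomposition $P(\mathcal{G})=H\ltimes UP(\mathcal{G})$: you never argue the covering statement $P(\mathcal{G})=H\,(U^+(\mathcal{F})\cap U^+(\mathcal{F}'))$. From ``$\phi$ maps $H_0$ onto $SpO(W)$'' and ``$H\cap UP(\mathcal{G})$ is trivial'' you only get that $H\,UP(\mathcal{G})$ is a subsupergroup surjecting onto the image of $\phi$; one must still show $\ker\phi\subseteq H\,UP(\mathcal{G})$, i.e.\ that any $p\in P(\mathcal{G})$ acting trivially on the three-dimensional quotient lies in $B^+(\mathcal{F})\cap B^+(\mathcal{F}')=T\ltimes(U^+(\mathcal{F})\cap U^+(\mathcal{F}'))$. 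This is precisely the step the paper's proof supplies: $P(\mathcal{G})$ stabilizes $V_{m+n-1}(A)\perp W$, hence maps to the full form-preserving group $\widetilde{SpO}(W)$ --- which is slightly larger than $SpO(2|1)$, as it contains the sign change on $x_{\overline{0}}$, so your ``factors through the full stabilizer $SpO(2|1)$'' needs a connectedness or Berezinian remark --- and each image element is decomposed as a diagonal factor times an element of $SpO(W)$, allowing one to correct $p$ by some $g\in L$ so that $pg^{-1}\in B^+(\mathcal{F})\cap B^+(\mathcal{F}')$. Finally, ``the Lie superalgebras intersect in $0$'' does not by itself yield $H\cap UP(\mathcal{G})=1$ in characteristic $p$ (a subgroup with trivial Lie superalgebra is only \'etale, and unipotent groups do contain nontrivial \'etale subgroups, e.g.\ $\mathbb{Z}/p\subset\mathbb{G}_a$); triviality should be checked directly from the matrix descriptions. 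Your treatment of $\Phi^+(\mathcal{F}')$ and of the minimality of $P(\mathcal{G})$ is fine.
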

\begin{proof}
Set $\alpha=\pi_{a_{m+n}}$. As above, the Zariski closure of the subfunctor
\[C\mapsto G_{\alpha}(C)=
U_{-\alpha}(C)U_{-2\alpha}(C)T(C)U_{2\alpha}(C)U_{\alpha}(C)\]
is a supersubgroup of $P(\mathcal{G})$. More precisely, $\overline{G_{\alpha}}=L\times T'$, where 
\[L=\{g\in Stab_G(W)\mid g x_i=x_i, i\neq\pm a_{m+n}, i\neq\overline{0}\}\simeq SpO(2|1).\] 

Denote $\overline{G_{\alpha}}$ by $H$. For every $g\in P(\mathcal{G})$ and $i\in\{a_1, \ldots, a_{m+n-1}\}$ we have 
\[(gx_{\pm a_{m+n}}, x_i)=(x_{\pm a_{m+n}}, g^{-1}x_i)=(gx_{\overline{0}}, x_i)=(x_{\overline{0}}, g^{-1}x_i)=0.\]
In other words, $P(\mathcal{G})$ stabilizes the orthogonal sum  $V_{m+n-1}(A)\perp W$. The latter implies that there is a supergroup morphism 
$\phi : P(\mathcal{G})\to \widetilde{SpO}(W)$, where $\widetilde{SpO}(W)$ consists of all $g\in GL(W)$ preserving the induced (even) bilinear form on $W$ 
($g$ is not necessarily from $SL(W)$). Every $g\in \widetilde{SpO}(W)$ has a form $tg'$, where $t$ is a diagonal matrix from $GL(W)$ that acts trivially on all basic vectors of $W$ except $x_{\overline{0}}$ and $g'\in SpO(W)$. Therefore for every $p\in P(\mathcal{G})$ there is a $g\in L$ such that $pg^{-1}\in B^+(\mathcal{F})\cap B^+(\mathcal{F}')$, 
which implies $P(\mathcal{G})=H (U^+(\mathcal{F})\cap U^+(\mathcal{F}'))$. Since $L$ normalizes $U^+(\mathcal{F})\cap U^+(\mathcal{F}')$, the lemma follows.
\end{proof}
The proof of the next lemma is similar to the proof of Lemma \ref{BorelsandParabolics2}. Since this proof is easier, we leave it for the reader.
\begin{lm}\label{BorelsandParabolics3}
If $a_{m+n}\in\{\pm\overline{1}, \ldots, \pm\overline{m}\}$, then for $\mathcal{F}'$ and $\mathcal{G}$ as in Lemma \ref{BorelsandParabolics2}, $P(\mathcal{G})$ is the minimal parabolic supersubgroup containing both $B^+(\mathcal{F})$ and $B^+(\mathcal{F}')$. 
Additionally,
$\Phi(\mathcal{F}')=\Phi^+(\mathcal{F})\setminus\{\pi_{a_{m+n}}\}\cup\{-\pi_{a_{m+n}}\}$, 
$UP(\mathcal{G})=U^+(\mathcal{F})\cap U^+(\mathcal{F}')$,
and $P(\mathcal{G})=H\ltimes UP(\mathcal{G})$, where $H\simeq SO(3)\times T'$ and $T'$ consists of all $t\in T$ acting trivially on $W= Kx_{a_{m+n}}+Kx_{-a_{m+n}}+Kx_{\overline{0}}$.
\end{lm}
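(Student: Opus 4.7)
The plan is to mirror the proof of Lemma \ref{BorelsandParabolics2}, with the simplifications coming from the fact that here $\pi_{a_{m+n}} = \pm\epsilon_{\overline{j}}$ is an \emph{even} root and $\pm 2\epsilon_{\overline{j}}$ is \emph{not} a root of $\Phi$.

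First I would verify the description of $\Phi^+(\mathcal{F}')$ by direct comparison using Lemma \ref{BorelsandParabolics1}. Since $a_{m+n}\in\mathcal{F}_0$, the swap $a_{m+n}\leftrightarrow -a_{m+n}$ leaves $\mathcal{F}_1$ unchanged. In the even part, the roots $\pi_{c_j}\pm\pi_{a_{m+n}}$ are permuted among themselves when $a_{m+n}$ is replaced by $-a_{m+n}$, because $\pi_{c_j}\pm\pi_{-a_{m+n}}=\pi_{c_j}\mp\pi_{a_{m+n}}$ and the relative order of $c_j$ and $\pm a_{m+n}$ in the respective flags is the same. An analogous argument handles the odd roots $\pi_{b_i}\pm\pi_{a_{m+n}}$, all of which are positive for both flags because $b_i$ precedes both $a_{m+n}$ in $\mathcal{F}$ and $-a_{m+n}$ in $\mathcal{F}'$. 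The \emph{only} root that genuinely changes sign is the short root $\pi_{a_{m+n}}$ itself, precisely because $2\pi_{a_{m+n}}\notin\Phi$ (contrast with Lemma \ref{BorelsandParabolics2}, where $2\pi_{a_{m+n}}=\pm 2\delta_i$ was also flipped).

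Next I would set $\alpha=\pi_{a_{m+n}}$ and consider the subfunctor $G_\alpha(C)=U_{-\alpha}(C)T(C)U_\alpha(C)$, noting that no $U_{\pm 2\alpha}$ factors are needed. Its Zariski closure is a supersubgroup of $P(\mathcal{G})$ isomorphic to $L\times T'$, where $T'$ is the subtorus acting trivially on $W$, and $L$ consists of all $g\in\mathrm{Stab}_G(W)$ fixing $x_i$ for $i\notin\{\pm a_{m+n},\overline{0}\}$. Because $W$ is purely even and carries the nondegenerate symmetric form inherited from $(.,.)$, the group $L$ is identified with $SO(W)\simeq SO(3)$, which explains why the Levi is $SO(3)\times T'$ rather than $SpO(2|1)\times T'$.

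Finally, to obtain $P(\mathcal{G})=H\ltimes UP(\mathcal{G})$ together with $UP(\mathcal{G})=U^+(\mathcal{F})\cap U^+(\mathcal{F}')$, I would repeat the orthogonality argument from Lemma \ref{BorelsandParabolics2}: for $g\in P(\mathcal{G})$ and $i\in\{a_1,\ldots,a_{m+n-1}\}$ one computes $(gx_{\pm a_{m+n}},x_i)=(x_{\pm a_{m+n}},g^{-1}x_i)=0$ and likewise for $x_{\overline{0}}$, so $P(\mathcal{G})$ preserves the orthogonal decomposition $V_{m+n-1}(A)\perp W$. This yields a morphism $P(\mathcal{G})\to\widetilde{SO}(W)$ whose restriction to $L$ is surjective onto $SO(W)$, so every $p\in P(\mathcal{G})$ can be written as $g\cdot u$ with $g\in L$ and $u\in B^+(\mathcal{F})\cap B^+(\mathcal{F}')$; the unipotent radical part is then forced to be $U^+(\mathcal{F})\cap U^+(\mathcal{F}')$, and the semidirect product decomposition follows because $L$ normalizes this intersection. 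The main (minor) obstacle is simply keeping track of the root-theoretic bookkeeping when the short root is even but the doubled root is absent; there is no essentially new idea beyond the proof of Lemma \ref{BorelsandParabolics2}.
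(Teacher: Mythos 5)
Your proposal is correct and follows exactly the route the paper intends: the paper gives no separate argument for Lemma \ref{BorelsandParabolics3}, stating only that the proof is an easier variant of that of Lemma \ref{BorelsandParabolics2}, and your adaptation (the sign change of the single short root $\pi_{a_{m+n}}$, the subfunctor $U_{-\alpha}TU_{\alpha}$ with no $U_{\pm 2\alpha}$ factors giving the Levi $SO(3)\times T'$, and the orthogonality argument for $V_{m+n-1}(A)\perp W$ yielding $P(\mathcal{G})=H\ltimes(U^+(\mathcal{F})\cap U^+(\mathcal{F}'))$) is precisely that variant.
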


\subsection{G=SpO(2n$|$2m)}

In this section we shortly discuss the remaining case $G=SpO(2n|2m)$. 
The root system $\Phi$ of its Lie superalgebra consists of its even and odd parts
\[\Phi=\Phi_0\cup\Phi_1=\{\pm\delta_i\pm\delta_{i'}, \pm 2\delta_i, \pm\epsilon_{\overline{j}}\pm\epsilon_{\overline{j}'}\}\cup\{\pm\delta_i\pm\epsilon_{\overline{j}}\},\]
where $1\leq i\neq i'\leq n$ and $1\leq j\neq j'\leq m$. The notations for flags and Borel supersubgroups remain the same. 

We need to modify statements of previous lemmas to make to switch from $G=SpO(2n|2m+1)$ to $G=SpO(2n|2m)$ are as follows.
We need to remove all elements $\pi_{c_j}$ and $\pi_{b_i}$ from the first statement of Lemma \ref{BorelsandParabolics1}. The statement of Lemma \ref{Levi1} remains the same. In Lemma \ref{BorelsandParabolics2}, the stament should be $\Phi^+ (\mathcal{F}')=\Phi^+(\mathcal{F})\setminus\{2\pi_{a_{m+n}}\}\cup\{-2\pi_{a_{m+n}}\}$ and $H\simeq Sp(2)\times T'=SL(2)\times T'$. Finally, Lemma \ref{BorelsandParabolics3} is no longer needed because under the assumption of this lemma we have $B^+(\mathcal{F})=B^+(\mathcal{F}')$. 

\subsection{Adjacent Borel supersubgroups}
From now on any pair of Borel supersubgroups $B^+(\mathcal{F})$ and $B^+(\mathcal{F}')$ from the above lemmas will be called {\it even or odd adjacent}, 
depending on whether the corresponding root $\alpha$ is even or odd, respectively.
\begin{lm}\label{thickening}
Let $H$ be an algebraic supergroup. If $H=L\ltimes R$, then $H_l=L_l\ltimes R_l$ for every $l\geq 1$.
\end{lm}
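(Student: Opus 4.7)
The plan is to unwind the definition of the $l$-th Frobenius kernel $H_l=\ker(F^l_H\colon H\to H^{(l)})$ and then exploit the naturality of the Frobenius morphism. On the level of the functor of points, any $A$-point of $H=L\ltimes R$ is uniquely a product $xy$ with $x\in L(A)$ and $y\in R(A)$, and the multiplication is controlled by the conjugation action $\sigma\colon L\times R\to R$.

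First I would check that Frobenius respects the semidirect-product structure. The Frobenius twist $(-)^{(l)}$ and the Frobenius morphism $F^l$ are both functorial on the category of affine supergroup schemes. Applying this naturality to the structure maps of $H=L\ltimes R$ (the two inclusions $L,R\hookrightarrow H$, the retraction $H\to L$, and the action $\sigma$) one obtains $H^{(l)}=L^{(l)}\ltimes R^{(l)}$ together with the identity
\[
F^l_H(xy)=F^l_L(x)\cdot F^l_R(y)
\]
for every $x\in L(A)$ and $y\in R(A)$.

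Next I would use this to describe $H_l$ functorially. An $A$-point $xy$ lies in $H_l(A)$ precisely when $F^l_L(x)=e$ and $F^l_R(y)=e$, i.e.\ when $x\in L_l(A)$ and $y\in R_l(A)$. Thus $H_l(A)=L_l(A)\cdot R_l(A)$ and $L_l(A)\cap R_l(A)=\{e\}$. To conclude that this is a genuine semidirect product one has to verify that $\sigma$ restricts to an action of $L_l$ on $R_l$; this again follows from naturality, since for $x\in L_l(A)$ and $y\in R_l(A)$,
\[
F^l_R\bigl(\sigma(x)(y)\bigr)=\sigma^{(l)}\bigl(F^l_L(x)\bigr)\bigl(F^l_R(y)\bigr)=e,
\]
so $\sigma(x)(y)\in R_l(A)$. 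Combining these observations gives $H_l(A)=L_l(A)\ltimes R_l(A)$ for every superalgebra $A$, and by Yoneda this identifies $H_l$ with $L_l\ltimes R_l$ as supergroups.

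The only subtle point is the naturality of $F^l$ in the super setting and the fact that forming Frobenius twists commutes with forming semidirect products; this is routine but should be stated explicitly, since the super structure makes it tempting to work directly with coordinate algebras, where the analogous argument on ideals generated by $p^l$-th powers of even functions and by odd functions would be considerably more cumbersome.
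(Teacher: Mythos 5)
Your argument is correct, but it proves the lemma by a genuinely different route than the paper does. You work with the functor of points: you identify $H_l$ with $\ker(F^l_H\colon H\to H^{(l)})$, use naturality of the relative Frobenius (applied to the inclusions $L,R\hookrightarrow H$, the retraction and the action) together with the fact that the Frobenius twist commutes with forming semidirect products, and then read off $H_l(A)=L_l(A)\ltimes R_l(A)$ pointwise, concluding by Yoneda. The paper instead argues dually and never mentions the Frobenius morphism: the hypothesis $H=L\ltimes R$ is translated into a decomposition $K[H]=D\oplus J$ with $D\simeq K[L]$ a Hopf supersubalgebra and $J$ a Hopf superideal, and then the defining ideal of the Frobenius kernel is seen to be compatible with this decomposition, giving $K[H_l]=D/D(D_0^+)^{p^l}\oplus\overline{J}$ and hence $H_l=L_l\ltimes R_l$ in one line. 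Contrary to your closing remark, this coordinate-algebra argument is not cumbersome at all -- it is the paper's three-line proof, and it has the advantage of working directly with the definition of $H_l$ used in the text, $K[H_l]=K[H]/K[H](K[H]_0^+)^{p^l}$, so nothing about twists or relative Frobenius morphisms of superschemes needs to be invoked. What your approach buys is transparency at the level of points and a statement that is insensitive to how the kernel is presented; its cost is exactly the two facts you flag yourself -- that $\ker(F^l_H)$ coincides with the $H_l$ defined via the coordinate superalgebra, and that $F^l$ is natural and compatible with semidirect products in the super setting. These are standard (and available in the references the paper cites, e.g.\ \cite{shuw}, \cite{zub2}), so flagging them with a citation would close the argument; as written there is no genuine gap.
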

\begin{proof}
Let $C$ denote the Hopf superalgebra $K[H]$.
The condition $H=L\ltimes R$ is equivalent to $C=D\oplus J$, where $D$ is a Hopf supersubalgebra of $C$ and $J$ is a Hopf superideal of $C$. Moreover, the epimorphism $H\to L$ is dual to the embedding $D\to C$. Since $K[H_l]=C/C (C_0^+)^{p^r}=D/D (D_0^+)^{p^r}\oplus \overline{J}$, where $\overline{J}=J/(J\cap C (C_0^+)^{p^r})$ is a Hopf superideal of $K[H_l]$, our lemma follows.
\end{proof}

\section{A note on linkage for ortho-symplectic groups}

Let $G$ be an ortho-symplectic supergroup. For a given Borel supersubgroup $B^-(\mathcal{F})$ and a weight $\lambda\in X(T)$, let $H^0_{\mathcal{F}}(\lambda^a)$ denote $ind^G_{B^-(\mathcal{F})} K^a_{\lambda}, a=0, 1$. Here $K^a_{\lambda}$ is an irreducible representation of $B^-(\mathcal{F})$ of weight $\lambda$ and of parity $a$. 
The supermodule $H^0_{\mathcal{F}}(\lambda^0)$ is denoted just by $H^0_{\mathcal{F}}(\lambda)$. 

It can be easily shown that the socle of $H^0_{\mathcal{F}}(\lambda)$ is an irreducible 
supermodule that is denoted by $L_{\mathcal{F}}(\lambda)$. Every irreducible $G$-supermodule is completely determined by its highest weight $\lambda$ and parity $a$, and is isomorphic to $L_{\mathcal{F}}(\lambda^a)$.
Moreover, all other composition factors of $H^0_{\mathcal{F}}(\lambda)$ are $L_{\mathcal{F}}(\mu^b)=\Pi^b L_{\mathcal{F}}(\lambda)$, where $\mu <_{\mathcal{F}}\lambda, p(\lambda)=p(\mu)+b$. The proof of these facts are standard and left for the reader.

Since simple supermodules are uniquely determined by their highest weights and parities and 
$Ext^1_G(L(\lambda^a), L(\mu^b)) \simeq \Pi^{a+b} Ext^1_G(L(\lambda), L(\mu))$, 
we can view blocks $\mathscr{B}$ of $G$ as subsets of dominant weights $X(T)$. 

Let $V_{\mathcal{F}}(\lambda^a)$ be a Weyl supermodule, that is $V_{\mathcal{F}}(\lambda^a)=H^0_{\mathcal{F}}(\lambda^a)^{< t >}$, where $V\mapsto V^{< t >}$ is the duality functor, induced by (super)transposition (see \cite{zub3} for more details). The following proposition is a folklore.
\begin{pr}\label{simplelinkage}
If $Ext^1_G(L_{\mathcal{F}}(\lambda), L_{\mathcal{F}}(\mu))\neq 0$, then either $L_{\mathcal{F}}(\lambda)$ (or its parity shift) is a composition factor of $H^0_{\mathcal{F}}(\mu)$, or $L_{\mathcal{F}}(\mu)$ (or its parity shift) is a composition factor of $H^0_{\mathcal{F}}(\lambda)$.
\end{pr}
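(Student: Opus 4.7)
My plan is to extract a surjection from the Weyl supermodule $V_{\mathcal{F}}(\lambda^a)$ directly from a nonsplit representative of the given $\mathrm{Ext}^1$-class, thereby producing the required composition factor without appealing to any Ext-vanishing statement. Pick a nontrivial class in $\mathrm{Ext}^1_G(L_{\mathcal{F}}(\lambda), L_{\mathcal{F}}(\mu))$, represented by a nonsplit short exact sequence
$$0 \to L_{\mathcal{F}}(\mu) \to E \to L_{\mathcal{F}}(\lambda) \to 0.$$
The $T$-weights of $E$ lie among those of $L_{\mathcal{F}}(\lambda)$ and $L_{\mathcal{F}}(\mu)$, so every weight of $E$ is $\leq_{\mathcal{F}} \lambda$ or $\leq_{\mathcal{F}} \mu$; in particular at least one of $\lambda, \mu$ occurs as a maximal weight of $E$. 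By the symmetric role of the two summands in the statement, assume $\lambda$ is maximal.

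Any nonzero $v \in E_{\lambda}$ is then annihilated by $U^+(\mathcal{F})$ (no strictly higher weights exist in $E$), so $v$ generates a $B^+(\mathcal{F})$-subsupermodule isomorphic to the one-dimensional module $K_{\lambda}^a$ for some parity $a$. By the universal property of $V_{\mathcal{F}}(\lambda^a)$---which follows from its definition $V_{\mathcal{F}}(\lambda^a) = H^0_{\mathcal{F}}(\lambda^a)^{\langle t \rangle}$ combined with super Frobenius reciprocity, by dualizing the $B^-(-\mathcal{F})$-equivariant functional $E^{\langle t \rangle} \to K_{\lambda}^a$ provided by $v$---there is a $G$-homomorphism $V_{\mathcal{F}}(\lambda^a) \to E$ sending the canonical generator to $v$. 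Its image $Gv$ surjects onto $L_{\mathcal{F}}(\lambda)$; if $Gv \neq E$, then $Gv \cap L_{\mathcal{F}}(\mu) = 0$ by the simplicity of $L_{\mathcal{F}}(\mu)$, and the sequence would split, contradicting the nontriviality of the class. Hence $V_{\mathcal{F}}(\lambda^a) \twoheadrightarrow E$, and restricting to the unique maximal subsupermodule $\mathrm{rad}\, V_{\mathcal{F}}(\lambda^a)$ yields a surjection onto $L_{\mathcal{F}}(\mu)$. Consequently $L_{\mathcal{F}}(\mu)$ is a composition factor of $V_{\mathcal{F}}(\lambda^a)$, and, because super-transposition preserves composition-factor multiplicities up to parity shift, also a composition factor (possibly with parity shift) of $H^0_{\mathcal{F}}(\lambda)$.

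The step I expect to be the main obstacle is the universal property of $V_{\mathcal{F}}(\lambda^a)$ together with the proper parity bookkeeping: in the classical reductive setting the lifting of a highest-weight vector to a map out of the Weyl module is essentially tautological, but in the super setting one must verify that the dualization through $\langle t \rangle$ and super Frobenius reciprocity behave correctly on parities and respect the choice of Borel $B^+(\mathcal{F})$ versus $B^-(\mathcal{F})$. Once this is checked, the remainder of the argument is purely formal.
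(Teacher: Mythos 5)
Your Weyl-module argument in the case where $\lambda$ is a maximal weight of $E$ is sound, and it is essentially the second half of the paper's own proof: there one chooses a lift $v$ of the highest weight vector of $L_{\mathcal{F}}(\lambda)$ and, when $Dist(B^+(\mathcal{F}))^+v=0$, concludes $Dist(G)v=E$ and that $E$ is an epimorphic image of $\Pi^a V_{\mathcal{F}}(\lambda)$, exactly as you do. The genuine gap is the reduction ``by the symmetric role of the two summands in the statement, assume $\lambda$ is maximal.'' The hypothesis is not symmetric: in the non-split sequence $0\to L_{\mathcal{F}}(\mu)\to E\to L_{\mathcal{F}}(\lambda)\to 0$ the weight $\mu$ sits in the submodule and $\lambda$ in the quotient; only the conclusion of the proposition is symmetric in $\lambda$ and $\mu$, and that does not license a WLOG on which weight is maximal in $E$. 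If $\lambda<_{\mathcal{F}}\mu$, so that only $\mu$ is maximal, your construction does not get off the ground: a nonzero vector of $E_{\mu}$ lies inside $L_{\mathcal{F}}(\mu)$ and generates only $L_{\mathcal{F}}(\mu)$, so the non-splitting argument produces no surjection from a Weyl supermodule onto $E$, and there need be no $U^+(\mathcal{F})$-invariant vector of weight $\lambda$ in $E$ at all. That is precisely the case in which the intended conclusion is the \emph{other} alternative, namely that $L_{\mathcal{F}}(\lambda)$ (up to parity shift) is a composition factor of $H^0_{\mathcal{F}}(\mu)$, and it needs its own argument: either observe that non-splitness forces $E$ to have simple socle $L_{\mathcal{F}}(\mu)$ and embed $E$ into $H^0_{\mathcal{F}}(\mu)$ via Frobenius reciprocity, which is what the paper does (following Proposition 5.6 of \cite{zub3}); or apply the duality ${}^{< t >}$ you already invoke to the extension itself, obtaining a non-split sequence $0\to L_{\mathcal{F}}(\lambda)\to E^{< t >}\to L_{\mathcal{F}}(\mu)\to 0$ (up to parity shift), and then run your Weyl-module argument with $\mu$ in the role of the quotient's highest weight. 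As written, the case $\lambda<_{\mathcal{F}}\mu$ is simply missing.

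Two smaller points. First, the claim that any nonzero $v\in E_{\lambda}$ has $Gv$ surjecting onto $L_{\mathcal{F}}(\lambda)$ needs $L_{\mathcal{F}}(\mu)_{\lambda}=0$; this does hold when $\lambda$ is maximal and $\lambda\neq\mu$, but the degenerate case $\lambda=\mu$ should be disposed of separately (it is harmless, since $L_{\mathcal{F}}(\mu)$ is the socle of $H^0_{\mathcal{F}}(\mu)$, so the conclusion is automatic). Second, the universal property of $V_{\mathcal{F}}(\lambda^a)$ that you flag as the delicate step is indeed the same ingredient the paper uses (cited to \cite{zub3}), so once the missing case is supplied your route and the paper's coincide in substance.
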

\begin{proof}
Assume  
\[0\to L_{\mathcal{F}}(\mu)\to M\to \Pi^a L_{\mathcal{F}}(\lambda)\to 0,
\]
where $a=0, 1$ is a non-split short exact sequence. 

Let $v\in M$ be the highest weight vector of $\Pi ^a L_{\mathcal{F}}(\lambda)$ (modulo
$L_{\mathcal{F}}(\mu)$). Then $Dist(B^+(\mathcal{F}))^+ v\subseteq L_{\mathcal{F}}(\mu)$. Thus either $Dist(B^+(\mathcal{F}))^+ v\neq 0$, which implies 
$\lambda <_{\mathcal{F}}\mu$, 
or $Dist(B^+(\mathcal{F}))^+ v =0$. In the latter case $Dist(G)v =Dist(B^-(\mathcal{F})) v $ is mapped onto $\Pi ^a L_{\mathcal{F}}(\lambda)$, hence $Dist(G)v=M$ and 
$\mu <_{\mathcal{F}}\lambda$.

If $\lambda <_{\mathcal{F}}\mu$, then $M$ is embedded into $H^0_{\mathcal{F}}(\mu)$ (analogously to the proof of Proposition 5.6 in \cite{zub3}), hence
$\Pi ^a L_{\mathcal{F}}(\lambda)$ is a composition factor of $H^0_{\mathcal{F}}(\mu)$.

If $Dist(G)v=M$, then $M$ is an epimorphic image of $\Pi ^a V_{\mathcal{F}}(\lambda)$ (cf. \cite{zub3}), hence $\Pi ^a L_{\mathcal{F}}(\mu)$ is a composition factor of $V_{\mathcal{F}}(\lambda)^{< t >}=H^0_{\mathcal{F}}(\lambda)$.
\end{proof}
\begin{rem}\label{forWeyltoo}
The statement of the above proposition is also valid if we replace induced supermodules by Weyl modules.
\end{rem}

\section{Representations of $SpO(2|1)$}

Throughout this section $G=SpO(2|1)$.

\subsection{The coordinate algebra}

For a given $A\in\mathsf{SAlg}_K$ the group $G(A)$ consists of all matrices from $SL(2|1)(A)$
\[\left( \begin{array}{ccc}
g_{1,1} & g_{1,-1} & g_{1,\overline{0}} \\
g_{-1,1} & g_{-1,-1} & g_{-1,\overline{0}} \\
g_{\overline{0},1} & g_{\overline{0},-1} & g_{\overline{0},\overline{0}} 
\end{array}\right), \]
which satisfy
\[g_{1,1}g_{-1,-1}-g_{1,-1}g_{-1,1}+g_{\overline{0},1}g_{\overline{0},-1}=1; \ g_{\overline{0},\overline{0}}^2-2g_{1,\overline{0}}g_{-1,\overline{0}}=1;\]
\[ -g_{-1,1}g_{1,\overline{0}}+g_{1,1}g_{-1,\overline{0}}+g_{\overline{0},\overline{0}}g_{\overline{0},1}=0=-g_{-1,-1}g_{1,\overline{0}}+g_{1,-1}g_{-1,\overline{0}}+g_{\overline{0},\overline{0}}g_{\overline{0},-1}.\]
\begin{lm}\label{simplifiedform}
The superalgebra $K[G]$ is generated by the elements $x_{i,j}, x_{i, \overline{0}}$ for $i,j\in\{1,-1\}$, subject to
the relation $x_{1,1}x_{-1,-1}-x_{1,-1}x_{-1,1}+x_{1,\overline{0}}x_{-1,\overline{0}}=1$. Symmetrically, $K[G]$ is generated by the elements 
$x_{i,j}, x_{\overline{0},i}$ for $i,j\in\{1,-1\}$, subject to the relation $x_{1,1}x_{-1,-1}-x_{1,-1}x_{-1,1}+x_{\overline{0},1}x_{\overline{0},-1}=1$.
\end{lm}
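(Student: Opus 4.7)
The plan is to realize $K[G]$ as the coordinate superalgebra of the supergroup scheme represented by
\[
R = K[a,b,c,d,\alpha,\beta]/(ad - bc + \alpha\beta - 1),
\]
where $a,b,c,d$ are even and $\alpha,\beta$ are odd, and to prove the isomorphism $K[G] \cong R$ via the functor of points. The candidate map $\phi: R \to K[G]$ sends $a, b, c, d, \alpha, \beta$ to $x_{1,1}, x_{1,-1}, x_{-1,1}, x_{-1,-1}, x_{1,\overline{0}}, x_{-1,\overline{0}}$ respectively. For brevity write $y = x_{\overline{0},\overline{0}}$, $\gamma = x_{\overline{0},1}$, $\delta = x_{\overline{0},-1}$, $\alpha = x_{1,\overline{0}}$, $\beta = x_{-1,\overline{0}}$.

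First I would check well-definedness of $\phi$. The paper's first defining relation is $ad - bc + \gamma\delta = 1$, whereas the target presentation requires $ad - bc + \alpha\beta = 1$, so the task reduces to proving $\gamma\delta = \alpha\beta$ in $K[G]$. The plan is to multiply the two block relations $y\gamma = c\alpha - a\beta$ and $y\delta = d\alpha - b\beta$ to obtain $y^{2}\gamma\delta = (ad-bc)\alpha\beta$, then substitute $y^{2} = 1 + 2\alpha\beta$ and $ad-bc = 1 - \gamma\delta$; exploiting the nilpotencies $(\alpha\beta)^{2} = (\gamma\delta)^{2} = 0$ to solve the resulting equation gives $\gamma\delta \cdot \alpha\beta = 0$ and then $\gamma\delta = \alpha\beta$.

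Next I would establish surjectivity of $\phi$, i.e., express $y, \gamma, \delta$ in terms of the six chosen generators. Substituting the formulas $\gamma = y^{-1}(c\alpha - a\beta)$ and $\delta = y^{-1}(d\alpha - b\beta)$ into the entries of $A - By^{-1}C$, each entry collapses via $\alpha^{2} = \beta^{2} = 0$ to the corresponding entry of $A$ scaled by a single common factor, yielding the clean identity
\[
A - By^{-1}C = (1 + y^{-2}\alpha\beta)\,A.
\]
Taking determinants, using $(\alpha\beta)^{2} = 0$ together with $y^{-2} = 1 - 2\alpha\beta$ (from $y^{2} = 1 + 2\alpha\beta$), gives $\det(A - By^{-1}C) = y^{2}(ad - bc)$, so the Berezinian condition $\det(A - By^{-1}C) = y$ reduces to $y(ad - bc) = 1$. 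Combined with $ad - bc = 1 - \alpha\beta$ this forces $y = 1 + \alpha\beta$; feeding this back into the two block relations and using $\alpha\beta(c\alpha - a\beta) = 0$ produces $\gamma = c\alpha - a\beta$ and $\delta = d\alpha - b\beta$ inside the image of $\phi$.

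For injectivity I would construct the inverse natural transformation $\psi_{A}: G'(A) \to G(A)$ sending a tuple $(a_{0},b_{0},c_{0},d_{0},\alpha_{0},\beta_{0})$ with $a_{0}d_{0} - b_{0}c_{0} + \alpha_{0}\beta_{0} = 1$ to the $3 \times 3$ matrix whose top $2 \times 3$ block is given and whose last row is $(c_{0}\alpha_{0} - a_{0}\beta_{0},\ d_{0}\alpha_{0} - b_{0}\beta_{0},\ 1 + \alpha_{0}\beta_{0})$; verifying that this matrix satisfies all four OSp relations and has Berezinian $1$ is precisely the computation of the previous two paragraphs run in reverse. The second, symmetric presentation of the lemma follows by solving the same block relation for $\alpha,\beta$ in terms of $\gamma,\delta$, which is valid because the coefficient matrix has determinant $ad - bc = 1 - \gamma\delta$, a unit. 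The main obstacle will be the sign choice for $y$ in the surjectivity step: the OSp relations alone determine $y$ only up to $\pm(1 + \alpha\beta)$, and it is the Berezinian condition $\mathrm{Ber}(g) = 1$ that pins down the correct root and makes the elimination work.
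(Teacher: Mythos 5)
Your proposal is correct and follows essentially the same route as the paper: the identities $\gamma\delta=\alpha\beta$, $y(ad-bc)=1$ (forced by the Berezinian condition), $y=1+\alpha\beta$, and $\gamma=c\alpha-a\beta$, $\delta=d\alpha-b\beta$ are exactly the eliminations carried out in the paper's proof of Lemma \ref{simplifiedform}. The only difference is that you make the completeness of the presentation explicit by constructing the inverse natural transformation on points, a step the paper leaves implicit after verifying generation and the relation.
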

\begin{proof}
The equalities
\[x_{\overline{0},1}=\frac{x_{-1,1}x_{1,\overline{0}}-x_{1,1}x_{-1,\overline{0}}}{x_{\overline{0},\overline{0}}}, \ x_{\overline{0},-1}=\frac{x_{-1,-1}x_{1,\overline{0}}-x_{1,-1}x_{-1,\overline{0}}}{x_{\overline{0},\overline{0}}}
\]
imply the relation $(x_{1,1}x_{-1,-1}-x_{1,-1}x_{-1,1})(1+x_{1,\overline{0}}x_{-1,\overline{0}})=1$. After multiplication by $1-x_{1,\overline{0}}x_{-1,\overline{0}}$ we obtain $x_{1,1}x_{-1,-1}-x_{1,-1}x_{-1,1}=1-x_{1,\overline{0}}x_{-1,\overline{0}}$.

The above formulas for $x_{\overline{0},1}$ and $x_{\overline{0},-1}$ also imply
\[\frac{x_{i,\overline{0}}x_{\overline{0},j}}{x_{\overline{0},\overline{0}}}=-x_{ij}x_{1,\overline{0}}x_{-1,\overline{0}} \text{ for } i, j\in\{1,-1\}.\]
These equalities and $Ber(x)=1$ imply $(x_{1,1}x_{-1,-1}-x_{1,-1}x_{-1,1})x_{\overline{0},\overline{0}}=1$. Since elements $x_{1,1}x_{-1,-1}-x_{1,-1}x_{-1,1}, 1+x_{1,\overline{0}}x_{-1,\overline{0}}$ and $x_{\overline{0},\overline{0}}$ commute
(they are even), the cancellation property implies $x_{\overline{0},\overline{0}}=1+x_{1,\overline{0}}x_{-1,\overline{0}}$.
From the equality $x_{\overline{0},\overline{0}}=1+x_{1,\overline{0}}x_{-1,\overline{0}}$ we deduce that
\[x_{\overline{0},1}=x_{-1,1}x_{1,\overline{0}}-x_{1,1}x_{-1,\overline{0}}, \ x_{\overline{0},-1}=x_{-1,-1}x_{1,\overline{0}}-x_{1,-1}x_{-1,\overline{0}},\]
which proves the first part of the statement.
We leave it for the reader to prove the second part of the statement using symmetrical equalities $x_{\overline{0},\overline{0}}=1+x_{\overline{0},1}x_{\overline{0},-1}$ and
\[x_{1,\overline{0}}=-x_{1,-1}x_{\overline{0},1}+x_{1,1}x_{\overline{0},-1}, \ x_{-1,\overline{0}}=-x_{-1,-1}x_{\overline{0},1}+x_{-1,1}x_{\overline{0},-1}.\]
\end{proof}

Define $x'_{1,1}=x_{1,1}(1+x_{1,\overline{0}}x_{-1,\overline{0}})$ and $x'_{1,-1}=x_{1,-1}(1+x_{1,\overline{0}}x_{-1,\overline{0}})$. 
We can express $K[G]$ in terms of generators $x'_{1,1}$, $x'_{1,-1}$, $x_{-1,1}$, $x_{-1,-1}$, $x_{1,\overline{0}}$ and $x_{-1,\overline{0}}$
and the previous relation becomes $x'_{1,1}x_{-1,-1}-x'_{1,-1}x_{-1,1}=1$. We will denote $D=x'_{1,1}x_{-1,-1}-x'_{1,-1}x_{-1,1}$. Then 
\[K[G]=K[x'_{1,1}, x'_{1,-1}, x_{-1,1}, x_{-1,-1}, x_{1,\overline{0}}, x_{-1,\overline{0}}]/<D-1>\simeq K[SL(2)]\otimes K[x_{1,\overline{0}}, x_{-1,\overline{0}}].\] 

Consider the weights with respect to the right action of the torus $T$ consisting of diagonal matrices 
$\left( \begin{array}{ccc}
g_{1,1} & 0 & 0 \\
0 & g_{1,1}^{-1} & 0  \\
0 & 0 & 1 
\end{array}\right)$. 
Since the weight of $x_{1,\overline{0}}$ is $+1$, the weight of $x_{-1,\overline{0}}$ is $-1$, and the weight of $1+x_{1,\overline{0}}x_{-1,\overline{0}}$ is zero, the new generators 
$x'_{1,1}$ and $x'_{1,-1}$ have the same weights as the corresponding old generators $x_{1,1}$ and $x_{1,-1}$, respectively.

The Lie superalgebra $Lie(G)$ of $G$ has a basis consisting of the matrices
\[{\bf h}=E_{1,1}-E_{-1,-1}, {\bf e}=E_{1,-1}, {\bf f}=E_{-1,1}, {\bf x}=E_{1,\overline{0}}+E_{\overline{0},-1}, {\bf y}=E_{-1,\overline{0}}-E_{\overline{0},1}.
\]
The distribution superalgebra $Dist(G)$ is generated by the divided powers ${\bf e}^{(t)}, {\bf f}^{(t)}$, where $t>0$, of the elements ${\bf e}, {\bf f}$, by the binomial polynomials 
$\left(\begin{array}{c} {\bf h} \\ t \end{array}\right)$ and by the elements ${\bf x}, {\bf y}$ (see \cite{shuw}).

The element $\bf f$ acts on $K[G]$ as an even right superderivation, hence  
\[(x y){\bf f}^{(t)}=\sum_{0\leq i\leq t}\left(\begin{array}{c} t \\ i \end{array}\right)x{\bf f}^{(i)} y{\bf f}^{(t-i)} \text{ for } x, y\in K[G], \]
and the element $\bf y$ acts on $K[G]$ as an odd right superderivation.

\begin{lm}\label{action}
The elements ${\bf f}^{(t)}$ and $\bf y$ act on the generators of $K[G]$ for each $i,j\in\{1,-1\}$ as follows.
\begin{enumerate}
\item $x_{i,j}{\bf f}=\delta_{i,-1}x_{1,j}$ and  $x_{i,\overline{0}}{\bf f}=\delta_{i,-1}x_{1,\overline{0}}$;
\item $x_{i,j}{\bf f}^{(t)}=x_{i,\overline{0}}{\bf f}^{(t)}=0$ for $t\geq 2$;
\item $x_{ij}{\bf y}=-\delta_{i,-1}(x_{-1,j}x_{1,\overline{0}}-x_{1, j}x_{-1,\overline{0}})$ and $x_{i,\overline{0}}{\bf y}=\delta_{i,-1}(1+x_{1,\overline{0}}x_{-1,\overline{0}})$.
\end{enumerate}
\end{lm}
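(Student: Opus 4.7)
The plan is to compute these actions directly from the right $Dist(G)$-module structure on $K[G]$, which arises from viewing $K[G]$ as a left comodule over itself via the comultiplication $\Delta$. For matrix coefficients one has $\Delta(x_{ij}) = \sum_{k \in \{1,-1,\overline{0}\}} x_{ik}\otimes x_{kj}$, so the action on generators takes the form
\[
x_{ij}\cdot\phi \;=\; \sum_k (-1)^{|\phi||x_{kj}|}\,\phi(x_{ik})\,x_{kj},
\]
with signs dictated by the super Hopf-algebra conventions of Section 1.2. The task then reduces to evaluating $\phi$ on the matrix coefficients $x_{ik}$.

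I use the standard identification of $Lie(G)\subseteq Dist_1(G)$ with tangent vectors at the identity: for $X\in Lie(G)$ viewed as a matrix, $X(x_{ij})$ equals the $(i,j)$-entry of $X$. Thus $\mathbf{f}(x_{ij})=\delta_{i,-1}\delta_{j,1}$ and $\mathbf{y}(x_{ij})=\delta_{i,-1}\delta_{j,\overline{0}}-\delta_{i,\overline{0}}\delta_{j,1}$. Substituting into the displayed formula yields (1) at once, since $\mathbf{f}$ is even and no signs intervene. For (3), the only nonvanishing contribution comes from $k=\overline{0}$, producing $\pm\delta_{i,-1}x_{\overline{0},j}$ (respectively $\pm\delta_{i,-1}x_{\overline{0},\overline{0}}$), and I then rewrite these non-basic matrix coefficients in terms of the chosen generators by invoking the identities $x_{\overline{0},j}=x_{-1,j}x_{1,\overline{0}}-x_{1,j}x_{-1,\overline{0}}$ and $x_{\overline{0},\overline{0}}=1+x_{1,\overline{0}}x_{-1,\overline{0}}$ established in the proof of Lemma \ref{simplifiedform}.

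For (2), I compute $\mathbf{f}^{(t)}$ on matrix coefficients via the one-parameter additive subgroup $\iota\colon\mathbb{G}_a\to G$ defined by $s\mapsto I+s\mathbf{f}$; this is well-defined because $\mathbf{f}=E_{-1,1}$ satisfies $\mathbf{f}^2=0$ as a matrix. By the defining property of divided powers, $\mathbf{f}^{(t)}(f)$ equals the coefficient of $s^t$ in the pullback $f\circ\iota\in K[s]$. A direct computation gives $x_{ij}\circ\iota=\delta_{ij}+s\,\delta_{i,-1}\delta_{j,1}$ and $x_{i,\overline{0}}\circ\iota=0$, each of degree at most one in $s$; hence $\mathbf{f}^{(t)}(x_{ik})=0$ for all $t\ge 2$ and every matrix generator. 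The comodule formula above then forces $x_{ij}\cdot\mathbf{f}^{(t)}=x_{i,\overline{0}}\cdot\mathbf{f}^{(t)}=0$.

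The main bookkeeping obstacle is tracking the parity-dependent sign $(-1)^{|\phi||x_{kj}|}$ for the odd element $\mathbf{y}$: when $j\in\{1,-1\}$ we have $|x_{\overline{0},j}|=1$ and the sign is $-1$, whereas when $j=\overline{0}$ we have $|x_{\overline{0},\overline{0}}|=0$ and no sign appears. This bookkeeping accounts for the apparent asymmetry between the two formulas in (3).
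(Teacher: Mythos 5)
Your computation is correct and is exactly the ``straightforward verification'' the paper leaves implicit: you use the right $Dist(G)$-action $f\phi=\sum(-1)^{|\phi||f_2|}\phi(f_1)f_2$ with $\Delta(x_{ij})=\sum_k x_{ik}\otimes x_{kj}$, evaluate $\mathbf{f}$, $\mathbf{f}^{(t)}$ (via the one-parameter subgroup $s\mapsto I+s\mathbf{f}$) and $\mathbf{y}$ on matrix coefficients, and rewrite $x_{\overline{0},j}$ and $x_{\overline{0},\overline{0}}$ using the identities from Lemma \ref{simplifiedform}, with the parity signs handled correctly. No gaps.
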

\begin{proof}
Straightforward verification.
\end{proof}

\begin{lm}\label{leftaction}
The left action of the superalgebra $Dist(G)$ on the generators of $K[G]$ for each $i,j\in\{1,-1\}$ is given as follows.
\begin{enumerate}
\item ${\bf e}^{(t)}x_{i,j}=\delta_{t 1}\delta_{-1, j}x_{i, 1}, \ {\bf f}^{(t)}x_{i,j}=\delta_{t 1}\delta_{1 j}x_{i,-1}$ for $t\geq 1$;
\item ${\bf x}x_{i,j}=-\delta_{-1, j}x_{i, \overline{0}}, \ {\bf y}x_{i,j}=\delta_{1 j}x_{i, \overline{0}}$;
\item ${\bf e}^{(t)}x_{i,\overline{0}}={\bf f}^{(t)}x_{i, \overline{0}}=0$ for $t\geq 1$;
\item ${\bf x}x_{i, \overline{0}}=x_{i, 1}, \ {\bf y}x_{i,\overline{0}}=x_{i,-1}$.
\end{enumerate}
\end{lm}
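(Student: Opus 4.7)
The plan is to derive every formula mechanically from the defining expression for the left $Dist(G)$-action on $K[G]$, viewed as a right $K[G]$-comodule via its own coproduct $\Delta$, namely
\[\phi\cdot f=\sum (-1)^{|\phi||f_1|}\,f_1\,\phi(f_2),\qquad \Delta(f)=\sum f_1\otimes f_2.\]
Since $\Delta(x_{i,k})=\sum_{s\in\{1,-1,\overline{0}\}} x_{i,s}\otimes x_{s,k}$, each identity in the lemma reduces to knowing how the distribution in question evaluates on a single matrix coefficient $x_{s,k}$.

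First I would tabulate these evaluations. Under the identification of $Lie(G)$ with its image in $Dist_1(G)$, the matrix $E_{a,b}$ pairs with $x_{s,k}$ to give $\delta_{s,a}\delta_{k,b}$. Reading this off from ${\bf e}=E_{1,-1}$, ${\bf f}=E_{-1,1}$, ${\bf x}=E_{1,\overline{0}}+E_{\overline{0},-1}$, and ${\bf y}=E_{-1,\overline{0}}-E_{\overline{0},1}$ yields
\[{\bf e}(x_{s,k})=\delta_{s,1}\delta_{k,-1},\quad {\bf f}(x_{s,k})=\delta_{s,-1}\delta_{k,1},\]
\[{\bf x}(x_{s,k})=\delta_{s,1}\delta_{k,\overline{0}}+\delta_{s,\overline{0}}\delta_{k,-1},\quad {\bf y}(x_{s,k})=\delta_{s,-1}\delta_{k,\overline{0}}-\delta_{s,\overline{0}}\delta_{k,1}.\]
The divided powers ${\bf e}^{(t)},{\bf f}^{(t)}$ for $t\geq 2$ lie in the annihilator of $\mathfrak{m}/\mathfrak{m}^2$ and therefore kill every generator $x_{s,k}$, which immediately settles item (3) and the $t\geq 2$ portion of item (1).

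For the remaining items I would substitute the tabulated values into the displayed formula and read off the answer. The only nontrivial bookkeeping is the parity sign $(-1)^{|\phi||x_{i,s}|}$ arising for the odd distributions ${\bf x},{\bf y}$: acting on $x_{i,j}$ with $i,j\in\{1,-1\}$ the only surviving summand has $s=\overline{0}$ so $|x_{i,s}|=1$, producing an extra factor $-1$ and hence ${\bf x}\,x_{i,j}=-\delta_{j,-1}x_{i,\overline{0}}$ and ${\bf y}\,x_{i,j}=\delta_{j,1}x_{i,\overline{0}}$; whereas acting on $x_{i,\overline{0}}$ the only surviving summand has $s\in\{1,-1\}$ so $|x_{i,s}|=0$ and no sign appears, yielding ${\bf x}\,x_{i,\overline{0}}=x_{i,1}$ and ${\bf y}\,x_{i,\overline{0}}=x_{i,-1}$. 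The whole verification is purely mechanical, so no substantive obstacle is expected; the one point requiring care is keeping the parity convention $|x_{i,j}|=0$, $|x_{i,\overline{0}}|=1$ straight throughout.
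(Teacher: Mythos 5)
Your computation is correct and is exactly the ``straightforward verification'' the paper leaves to the reader: apply $\phi\cdot f=\sum(-1)^{|\phi||f_1|}f_1\,\phi(f_2)$ with $\Delta(x_{i,k})=\sum_{s}x_{i,s}\otimes x_{s,k}$, the evaluations of ${\bf e}$, ${\bf f}$, ${\bf x}$, ${\bf y}$ and their divided powers on the matrix coefficients, and the parity signs, which reproduces all four items with the conventions used elsewhere in the paper (e.g.\ ${\bf y}x_{1,1}^l=lx_{1,1}^{l-1}x_{1,\overline{0}}$). The only cosmetic point is that the $t=1$ case of item (3) is settled not by the $t\geq 2$ divided-power remark but by the tabulated values ${\bf e}(x_{s,\overline{0}})={\bf f}(x_{s,\overline{0}})=0$, which your substitution step covers anyway.
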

\begin{proof}
Straightforward verification.
\end{proof}

\subsection{Induced and simple supermodules}

The group $G$ has only two Borel supersubgroups, $B^-$ and $B^+$. They consist of all matrices of the form
\[\left( \begin{array}{ccc}
g_{1,1} & 0 & 0 \\
g_{-1,1} & g_{1,1}^{-1} & -g_{1,1}^{-1}g_{\overline{0},1}  \\
g_{\overline{0},1} & 0 & 1 
\end{array}\right) \]
and 
\[\left( \begin{array}{ccc}
g_{1,1} & g_{1,-1} & g_{1,\overline{0}} \\
0 & g_{1,1}^{-1} & 0 \\
0 & g_{1,1}^{-1}g_{1,\overline{0}} & 1 
\end{array}\right), \]
respectively. 

Remind that every irreducible representation of $B^-$ (and/or $B^+$) is isomorphic to $K^a_l$, where
$l\in X(T)=\mathbb{Z}$ and $a\in\mathbb{Z}_2$. 

Let $H$ be an algebraic supergroup and $R$ be its connected subsupergroup.
Recall that $K[H]$ has a natural structure of a right $Dist(R)$-supermodule via $f\phi=\sum (-1)^{|\phi||f_2|}\phi(f_1)f_2$, where $\Delta(f)=\sum f_1\otimes f_2$
for $f\in K[H]$ and $\phi\in Dist(R)$.

Let $V$ be an one-dimensional $R$-supermodule given by a character $\chi\in X(R)$ such that 
$v\mapsto v\otimes\chi$ for $v\in V$.
\begin{lm}\label{induced}
The induced supermodule $ind^H_R V$ can be naturally identified with the $H$-subsupermodule of $K[H]$ consisting of those elements $f\in K[H]$ for which 
$f\phi=\phi(\chi)f$ for every $\phi\in Dist(R)$.
\end{lm}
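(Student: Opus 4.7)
The plan is to translate the definition of $\mathrm{ind}^H_R V$ from comodule language into the $Dist(R)$-language using the equivalence of categories between $R$-supermodules and $Dist(R)$-supermodules for connected $R$ (recalled in the preliminaries). By definition $\mathrm{ind}^H_R V$ consists of the $R$-invariants in $V \otimes K[H]$, where $R$ acts diagonally: on $V$ by the given character $\chi$ (so $v \mapsto v \otimes \chi$ as a right $K[R]$-supercomodule), and on $K[H]$ by restriction of the right regular $K[H]$-supercomodule along the Hopf map $\pi : K[H] \to K[R]$. Since $V = Kv$ is one-dimensional, the superspace map $v \otimes f \mapsto f$ is an isomorphism $V \otimes K[H] \to K[H]$, and the task reduces to characterizing those $f \in K[H]$ for which $v \otimes f$ is $R$-fixed.

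Next, I convert the $R$-fixed condition into a $Dist(R)$-condition. Because $R$ is assumed connected, an element is $R$-fixed iff the corresponding $Dist(R)$-module element is annihilated by $Dist(R)^+ = \ker \epsilon_{Dist(R)}$, i.e.\ satisfies $\phi \cdot x = \epsilon(\phi)x$ for all $\phi \in Dist(R)$. On the $V$-factor, the preliminary formula applied to $v \mapsto v \otimes \chi$ yields $\phi v = \phi(\chi) v$. On the $K[H]$-factor, one uses the stated right $Dist(R)$-action $f\phi = \sum (-1)^{|\phi||f_2|}\phi(f_1)f_2$. Computing the diagonal action on $v \otimes f$ via the comultiplication of $Dist(R)$ and using that $\chi$ is group-like in $K[R]$ (hence $\phi \mapsto \phi(\chi)$ is an algebra homomorphism $Dist(R) \to K$, which makes the prospective condition consistent with the right $Dist(R)$-module axioms), the fixed-point condition collapses to the identity $f\phi = \phi(\chi) f$ for all $\phi \in Dist(R)$.

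Finally, I verify the $H$-subsupermodule structure: the natural left $H$-action on $K[H]$ (from its Hopf superalgebra structure) commutes with the right $R$-action employed above (this is the standard fact that left and right translation commute in the super setting as well), so it preserves the invariance condition and the described subset is an $H$-subsupermodule of $K[H]$. The main obstacle is sign bookkeeping when unwinding the diagonal comodule on $V \otimes K[H]$ and passing to the $Dist(R)$ picture; the algebraic content is the direct super-analogue of Jantzen's standard characterization (cf.\ I.3 in \cite{jan}) of induction from a one-dimensional character.
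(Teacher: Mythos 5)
Your argument is correct and follows the paper's proof essentially step for step: realize $ind^H_R V$ inside $V\otimes K[H]$ by the $R$-equivariance condition, use $\dim V=1$ to identify it with a subspace of $K[H]$, and invoke connectedness of $R$ to convert the $K[R]$-comodule condition into the $Dist(R)$-eigenvector condition $f\phi=\phi(\chi)f$. The only wrinkle is a convention slip in your setup: the relevant coaction of $K[R]$ on $K[H]$ is the one landing in the \emph{first} tensor factor, $f\mapsto\sum\overline{f_1}\otimes f_2$ (which is what the right $Dist(R)$-action $f\phi=\sum(-1)^{|\phi||f_2|}\phi(f_1)f_2$ detects), rather than the ``right regular'' coaction you name, but this does not affect the substance of the argument.
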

\begin{proof}
Choose $v\in V\setminus 0$.
By definition, 
\[ind^H_R V=\{v\otimes f\in V\otimes K[H]\mid v\otimes\chi\otimes f=\sum v\otimes
\overline{f_1}\otimes f_2\},\] 
where $f\mapsto\overline{f}$ denotes the epimorphism $K[H]\to K[R]$.
It is clear that the map $v\otimes f\mapsto f$ induces an embedding of the $H$-supermodule $ind^H_R V$ into $K[H]$. Using this embedding, we identify
\[ind^H_R V=\{f\in K[H]\mid \chi\otimes f=\sum\overline{f_1}\otimes f_2\}.\] Since $R$ is connected, the last condition is equivalent to the statement of the lemma.
\end{proof}

Coming back to $SpO(2|1)$, we get
\[ind^G_{B^{\pm}} K_l^a=H_{\pm}^0(l)=\{f\in K[G]\mid f\phi=\phi(g_{1,1}^l) f  \text{ for every } \phi\in Dist(B^{\pm})\}.\]
Observe that in the notations of the previous section we have $H^0_-(l)=H^0_{< 1 >}(l)$ and $H^0_+(l)=H^0_{< -1 >}(l)$.

Let us consider the supermodule $H^0_-(l)$. The superalgebra $Dist(B^-)$ is generated by the elements $\bf{y}$, and $\left(\begin{array}{c} {\bf h} \\ t \end{array}\right), {\bf f}^{(t)}$ for $t\geq 0$.

The vector $f=\prod_{i, j\in\{1,-1\}}x_{i,j}^{k_{ij}}x_{1,\overline{0}}^{u}x_{-1,\overline{0}}^v$ is an element of the weight \[k=k_{11}+k_{12}+u-k_{21}-k_{22}-v\] since $f\left(\begin{array}{c} {\bf h} \\ t \end{array}\right)=\left(\begin{array}{c} k \\ t \end{array}\right)f$.

As indicated earlier, it will be easier to work with the generators $x_{1,1}'$, $x_{1,-1}'$, $x_{-1,1}$, $x_{-1,-1}$, $x_{1,\overline{0}}$ and $x_{-1,\overline{0}}$. The action of ${\bf f}^{(t)}$ and ${\bf y}$
on these generators is given as follows.

Since $(1+x_{1,\overline{0}}x_{-1,\overline{0}}){\bf f}^{(t)}=0$ for every $t\geq 1$, we obtain $x'_{1, j}{\bf f}^{(t)}=0$
and $x_{-1, j}{\bf f}^{(t)}=\delta_{t 1} x'_{1, j}(1-x_{1,\overline{0}}x_{-1,\overline{0}})$.
Furthermore, $(1+x_{1,\overline{0}}x_{-1,\overline{0}}){\bf y}=x_{1,\overline{0}}$ implies $x'_{1, j}{\bf y}=x'_{1, j}x_{1,\overline{0}}$. Since 
$x'_{1, j}x_{-1,\overline{0}}=x_{1, j}x_{-1,\overline{0}}$, we infer that $x_{-1, j}{\bf y}=-(x_{-1, j}x_{1,\overline{0}}-x'_{1 j}x_{-1,\overline{0}})$.

Let ${\bf k}$ denote a matrix
\[{\bf k}=\left(\begin{array}{cc}
k_{1,1} & k_{1,-1} \\
k_{-1,1} & k_{-1,1}\end{array}\right)\]
with non-negative integer coefficients, and $x^{\bf k}$ denote the element ${x'}_{1,1}^{k_{1,1}}{x'}_{1,-1}^{k_{1,-1}}x_{-1,1}^{k_{-1,1}}x_{-1,-1}^{k_{-1,-1}}$. 
For $i,j\in\{1,-1\}$ define by $\epsilon_{ij}$ that $2\times 2$-matrix whose only nonzero entry is $1$ and it appears at the position corresponding to $(i,j)$. 
Then the following general formula is valid.
\begin{equation}\label{star}
\begin{aligned}
x^{\bf k}{\bf y}&=(\sum_{j\in\{1,-1\}}k_{1, j})x^{\bf k} x_{1,\overline{0}} 
-\sum_{j\in\{1,-1\}}k_{-1,j}x^{{\bf k}-\epsilon_{-1, j}}(x_{-1, j}x_{1,\overline{0}}-x'_{1, j}x_{-1,\overline{0}})\\
&=(k_{1,1}+k_{1,-1}-k_{-1,1}-k_{1,-1})x^{\bf k} x_{1,\overline{0}}+(\sum_{j\in\{1,-1\}}k_{-1, j}x^{{\bf k}-\epsilon_{-1, j}+\epsilon_{1, j}})x_{-1,\overline{0}}.
\end{aligned}
\end{equation}
We identify the subalgebra $A=K[x'_{1,1}, x'_{1,-1}, x_{-1,1}, x_{-1,-1}]/<D-1>$ with $K[SL(2)]$. The Lie algebra of $SL(2)$ is generated by the elements ${\bf h}'=E_{11}-E_{-1,-1}$,
${\bf e}'=E_{1,-1}$ and ${\bf f}'=E_{-1,1}$.
Then ${\bf f}'$ acts on $A$ on the right by $x'_{1, j}{\bf f}'=0$ and $x_{-1, j}{\bf f}'=x'_{1 j}$, and
\[x^{\bf k} {\bf f}'=\sum_{j\in\{1,-1\}}k_{-1,j}x^{{\bf k}-\epsilon_{-1, j}+\epsilon_{1, j}}.\]

Every $f\in K[G]$ has a unique form $f=f_0+f_1 x_{1,\overline{0}}+f_{-1} x_{-1,\overline{0}} + f_{1,-1}x_{1,\overline{0}}x_{-1,\overline{0}}$, where the coefficients 
$f_0, f_1, f_{-1} , f_{1,-1}$ belong to the subalgebra $A$. Assume that $f\in H^0_-(l)$. By Lemma \ref{induced} it is the case if and only if $f$ has a weight $l$ and $f{\bf f}^{(t)}=f{\bf y}=0$ for $t\geq 1$.
\begin{lm}\label{nullspaceof y}
The element $f$ as above satisfies $f{\bf y}=0$ if and only if the following equations hold.
\begin{itemize}
\item $f_0 {\bf y}+f_{1,-1}x_{1,\overline{0}}=0$;
\item $f_{-1}=0$;
\item $(f_1 {\bf y})x_{1,\overline{0}}=0$.
\end{itemize}
\end{lm}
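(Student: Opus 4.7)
The plan is to compute $f\mathbf{y}$ explicitly in the canonical normal form $a_0 + a_1 x_{1,\overline{0}} + a_{-1} x_{-1,\overline{0}} + a_{1,-1} x_{1,\overline{0}} x_{-1,\overline{0}}$ with $a_i \in A$, then read off the condition $f\mathbf{y} = 0$ coefficient by coefficient and show that the resulting four scalar equations package exactly into the three conditions stated.

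I apply the odd right-Leibniz rule $(uv)\mathbf{y} = u(v\mathbf{y}) + (-1)^{|v|}(u\mathbf{y}) v$ to each of the four summands of $f$, using $f_i \in A$ (so $|f_i| = 0$) together with $x_{1,\overline{0}}\mathbf{y} = 0$ and $x_{-1,\overline{0}}\mathbf{y} = 1 + x_{1,\overline{0}} x_{-1,\overline{0}}$ from Lemma \ref{action}. The term $(f_{-1} x_{-1,\overline{0}})\mathbf{y}$ produces the explicit constant contribution $f_{-1}$, while $(f_{1,-1} x_{1,\overline{0}}x_{-1,\overline{0}})\mathbf{y}$ collapses to $f_{1,-1} x_{1,\overline{0}}$ because $x_{1,\overline{0}}^2 = 0$ kills the second piece of the Leibniz expansion.

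By formula (\ref{star}) each $f_i\mathbf{y}$ lies in $A x_{1,\overline{0}} + A x_{-1,\overline{0}}$; write $f_i \mathbf{y} = \alpha_i x_{1,\overline{0}} + \beta_i x_{-1,\overline{0}}$ with $\alpha_i,\beta_i\in A$. Using $x_{\pm 1,\overline{0}}^2 = 0$ and $x_{-1,\overline{0}} x_{1,\overline{0}} = -x_{1,\overline{0}} x_{-1,\overline{0}}$, everything collapses to normal form, and I expect to obtain $f\mathbf{y} = f_{-1} + (\alpha_0 + f_{1,-1}) x_{1,\overline{0}} + \beta_0 x_{-1,\overline{0}} + (\beta_1 + f_{-1} - \alpha_{-1}) x_{1,\overline{0}} x_{-1,\overline{0}}$. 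Setting this to zero yields four scalar conditions $f_{-1}=0$, $\alpha_0+f_{1,-1}=0$, $\beta_0=0$, $\beta_1+f_{-1}-\alpha_{-1}=0$. The first is condition (ii); the middle pair is equivalent to $f_0\mathbf{y}+f_{1,-1}x_{1,\overline{0}}=0$, i.e. condition (i); after using $f_{-1}=0$ (which forces $\alpha_{-1}=\beta_{-1}=0$), the last reduces to $\beta_1=0$, which is exactly $(f_1\mathbf{y})x_{1,\overline{0}}= -\beta_1 x_{1,\overline{0}}x_{-1,\overline{0}}=0$, i.e. condition (iii).

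The only real obstacle is sign bookkeeping: the odd Leibniz rule contributes a $(-1)^{|v|}$ each time $\mathbf{y}$ crosses an odd factor, and the graded commutation of $x_{\pm 1,\overline{0}}$ produces more signs when collecting terms. Once these are tracked carefully, the four coefficient equations group precisely into the three conditions of the lemma, with the fourth being absorbed into the third via $f_{-1}=0$. The reverse direction is immediate from the same formula for $f\mathbf{y}$.
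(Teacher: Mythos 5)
Your proposal is correct and follows essentially the same route as the paper: expand $f{\bf y}$ by the odd right-derivation rule using Lemma \ref{action}, use formula (\ref{star}) to place each $f_i{\bf y}$ in $Ax_{1,\overline{0}}+Ax_{-1,\overline{0}}$, and compare components in $A\oplus Ax_{1,\overline{0}}\oplus Ax_{-1,\overline{0}}\oplus Ax_{1,\overline{0}}x_{-1,\overline{0}}$. Your explicit coefficients $\alpha_i,\beta_i$ just make the paper's component argument more concrete, and your normal form and the absorption of the fourth equation into condition (iii) via $f_{-1}=0$ match the paper's conclusion.
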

\begin{proof}
We have 
\[f {\bf y}=f_0 {\bf y}-(f_1 {\bf y})x_{1,\overline{0}}+f_{-1} (1+x_{1,\overline{0}}x_{-1,\overline{0}})-(f_{-1} {\bf y})x_{-1,\overline{0}}+f_{1,-1}x_{1,\overline{0}}+
(f_{1,-1} {\bf y})x_{1,\overline{0}}x_{-1,\overline{0}}=0.
\]
The formula $(\ref{star})$ implies $(f_{1,-1} {\bf y})x_{1,\overline{0}}x_{-1,\overline{0}}=0$. 

Since all remaining summands belong to $Ax_{1,\overline{0}}\oplus Ax_{-1,\overline{0}}\oplus Ax_{1,\overline{0}}x_{-1,\overline{0}}$ except for $f_{-1}$, we derive that $f_{-1}=0$. Finally, the summand $(f_1 {\bf y})x_{1,\overline{0}}$ belongs to $Ax_{1,\overline{0}}x_{-1,\overline{0}}$ and the summand $(f_0 {\bf y}+f_{1,-1}x_{1,\overline{0}})$ belongs to $Ax_{1,\overline{0}}\oplus Ax_{-1,\overline{0}}$. The lemma follows.
\end{proof}

The equation $(f_1 {\bf y})x_{1,\overline{0}}=0$ is equivalent to $f_1{\bf f}'=0$. Furthermore, the equation $f_0 {\bf y}+f_{1,-1}x_{1,\overline{0}}=0$ implies $f_0 {\bf f}'=0$.

Since $x'_{1, j}{\bf f}^{(t)}=x'_{1, j}{\bf f}'^{(t)}$ and $x_{-1, j}{\bf f}^{(t)}=x_{-1, j}{\bf f}'^{(t)}(1-x_{1,\overline{0}}x_{-1,\overline{0}})$, we have 
\begin{equation}\label{sstar}
\begin{aligned}
x^{\bf k} {\bf f}^{(t)}=x^{\bf k} {\bf f}'^{(t)}(1-x_{1,\overline{0}}x_{-1,\overline{0}})^t.
\end{aligned}
\end{equation}
Aplying formula (\ref{sstar}) to the element $f=f_0 +f_1 x_{1,\overline{0}}+f_{1,-1} x_{1,\overline{0}}x_{-1,\overline{0}}$ we obtain
\[f {\bf f}^{(t)}=(f_0 {\bf f}'^{(t)})(1-t x_{1,\overline{0}}x_{-1,\overline{0}}) +(f_1 {\bf f}'^{(t)})x_{1,\overline{0}}+
(f_{1,-1} {\bf f}'^{(t)})x_{1,\overline{0}}x_{-1,\overline{0}}=0.
\]
Thus $f_0 {\bf f}'^{(t)}=f_1 {\bf f}'^{(t)}=f_{1,-1} {\bf f}'^{(t)}=0$ for all $t\geq 1$.
The latter equations mean that $f_0, f_{1,-1}$ and $f_1$ belong to the induced modules $ind^{SL(2)}_{B'^{-}} K_l$ and $ind^{SL(2)}_{B'^{-}} K_{l-1}$ respectively, where $B'^{-}$ is a Borel subgroup of $SL(2)$ consisting of all lower triangular matrices.

The structure of arbitrary $ind^{SL(2)}_{B'^{-}} K_a$ is well known (cf. \cite{jan}, page ix). As a submodule of $K[SL(2)]$ it has a basis consisting of elements ${x'}_{1,1}^i {x'}_{1,-1}^{a-i}$ for 
$0\leq i\leq a$. The following proposition is now evident.
\begin{pr}\label{descriptionofinduced}
The supermodule $H^0_-(l)$ has a basis consisting of elements
\[{x'}_{1,1}^i {x'}_{1,-1}^{l-i}(1-lx_{1,\overline{0}}x_{-1,\overline{0}})=x_{1,1}^i x_{1,-1}^{l-i}, \ {x'}_{1,1}^j {x'}_{1,-1}^{l-1-j}x_{1,\overline{0}}=
x_{1,1}^j x_{1,-1}^{l-1-j}x_{1,\overline{0}},\]
where $0\leq i\leq l$ and $0\leq j\leq l-1$.
\end{pr}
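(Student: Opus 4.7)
The plan is to piece together the data already accumulated just above the proposition. Start from the unique decomposition
\[f = f_0 + f_1 x_{1,\overline{0}} + f_{-1} x_{-1,\overline{0}} + f_{1,-1} x_{1,\overline{0}} x_{-1,\overline{0}}, \quad f_0, f_1, f_{-1}, f_{1,-1} \in A \simeq K[SL(2)],\]
of an arbitrary $f \in K[G]$. By Lemma \ref{induced}, $f \in H_-^0(l)$ iff $f$ has weight $l$ and is annihilated by $\mathbf{y}$ and every $\mathbf{f}^{(t)}$, $t \ge 1$. Lemma \ref{nullspaceof y} gives $f_{-1}=0$, $f_1 \mathbf{f}' = 0$, and the coupling relation $f_0 \mathbf{y} + f_{1,-1} x_{1,\overline{0}} = 0$, while the preceding $\mathbf{f}^{(t)}$-computation forces $f_0, f_1, f_{1,-1}$ to be annihilated by every $\mathbf{f}'^{(t)}$, hence to lie in the $SL(2)$-induced modules.

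Next I would use the weight grading. Since $x_{1,\overline{0}}$ and $x_{-1,\overline{0}}$ have weights $+1$ and $-1$, the condition $\mathrm{wt}(f) = l$ forces $f_0 \in \mathrm{ind}^{SL(2)}_{B'^-} K_l$, $f_1 \in \mathrm{ind}^{SL(2)}_{B'^-} K_{l-1}$, and $f_{1,-1} \in \mathrm{ind}^{SL(2)}_{B'^-} K_l$. The standard basis of $\mathrm{ind}^{SL(2)}_{B'^-} K_a$ is $\{{x'}_{1,1}^i {x'}_{1,-1}^{a-i} \mid 0 \le i \le a\}$, so $f_0$ and $f_{1,-1}$ have $l+1$ degrees of freedom each, and $f_1$ has $l$ degrees of freedom. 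The coupling relation will be shown to eliminate the freedom in $f_{1,-1}$.

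The key local computation is the action of $\mathbf{y}$ on $f_0$. Since every monomial appearing in $f_0$ has $k_{-1,1} = k_{-1,-1} = 0$ and $k_{1,1}+k_{1,-1} = l$, formula \eqref{star} collapses to $f_0 \mathbf{y} = l\, f_0 x_{1,\overline{0}}$. Substituting into $f_0 \mathbf{y} + f_{1,-1} x_{1,\overline{0}} = 0$ gives $f_{1,-1} = -l f_0$. Therefore
\[f = f_0 \bigl(1 - l\, x_{1,\overline{0}} x_{-1,\overline{0}}\bigr) + f_1 x_{1,\overline{0}},\]
and a basis of $H^0_-(l)$ is obtained by letting $f_0$ and $f_1$ run over the standard $SL(2)$-bases; the two families are linearly independent because the first lives in $A \oplus A x_{1,\overline{0}} x_{-1,\overline{0}}$ and the second in $A x_{1,\overline{0}}$.

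The last task is to verify the alternate closed forms. Here I would use $x_{1,\overline{0}}^2 = x_{-1,\overline{0}}^2 = 0$, which gives $(x_{1,\overline{0}} x_{-1,\overline{0}})^2 = 0$; expanding $x'_{1,j} = x_{1,j}(1 + x_{1,\overline{0}} x_{-1,\overline{0}})$ then yields ${x'}_{1,1}^i {x'}_{1,-1}^{l-i} = x_{1,1}^i x_{1,-1}^{l-i}(1 + l\, x_{1,\overline{0}} x_{-1,\overline{0}})$, from which both identities in the statement are immediate. There is no serious obstacle: the only calculation that needs a bit of care is the collapse $f_0 \mathbf{y} = l f_0 x_{1,\overline{0}}$ via \eqref{star}, which is what ties $f_{1,-1}$ rigidly to $f_0$ and explains the unexpected factor $1 - l x_{1,\overline{0}} x_{-1,\overline{0}}$ in the basis.
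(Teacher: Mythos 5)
Your argument is correct and follows the same route the paper takes: the paper's "proof" is exactly the discussion preceding the proposition (Lemma \ref{induced}, Lemma \ref{nullspaceof y}, formulas (\ref{star}) and (\ref{sstar}), and the known basis of $ind^{SL(2)}_{B'^-}K_a$), after which it declares the statement evident. You have merely made explicit the step the paper leaves implicit, namely that $f_0{\bf y}=l f_0 x_{1,\overline{0}}$ forces $f_{1,-1}=-l f_0$, which is precisely where the factor $1-l x_{1,\overline{0}}x_{-1,\overline{0}}$ comes from.
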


The highest weight vector of $H^0_-(l)$ equals $x_{1,1}^l$. Since $G$ is connected, the simple socle $L_-(l)$ of $H^0_-(l)$ coincides with $Dist(G)x_{1,1}^l=Dist(U^-)x_{1,1}^l$, where $U^-$ is the unipotent radical of $B^-$. The superalgebra $Dist(U^-)$ has a basis consisting of elements  ${\bf y}$, ${\bf f}^{(t)}$ and ${\bf f}^{(t)} {\bf y}$ for $t\geq 1$.
\begin{lm}\label{soclewrtlower}
If $p|l$, then $L_-(l)$ has a basis consisting of elements $x_{1,1}^i x_{1,-1}^{l-i}$ such that $\binom{l}{i}$ is not divisible by $p$.

If $p\not|l$, then a basis of $L_-(l)$ is given by the elements $x_{1,1}^i x_{1,-1}^{l-i}$ such that $\binom{l}{i}$ is not divisible by $p$ together with elements 
of the form $x_{1,1}^j x_{1,-1}^{l-1-j}x_{1,\overline{0}}$ such that $p$ does not divide $\binom{l-1}{j}$.
\end{lm}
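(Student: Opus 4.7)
The plan is to compute $L_-(l)=\mathrm{Dist}(U^-)\cdot x_{1,1}^l$ by applying each element of the declared basis $\{1,\mathbf{y},\mathbf{f}^{(t)},\mathbf{f}^{(t)}\mathbf{y}:t\ge 1\}$ of $\mathrm{Dist}(U^-)$ to $x_{1,1}^l$ using the formulas of Lemma~\ref{leftaction}. Since Proposition~\ref{descriptionofinduced} already exhibits the monomials $x_{1,1}^i x_{1,-1}^{l-i}$ and $x_{1,1}^j x_{1,-1}^{l-1-j}x_{1,\overline{0}}$ as a $K$-basis of the ambient $H^0_-(l)$, linear independence of the outputs is automatic, and the real content of the lemma is to identify which output coefficients are nonzero modulo $p$.

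For the even generator, I would use the divided-power Leibniz rule: the iterated coproduct of $\mathbf{f}^{(t)}$ applied to an $l$-fold product expands as $\mathbf{f}^{(t)}(x_{1,1}^l)=\sum_{t_1+\cdots+t_l=t}(\mathbf{f}^{(t_1)}x_{1,1})\cdots(\mathbf{f}^{(t_l)}x_{1,1})$. By Lemma~\ref{leftaction}, each factor forces $t_i\in\{0,1\}$, so exactly $\binom{l}{t}$ terms survive, each equal to $x_{1,1}^{l-t}x_{1,-1}^t$; hence $\mathbf{f}^{(t)}x_{1,1}^l=\binom{l}{t}x_{1,1}^{l-t}x_{1,-1}^t$. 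For the odd generator, $\mathbf{y}$ acts as a left odd superderivation with $\mathbf{y}x_{1,1}=x_{1,\overline{0}}$, and since $x_{1,1}$ is even the super-Leibniz rule yields $\mathbf{y}x_{1,1}^l=l\,x_{1,1}^{l-1}x_{1,\overline{0}}$. Applying $\mathbf{f}^{(t)}$ afterward and using $\mathbf{f}^{(s)}x_{1,\overline{0}}=0$ for $s\ge 1$ then gives $\mathbf{f}^{(t)}\mathbf{y}\,x_{1,1}^l=l\binom{l-1}{t}x_{1,1}^{l-1-t}x_{1,-1}^t x_{1,\overline{0}}$.

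The lemma then follows by case analysis on whether $p\mid l$. In the first case, the scalar factor $l$ kills every vector containing $x_{1,\overline{0}}$, leaving exactly the $x_{1,1}^i x_{1,-1}^{l-i}$ with $\binom{l}{i}\not\equiv 0\pmod p$ (after re-indexing $i=l-t$). In the second case the factor $l$ is invertible, so both families survive, with the surviving elements characterized by $p\nmid\binom{l}{i}$ and $p\nmid\binom{l-1}{j}$ respectively. The only mildly technical step is the divided-power Leibniz computation for $\mathbf{f}^{(t)}$; once that formula is in hand, the rest is direct verification.
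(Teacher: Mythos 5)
Your proposal is correct and follows essentially the same route as the paper: the paper's proof consists precisely of the three formulas ${\bf y}x_{1,1}^l=lx_{1,1}^{l-1}x_{1,\overline{0}}$, ${\bf f}^{(t)}x_{1,1}^l=\binom{l}{t}x_{1,1}^{l-t}x_{1,-1}^t$ and ${\bf f}^{(t)}{\bf y}x_{1,1}^l=l\binom{l-1}{t}x_{1,1}^{l-1-t}x_{1,-1}^tx_{1,\overline{0}}$, applied to the basis $1,{\bf y},{\bf f}^{(t)},{\bf f}^{(t)}{\bf y}$ of $Dist(U^-)$ acting on the highest weight vector, exactly as you do. Your derivation of these formulas from the divided-power Leibniz rule and Lemma \ref{leftaction}, and the subsequent case split on $p\mid l$, fills in the verification the paper leaves implicit.
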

\begin{proof}
Use 
\[{\bf y}x_{1,1}^l=lx_{1,1}^{l-1}x_{1,\overline{0}}, \ {\bf f}^{(t)}(x_{1,1}^l)=\binom{l}{t}x_{1,1}^{l-t}x_{1,-1}^t\] 
and
\[{\bf f}^{(t)} {\bf y}(x_{1,1}^l)=l\binom{l-1}{t}x_{1,1}^{l-1-t}x_{1,-1}^t x_{1,\overline{0}}.\]
\end{proof}

The following statement is analogous to Proposition \ref{descriptionofinduced}.
\begin{pr}\label{descriptionofinduced+}
The induced supermodule $H^0_+(-l)$ has a basis consisting of elements 
$x_{-1,-1}^i x_{-1,1}^{l-i-\epsilon}x_{-1,\overline{0}}^{\epsilon}$, where $\epsilon=0, 1$ and $0\leq i\leq l-\epsilon$. 
\end{pr}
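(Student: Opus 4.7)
The plan is to imitate the derivation of Proposition \ref{descriptionofinduced} with the roles of $B^-$ and $B^+$ (and correspondingly of the pairs $({\bf f},{\bf y})$ and $({\bf e},{\bf x})$) exchanged. By Lemma \ref{induced}, $H^0_+(-l)$ consists of those $f\in K[G]$ of right weight $-l$ satisfying $f\,{\bf e}^{(t)}=0$ for all $t\geq 1$ and $f\,{\bf x}=0$, since these elements together with the binomial polynomials $\binom{{\bf h}}{t}$ generate $Dist(B^+)$ as a superalgebra. As a preliminary step I would compute the right action of ${\bf e}$, ${\bf e}^{(t)}$, and ${\bf x}$ on the generators of $K[G]$, obtaining formulas symmetric to those of Lemma \ref{action}: $x_{i,j}{\bf e}=\delta_{i,1}x_{-1,j}$, $x_{i,\overline{0}}{\bf e}=\delta_{i,1}x_{-1,\overline{0}}$, ${\bf e}^{(t)}$ annihilates all generators for $t\geq 2$, and an analogous formula for $\bf x$ naturally expressed via the symmetric odd generators $x_{\overline{0},\pm 1}$.

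Next, using the second (symmetric) presentation of $K[G]$ from Lemma \ref{simplifiedform}, I would modify the generators by setting $x''_{-1,-1}=x_{-1,-1}(1+x_{\overline{0},1}x_{\overline{0},-1})$ and $x''_{-1,1}=x_{-1,1}(1+x_{\overline{0},1}x_{\overline{0},-1})$. This yields a decomposition $K[G]\simeq K[SL(2)]\otimes K[x_{\overline{0},1},x_{\overline{0},-1}]$, in which the $SL(2)$-subalgebra is generated by $x_{1,1}$, $x_{1,-1}$, $x''_{-1,1}$, $x''_{-1,-1}$. Every $f\in K[G]$ then has a unique expansion $f=f_0+f_1 x_{\overline{0},-1}+f_{-1}x_{\overline{0},1}+f_{1,-1}x_{\overline{0},1}x_{\overline{0},-1}$ with coefficients in this $SL(2)$-subalgebra. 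An argument mirroring Lemma \ref{nullspaceof y} then shows that $f\,{\bf x}=0$ forces one of the odd components to vanish and imposes explicit relations among the rest; combined with $f\,{\bf e}^{(t)}=0$, this reduces to the statement that the surviving components lie in the induced $SL(2)$-submodules $ind^{SL(2)}_{B'^{+}}K_{-l}$ and $ind^{SL(2)}_{B'^{+}}K_{-l+1}$, where $B'^{+}$ is the upper-triangular Borel of $SL(2)$.

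The classical description of these $SL(2)$-modules gives bases $x_{-1,-1}^i x_{-1,1}^{l-i}$ for $0\leq i\leq l$ and $x_{-1,-1}^i x_{-1,1}^{l-1-i}$ for $0\leq i\leq l-1$, respectively. Assembling the two pieces, with the latter multiplied by $x_{-1,\overline{0}}$, and re-expressing everything in terms of the original generators using the relations $x_{-1,\overline{0}}=-x_{-1,-1}x_{\overline{0},1}+x_{-1,1}x_{\overline{0},-1}$ from Lemma \ref{simplifiedform} produces precisely the elements $x_{-1,-1}^i x_{-1,1}^{l-i-\epsilon}x_{-1,\overline{0}}^{\epsilon}$ of the statement. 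The main obstacle is bookkeeping: correctly identifying which odd component of $f$ vanishes under $f\,{\bf x}=0$ and translating cleanly from the modified generators $x''_{-1,\cdot}$ back to the original $x_{-1,\cdot}$ and $x_{-1,\overline{0}}$ so that the stated form is recognizable. Once this reduction is in place, the argument mirrors the proof of Proposition \ref{descriptionofinduced} line by line.
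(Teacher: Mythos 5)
Your overall plan (mirror the proof of Proposition \ref{descriptionofinduced} with $({\bf e},{\bf x})$ in place of $({\bf f},{\bf y})$) is the paper's intended route, but your specific coordinate choice breaks the key step. The conditions of Lemma \ref{induced} involve the \emph{right} action of $Dist(B^+)$, which (as in Lemma \ref{action}) acts on the row indices of the matrix coefficients. The odd generators $x_{\overline{0},\pm 1}$ of the second presentation in Lemma \ref{simplifiedform} sit in the row $\overline{0}$ and are adapted to the left action, not to this right action: one computes $x_{\overline{0},1}{\bf x}=\pm x_{-1,1}$ and $x_{\overline{0},-1}{\bf x}=\pm x_{-1,-1}$, so \emph{both} odd coordinates are sent to nonzero even elements. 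Consequently, in your expansion $f=f_0+f_1x_{\overline{0},-1}+f_{-1}x_{\overline{0},1}+f_{1,-1}x_{\overline{0},1}x_{\overline{0},-1}$ the even component of $f{\bf x}=0$ is a coupling relation of the form $\pm f_1x_{-1,-1}\pm f_{-1}x_{-1,1}=0$, not the vanishing of a single odd component; there is no analogue of the constant term that forces $f_{-1}=0$ in Lemma \ref{nullspaceof y}. A concrete counterexample to your claimed intermediate step: $x_{-1,\overline{0}}=-x_{-1,-1}x_{\overline{0},1}+x_{-1,1}x_{\overline{0},-1}$ lies in $H^0_+(-1)$ and has both odd components nonzero in your coordinates. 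So the argument does not ``mirror line by line''; one would instead have to solve the coupled system (a divisibility argument in $K[SL(2)]$), and the final translation via $x_{-1,\overline{0}}=-x_{-1,-1}x_{\overline{0},1}+x_{-1,1}x_{\overline{0},-1}$ is not a mere change of basis of the odd part, since its coefficients are non-constant.

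The paper's proof avoids all of this by keeping the \emph{same} odd generators $x_{1,\overline{0}},x_{-1,\overline{0}}$ as in Proposition \ref{descriptionofinduced} and modifying only the even ones, $x'_{-1,\pm 1}=x_{-1,\pm 1}(1+x_{1,\overline{0}}x_{-1,\overline{0}})$. With this choice $x_{-1,\overline{0}}$ is killed by both ${\bf e}^{(t)}$ and ${\bf x}$, while $x_{1,\overline{0}}{\bf e}=x_{-1,\overline{0}}$ and $x_{1,\overline{0}}{\bf x}=1+x_{1,\overline{0}}x_{-1,\overline{0}}$; the constant term now appears in the $x_{1,\overline{0}}$-direction, so the analogue of Lemma \ref{nullspaceof y} forces the $x_{1,\overline{0}}$-component of $f$ to vanish, the remaining components land in $ind^{SL(2)}_{B'^+}K_{-l}$ and $ind^{SL(2)}_{B'^+}K_{-l+1}$, and the basis comes out directly in the stated form $x_{-1,-1}^ix_{-1,1}^{l-i-\epsilon}x_{-1,\overline{0}}^{\epsilon}$ because $x_{-1,\overline{0}}$ is already one of the chosen coordinates. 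If you redo your computation with these generators, your outline becomes correct and is exactly the paper's argument.
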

\begin{proof}
We can work with elements $x_{1,1}, x_{1,-1}$, $x'_{-1,1}= x_{-1,1}(1+x_{1,\overline{0}}x_{-1,\overline{0}})$ and $x'_{-1,-1}=x_{-1,-1}(1+x_{1,\overline{0}}x_{-1,\overline{0}})$ and rephrase our previous arguments. We leave the details to the reader. 
\end{proof}

Moreover, the simple socle $L_+(-l)$ of $H^0_+(-l)$ coincides with $Dist(U^+)x_{-1,-1}^l$ and 
the superalgebra $Dist(U^-)$ has a basis consisting of elements  ${\bf x}$, ${\bf e}^{(t)}$ and ${\bf e}^{(t)} {\bf x}$ for $t\geq 1$.
We leave it for the reader to verify the following statement.

\begin{lm}\label{soclewrtupper}
If $p|l$, then $L_+(-l)$ has a basis consisting of elements $x_{-1,-1}^i x_{-1,1}^{l-i}$ such that $\binom{l}{i}$ is not divisible by $p$.

If $p\not|l$, then a basis of $L_+(-l)$ is given by the elements $x_{-1,-1}^i x_{-1,1}^{l-i}$ such that $\binom{l}{i}$ is not divisible by $p$ together with  
elements of the form $x_{-1,-1}^j x_{-1,1}^{l-1-j}x_{-1,\overline{0}}$ such that $p$ does not divide $\binom{l-1}{j}$.
\end{lm}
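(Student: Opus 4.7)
The plan is to mirror the proof of Lemma \ref{soclewrtlower}, exploiting the symmetry between $B^-$ and $B^+$ (and between the flags $\mathcal F^+=\langle 1\rangle$ and $\mathcal F^-=\langle -1\rangle$). The essential input is already in place: by the discussion immediately preceding the statement, $L_+(-l)=\mathrm{Dist}(U^+)x_{-1,-1}^l$, and $\mathrm{Dist}(U^+)$ has a basis consisting of $1$, $\mathbf{e}^{(t)}$, $\mathbf{x}$, and $\mathbf{e}^{(t)}\mathbf{x}$ for $t\geq 1$. Thus I would just apply each of these operators to the highest (with respect to $\mathcal F^-$) weight vector $x_{-1,-1}^l$ and read off which images are nonzero modulo $p$.

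Concretely, from Lemma \ref{leftaction} I have $\mathbf{e}\,x_{-1,-1}=x_{-1,1}$ and $\mathbf{e}\,x_{-1,\overline 0}=0$, so viewing $\mathbf{e}^{(t)}$ as acting via the coproduct (as an even ``divided power'' derivation) yields
\[
\mathbf{e}^{(t)}(x_{-1,-1}^l)=\binom{l}{t}x_{-1,-1}^{l-t}x_{-1,1}^{t}.
\]
Likewise $\mathbf{x}\,x_{-1,-1}=-x_{-1,\overline{0}}$, so treating $\mathbf{x}$ as an odd (super)derivation gives
\[
\mathbf{x}(x_{-1,-1}^l)=-l\,x_{-1,-1}^{l-1}x_{-1,\overline{0}},
\]
and since $\mathbf{e}^{(t)}$ annihilates $x_{-1,\overline{0}}$, a further application of $\mathbf{e}^{(t)}$ produces
\[
\mathbf{e}^{(t)}\mathbf{x}(x_{-1,-1}^l)=-l\binom{l-1}{t}x_{-1,-1}^{l-1-t}x_{-1,1}^{t}x_{-1,\overline{0}}.
\]
These are the direct analogues of the three formulas displayed in the proof of Lemma \ref{soclewrtlower}.

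Having these formulas, I split into the two cases exactly as before. If $p\mid l$, the last two families vanish identically, so $L_+(-l)$ is spanned by those $x_{-1,-1}^{l-i}x_{-1,1}^{i}$ with $\binom{l}{i}\not\equiv 0\pmod p$. If $p\nmid l$, one additionally obtains the vectors $x_{-1,-1}^{l-1-j}x_{-1,1}^{j}x_{-1,\overline{0}}$ for which $\binom{l-1}{j}\not\equiv 0\pmod p$. In both cases the listed vectors lie in the basis of $H^0_+(-l)$ described in Proposition \ref{descriptionofinduced+}, so they are automatically linearly independent, and by construction they span $\mathrm{Dist}(U^+)x_{-1,-1}^l=L_+(-l)$.

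There is essentially no obstacle: the only thing to double-check carefully is the sign and the odd-derivation behaviour of $\mathbf{x}$ on $x_{-1,-1}^l$, together with the fact that $\mathbf{e}^{(t)}$ and $\mathbf{x}$ commute in their action on $x_{-1,-1}^l$ up to the factors above (because $\mathbf{e}^{(t)}$ kills $x_{-1,\overline 0}$). Since the authors explicitly delegate this verification to the reader, the proof can be given in a single line by pointing to the symmetry with Lemma \ref{soclewrtlower} and recording the three formulas above.
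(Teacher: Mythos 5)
Your proposal is correct and is exactly the verification the paper delegates to the reader: it mirrors the proof of Lemma \ref{soclewrtlower}, using $L_+(-l)=Dist(U^+)x_{-1,-1}^l$ with the basis $1,\ {\bf x},\ {\bf e}^{(t)},\ {\bf e}^{(t)}{\bf x}$ of $Dist(U^+)$ and the action formulas of Lemma \ref{leftaction}, and your three displayed formulas (signs included) match the symmetric ones in the paper. The case split on $p\mid l$ versus $p\nmid l$ and the linear independence via Proposition \ref{descriptionofinduced+} complete the argument as intended.
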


\section{Morphisms between induced supermodules of $SpO(2|1)$}

Frobenius reciprocity law implies
\[Hom_G(H^0_+(-k), H^0_-(l))\simeq Hom_{B^-}(H_+^0(-k), K_{l})=\]
\[Hom_T(H_+^0(-k)/rad_{B^-} H_+^0(-k), K_l),\]
where $Hom$ stands for all homomorphisms, not only even ones.

If $v\in rad_{B^-} V=Dist(U^-)^+V$, then $v$ equals either ${\bf y}v'$ or ${\bf f}^{(t)}v'$ for an element $v'\in V$. Therefore it is enough to use the formulae
\[{\bf f}^{(t)}x_{-1,-1}^i x_{-1,1}^{k-i-\epsilon}x_{-1,\overline{0}}^{\epsilon}=\binom{k-i-\epsilon}{t}x_{-1,-1}^{i+t} x_{-1,1}^{k-i-\epsilon-t}x_{-1,\overline{0}}^{\epsilon},\]
\[{\bf y}x_{-1,-1}^i x_{-1,1}^{k-i-\epsilon}x_{-1,\overline{0}}^{\epsilon}=
(k-i-\epsilon)x_{-1,-1}^i x_{-1,1}^{k-i-1-\epsilon}x_{-1,\overline{0}}^{\epsilon+1}+\epsilon x_{-1,-1}^{i+1} x_{-1,1}^{k-i-\epsilon}\]
to verify that if $k> 0$, then the component of $rad_{B^-} H^0_+(-k)=Dist(U^-)^+ H^0_+(-k)$ belonging to $A$ is spanned by the elements
$x_{-1,-1}^i x_{-1,1}^{k-i}$ for $1\leq i \leq k$ (since ${\bf y}x_{-1,-1}^i x_{-1,1}^{k-i-1}x_{-1,\overline{0}}$ $= x_{-1,-1}^{i+1} x_{-1,1}^{k-i-1}$) 
and the component of $rad_{B^-} H^0_+(-k)$ belonging to 
$Ax_{-1,\overline{0}}$ is spanned by $k x_{-1,1}^{k-1}x_{-1,\overline{0}}$ and the following multiples of $x_{-1,-1}^j x_{-1,1}^{k-1-j}x_{-1,\overline{0}}$ 
\[\{\binom{k-j}{1}x_{-1,-1}^j x_{-1,1}^{k-1-j}x_{-1,\overline{0}}, \ldots, \binom{k-j+t-1}{t}x_{-1,-1}^j x_{-1,1}^{k-1-j}x_{-1,\overline{0}}, \ldots, \]
\[\binom{k-1}{j}x_{-1,-1}^j x_{-1,1}^{k-1-j}x_{-1,\overline{0}}\}\]
for each $1\leq j \leq k-1$.

We will denote by $p^{a(l)}$ the exact power of $p$ dividing $l$, and write $p^{a(l)}||l$.

\begin{lm}\label{Fero's responsibility}
All elements of the set $\{\binom{k-j}{1}, \ldots, \binom{k-j+t-1}{t}, \ldots, \binom{k-1}{j}\}$
are divisible by $p$ if and only if $j<p^{a(k-j)}$.
\end{lm}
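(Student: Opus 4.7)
The natural tool here is Kummer's theorem: for integers $a,b\ge 0$ the $p$-adic valuation $v_p\!\bigl(\binom{a+b}{a}\bigr)$ equals the number of carries that occur when adding $a$ and $b$ in base $p$. Since
\[
\binom{k-j+t-1}{t}=\binom{(k-j-1)+t}{t},
\]
the lemma becomes the combinatorial statement: adding $k-j-1$ and $t$ in base $p$ produces at least one carry for every $t=1,2,\dots,j$ if and only if $j<p^{a(k-j)}$.

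Set $m=k-j$ and write $m=p^{a(m)}m'$ with $\gcd(m',p)=1$; let $n=m-1$. The first thing to do is read off the low-order base-$p$ digits of $n$. The expansion of $m$ has zero in positions $0,\dots,a(m)-1$ and its digit at position $a(m)$ is $m'_0\in\{1,\dots,p-1\}$, the unit digit of $m'$. Subtracting $1$ therefore forces the last $a(m)$ digits of $n$ to equal $p-1$, and turns position $a(m)$ into $m'_0-1$, while the higher digits of $n$ agree with those of $m'$ shifted up by $a(m)$ positions. This description of $n$ is the sole structural input needed.

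Now the two directions fall out immediately. For the "if" direction, assume $j<p^{a(m)}$ and fix any $1\le t\le j$. Then $t<p^{a(m)}$, so all nonzero digits of $t$ lie in positions $i<a(m)$; but the digit of $n$ at any such position is $p-1$, so at least one carry occurs at the lowest nonzero digit of $t$. By Kummer, $p\mid\binom{n+t}{t}$. For the "only if" direction, assume $j\ge p^{a(m)}$ and take the single value $t=p^{a(m)}$. The low $a(m)$ digits of $t$ are zero, producing no carries in those positions, and the digit sum at position $a(m)$ is $(m'_0-1)+1=m'_0\le p-1$, again producing no carry; all higher digits of $t$ are zero. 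Thus $\binom{n+t}{t}$ is not divisible by $p$, exhibiting a member of the set that fails divisibility.

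I do not anticipate a serious obstacle; the only delicate step is the base-$p$ accounting for $m-1$, which is precisely what makes the threshold $p^{a(k-j)}$ appear.
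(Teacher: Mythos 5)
Your proof is correct, and its core is the same as the paper's: both directions rest on Kummer's theorem, and your witness $t=p^{a(k-j)}$ for the ``only if'' direction, together with the observation that adding it to $k-j-1$ produces no base-$p$ carry, is exactly the computation in the paper. The one place you diverge is the ``if'' direction: the paper avoids digit bookkeeping there and instead uses the identity $\binom{k-j+t-1}{t}=\frac{k-j}{t}\binom{k-j+t-1}{t-1}$ together with $p^{a(k-j)}\,\|\,(k-j)$ and $t<p^{a(k-j)}$ to force $p$ into the quotient $\frac{k-j}{t}$, whereas you note that the last $a(k-j)$ digits of $k-j-1$ are all $p-1$, so any $t<p^{a(k-j)}$ creates a carry at its lowest nonzero digit. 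Both arguments are sound; yours has the mild advantage of treating the two directions uniformly through the carry criterion, while the paper's valuation identity sidesteps the need to describe the digits of $k-j-1$ at all.
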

\begin{proof}
By Kummer's theorem, the maximal power of $p$ that divides $\binom{n}{m}$ equals the number of carries when $m$ is added to $n-m$ in the $p$-adic base.
Let $t=p^{a(k-j)}$ and $k-j=p^{a(k-j)}l$, where $p$ does not divide $l$.
If we add $t=p^{a(k-j)}$ to $k-j-1=(l-1)p^{a(k-j)}+(p^{a(k-j)}-1)$ in the $p$-adic base, then there is no carry. This means that $\binom{k-j+p^{a(k-j)}-1}{p^{a(k-j)}}$ is not divisible by $p$. 

To prove the converse statement we use induction on $t$. First, it is clear for $t=1$. If $t>1$, then $\binom{k-j+t-1}{t}=\binom{k-j+t-1}{t-1}\frac{k-j}{t}$ and since $p^{a(k-j)}||(k-j)$ and $1\leq t  < p^{a(k-j)}$, the second factor is divisible by $p$.

\end{proof}

Using Lemma \ref{Fero's responsibility} we conclude that if $k>0$ then the basis of $rad_{B^-} H^0_+(-k)$ is given 
by elements $x_{-1,-1}^i x_{-1,1}^{k-i}$ for $1\leq i \leq k$, by $x_{-1,1}^{k-1}x_{-1,\overline{0}}$ if $k$ is not divisible by $p$, 
and by elements $x_{-1,-1}^j x_{-1,1}^{k-1-j}x_{-1,\overline{0}}$ for $1\leq j\leq k-1$ such that $j\geq p^{a(k-j)}$.

Finally, if $k=0$, then $H^0_+(0)=K$ and $rad_{B^-} H^0_+(-k)$ vanishes.

The following lemma is now evident.
\begin{lm}\label{nontrivialmorphism}
Assume $k> 0$. 

If $p|k$, then $Hom_G(H^0_+(-k), H^0_-(l))\neq 0$ if and only if $l=k$ or $l=k-2j-1$, where $0\leq j\leq [\frac{k-1}{2}]$ and $j<p^{a(k-j)}$.

If $p\not| k$, then $Hom_G(H^0_+(-k), H^0_-(l))\neq 0$ if and only if $l=k$ or $l=k-2j-1$, where $1\leq j\leq [\frac{k-1}{2}]$ and $j<p^{a(k-j)}$.

If $k=0$, then $Hom_G(H^0_+(0), H^0_-(l))\neq 0$ if and only if $l=0$.

Additionally, if $Hom_G(H^0_+(-k), H^0_-(l))\neq 0$, then the dimension of this space of homomorphisms is one. This space is an odd superspace in all cases except when $l=k$.
\end{lm}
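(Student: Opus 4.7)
The plan is to combine Frobenius reciprocity with the explicit description of $rad_{B^-} H^0_+(-k)$ derived in the paragraphs just before the lemma. The displayed isomorphism
\[Hom_G(H^0_+(-k), H^0_-(l))\simeq Hom_T(H^0_+(-k)/rad_{B^-} H^0_+(-k), K_l)\]
stated at the start of the section reduces the problem to reading off the multiplicity and parity of the weight $l$ in the quotient $Q:=H^0_+(-k)/rad_{B^-} H^0_+(-k)$.

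From that description, which relies on Lemma \ref{Fero's responsibility} to detect the relevant binomial congruences, a basis of $Q$ is formed by: the even residue of $x_{-1,1}^k$ of weight $k$, present unconditionally; the odd residue of $x_{-1,1}^{k-1}x_{-1,\overline{0}}$ of weight $k-1$, present precisely when $p\mid k$; and, for each $1\leq j\leq k-1$, the odd residue of $x_{-1,-1}^j x_{-1,1}^{k-1-j}x_{-1,\overline{0}}$ of weight $k-2j-1$, present precisely when $j<p^{a(k-j)}$. Each weight occurring in $Q$ appears with multiplicity at most one, and the parity of its residue matches the presence of the odd factor $x_{-1,\overline{0}}$.

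To extract the stated conditions I use that $H^0_-(l)=0$ whenever $l<0$, which follows directly from Proposition \ref{descriptionofinduced}, so the hom can only be nonzero for $l\geq 0$; solving $k-2j-1\geq 0$ then pins $j$ down to $j\leq [\frac{k-1}{2}]$. The split into the two cases of the lemma corresponds to the two possible behaviors of the $j=0$ residue: when $p\mid k$ it survives in $Q$, and the inequality $0<p^{a(k)}$ holds automatically; when $p\nmid k$ this residue lies in the radical and must be excluded by hand, forcing $j\geq 1$. The dimension and parity claims are then immediate, since $l=k$ corresponds to the unique even survivor and every other case to an odd survivor, and the degenerate case $k=0$ is trivial because $H^0_+(0)=K$ has trivial radical.

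The only real subtlety is this asymmetric treatment of $j=0$: the coefficient $k$ that multiplies $x_{-1,1}^{k-1}x_{-1,\overline{0}}$ in the computation of $rad_{B^-} H^0_+(-k)$ is a unit exactly when $p\nmid k$, and the uniform inequality $j<p^{a(k-j)}$ does not detect this distinction on its own, which is precisely why the lemma is phrased as two separate cases.
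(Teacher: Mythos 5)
Your argument is correct and is essentially the paper's own: the paper likewise reduces, via the displayed Frobenius reciprocity, to reading off the weights and parities of $H^0_+(-k)/rad_{B^-}H^0_+(-k)$, whose basis is exactly the one you list (obtained from Lemma \ref{Fero's responsibility}), and then treats the lemma as evident. The only cosmetic difference is your detour through $H^0_-(l)=0$ for $l<0$ to force $j\leq[\frac{k-1}{2}]$; this is valid, but that bound is automatic anyway, since $j<p^{a(k-j)}\leq k-j$ already gives $2j<k$.
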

\begin{lm}\label{when k=l}
The supermodules $H^0_+(-k)$ and $H^0_-(k)$ are isomorphic to each other via the isomorphism given by 
$x_{-1,-1}^i x_{-1,1}^{k-i-\epsilon}x_{-1,\overline{0}}^{\epsilon}\mapsto x_{1,1}^{k-i-\epsilon} x_{1,-1}^i x_{1,\overline{0}}^{\epsilon}$.
\end{lm}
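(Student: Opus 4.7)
The plan is to realize $\phi$ as the restriction of a super-algebra homomorphism between two $Dist(G)$-stable subalgebras of $K[G]$. Let $P_+ \subset K[G]$ be the super-polynomial subalgebra generated by $x_{1,1}, x_{1,-1}, x_{1,\overline{0}}$, and $P_- \subset K[G]$ the one generated by $x_{-1,-1}, x_{-1,1}, x_{-1,\overline{0}}$. Since $\Delta(x_{\pm 1, j}) = \sum_k x_{\pm 1, k} \otimes x_{k, j}$, and analogously for $x_{\pm 1, \overline{0}}$, keeps the first tensor factor inside $P_\pm$, both subalgebras are stable under the left action of $Dist(G)$; moreover $H^0_+(-k) \subseteq P_-$ and $H^0_-(k) \subseteq P_+$ by Propositions \ref{descriptionofinduced+} and \ref{descriptionofinduced}. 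Define $\tilde\phi : P_- \to P_+$ to be the super-algebra homomorphism sending $x_{-1, j} \mapsto x_{1, j}$ and $x_{-1, \overline{0}} \mapsto x_{1, \overline{0}}$; this is well defined since both $P_-$ and $P_+$ are free graded-commutative on the displayed generators. Its restriction to $H^0_+(-k)$ is exactly the stated $\phi$.

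To verify that $\tilde\phi$ is $Dist(G)$-equivariant (equivalent, by connectedness of $G$, to $G$-equivariance), check compatibility on generators and then extend via the super-derivation rule dictated by $\Delta(u)$. On $x_{-1, j}$, any $u \in Dist(G)$ acts by $u \cdot x_{-1, j} = \sum_k (-1)^{|u|\,|x_{-1, k}|}\, x_{-1, k}\, u(x_{k, j})$, where the matrix-entry scalars $u(x_{k, j})$ depend only on the indices $(k, j)$ and are the same whether the action is performed on $P_-$ or $P_+$. Applying $\tilde\phi$ replaces each $x_{-1, k}$ by $x_{1, k}$ without altering signs, because $\tilde\phi$ is parity-preserving, and yields exactly the corresponding formula for $u \cdot x_{1, j} = u \cdot \tilde\phi(x_{-1, j})$. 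The analogous check on $x_{-1, \overline{0}}$ is identical. Compatibility on products follows by induction: $\tilde\phi$ is an algebra map, while each $u$ distributes across products via $\Delta(u)$ as a (signed) super-derivation, or a sum of such in the case of divided powers ${\bf e}^{(t)}, {\bf f}^{(t)}$.

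Bijectivity of $\phi$ is immediate from Propositions \ref{descriptionofinduced} and \ref{descriptionofinduced+}: both $H^0_-(k)$ and $H^0_+(-k)$ have dimension $2k+1$, and $\phi$ sends the displayed bases bijectively to each other. As a sanity check, Lemma \ref{nontrivialmorphism} with $l = k$ guarantees that $Hom_G(H^0_+(-k), H^0_-(k))$ is one-dimensional and even, so the resulting $\phi$ is necessarily unique up to scalar. The only delicate point is tracking super signs when the odd generators ${\bf x}, {\bf y}$ act through the odd factor $x_{\pm 1, \overline{0}}$; these are preserved by $\tilde\phi$ precisely because $\tilde\phi$ is parity-preserving on the algebra generators.
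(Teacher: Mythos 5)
The paper gives no argument for Lemma \ref{when k=l} (the proof is explicitly left as an exercise), so there is nothing to compare against; your proposal is a correct and essentially complete way to discharge it. The two points that carry the proof are exactly the ones you isolate: the left $Dist(G)$-action $u\cdot x_{i,j}=\sum_k(-1)^{|u||x_{i,k}|}x_{i,k}\,u(x_{k,j})$ (consistent with Lemma \ref{leftaction}) only moves the column index, so each row subalgebra $P_{\pm}$ is stable; and the rows indexed by $1$ and $-1$ have identical parity patterns, so the substitution $x_{-1,*}\mapsto x_{1,*}$ disturbs no Koszul signs (this is precisely what would fail for the row $\overline{0}$). Two small details deserve a line each: freeness of $P_-$, needed for $\tilde\phi$ to be well defined, holds because $x_{-1,1},x_{-1,-1}$ generate a polynomial subalgebra of $K[SL(2)]$ and $x_{-1,\overline{0}}$ is an independent odd generator in the decomposition $K[G]\simeq K[SL(2)]\otimes K[x_{1,\overline{0}},x_{-1,\overline{0}}]$ of Lemma \ref{simplifiedform}; and the induction over products should be run against the generators ${\bf e}^{(t)},{\bf f}^{(t)},\binom{{\bf h}}{t},{\bf x},{\bf y}$ of $Dist(G)$ via $u\cdot(fg)=\sum(-1)^{|u_2||f|}(u_1\cdot f)(u_2\cdot g)$, whose signs again involve only parities preserved by $\tilde\phi$. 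You are also right that Lemma \ref{nontrivialmorphism} is only a consistency check, since a one-dimensional even $Hom$ does not by itself yield bijectivity; your map on the bases of Propositions \ref{descriptionofinduced} and \ref{descriptionofinduced+} does. A shorter conceptual alternative is to conjugate by a rational point $n_0$ representing the nontrivial Weyl reflection, which swaps $B^{\pm}$ and negates weights, giving $ind^G_{B^+}K_{-k}\simeq ind^G_{B^-}K_{k}$ at once; your construction has the added benefit of producing the explicit formula on basis elements stated in the lemma.
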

\begin{proof}
Proof is an exercise left for the reader.
\end{proof}
Using Lemma \ref{when k=l} we derive that $L_+(-k)\simeq L_-(k)$ for every $k\in\mathbb{Z}_{\geq 0}$.
\begin{lm}\label{towardlinkage+}
Let $0\leq j<p^{a(k-j)}$, provided $p|k$, or let $1\leq j<p^{a(k-j)}$, provided $p\not|k$. The homomorphism $\psi_{-k,k-1-2j}:H^0_+(-k)\to H^0_-(k-1-2j)$ is given on basis elements as 

\[x_{-1,-1}^i x_{-1,1}^{k-i}\mapsto i \binom{i-1}{j}x_{1,1}^{k-i-1-j} x_{1,-1}^{i-1-j} x_{1,\overline{0}}; \ x_{-1,-1}^i x_{-1,1}^{k-i-1}x_{-1,\overline{0}}\mapsto \binom{i}{j} x_{1,1}^{k-i-1-j} x_{1,-1}^{i-j}.\]
\end{lm}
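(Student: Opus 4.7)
By Lemma \ref{nontrivialmorphism}, under the stated assumption on $j$ the space $\mathrm{Hom}_G(H^0_+(-k), H^0_-(k-1-2j))$ is one-dimensional and consists of odd morphisms, so the task reduces to exhibiting the given formula as a nonzero $G$-equivariant map $\psi$. First I would verify that $\psi$ is a well-defined odd $T$-equivariant linear map: the basis element $x_{-1,-1}^i x_{-1,1}^{k-i-\epsilon} x_{-1,\overline{0}}^{\epsilon}$ has $T$-weight $k - 2i - \epsilon$ and parity $\epsilon$ (using the column-based weight convention coming from Lemma \ref{leftaction}), and both images on the right have matching weight and opposite parity. For $i \leq j$ the prefactors $i\binom{i-1}{j}$ and $\binom{i}{j}$ vanish automatically; for $i$ too large, the right-hand monomials would have negative exponents in $x_{1,1}$, and one checks that in these excluded ranges the contribution must be interpreted as zero, consistent with the basis description in Proposition \ref{descriptionofinduced}.

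Next I would verify equivariance with the remaining generators ${\bf e}^{(t)}, {\bf f}^{(t)}, {\bf x}, {\bf y}$ of $\mathrm{Dist}(G)$. For the divided powers ${\bf e}^{(t)}$ and ${\bf f}^{(t)}$, the verification reduces to the classical $SL(2)$-calculation on the polynomial parts of the basis elements, via the tensor-product decomposition $K[G] \cong K[SL(2)] \otimes K[x_{1,\overline{0}}, x_{-1,\overline{0}}]$ from Section 4.1; the essential combinatorial identity is
\[
i\binom{i-1}{j} \;=\; (i-j)\binom{i}{j} \;=\; (j+1)\binom{i}{j+1},
\]
together with Vandermonde's identity to handle $t \geq 2$.

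The main obstacle will be verifying compatibility with the odd generators ${\bf x}$ and ${\bf y}$, whose actions (Lemma \ref{leftaction}) interchange the $\epsilon = 0$ and $\epsilon = 1$ basis families on both sides of $\psi$. Computing $\psi({\bf y} v)$ and ${\bf y}\psi(v)$ for $v = x_{-1,-1}^i x_{-1,1}^{k-i}$ produces on each side a linear combination of basis vectors in the $\epsilon = 1$ family of $H^0_-(k-1-2j)$, and matching the coefficients reduces precisely to the identity $i\binom{i-1}{j} = (i-j)\binom{i}{j}$ above; equivariance under ${\bf x}$ then follows by a symmetric argument using the pair $({\bf e}^{(t)}, {\bf x})$. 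Nonvanishing is immediate from evaluating either formula at a valid $i$ with nonzero binomial coefficient, so by Lemma \ref{nontrivialmorphism} the constructed map must equal $\psi_{-k, k-1-2j}$ up to scalar.
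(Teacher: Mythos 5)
Your overall strategy --- verify directly that the displayed formula is $Dist(G)$-equivariant and then invoke the one-dimensionality of $Hom_G(H^0_+(-k),H^0_-(k-1-2j))$ from Lemma \ref{nontrivialmorphism} --- is legitimate in principle, but it inverts the logic of the paper's proof and makes the hard part unnecessary. The paper never verifies equivariance of the formula: existence of a nonzero odd morphism is already guaranteed by Lemma \ref{nontrivialmorphism}, weight and parity considerations force the shape $x_{-1,-1}^i x_{-1,1}^{k-i}\mapsto C_i x_{1,1}^{k-i-1-j}x_{1,-1}^{i-1-j}x_{1,\overline{0}}$ and $x_{-1,-1}^i x_{-1,1}^{k-i-1}x_{-1,\overline{0}}\mapsto D_i x_{1,1}^{k-i-1-j}x_{1,-1}^{i-j}$ with unknown scalars, one normalizes $D_j=1$, and then two necessary conditions determine everything: applying ${\bf e}^{(i-j)}$ gives $D_i=\binom{i}{j}$, and applying ${\bf y}$ gives $C_i=-(k-i)D_i\equiv i\binom{i-1}{j}\pmod p$, using $k\equiv j\pmod p$ (which follows from $j<p^{a(k-j)}$). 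No compatibility with ${\bf f}^{(t)}$, ${\bf x}$ or higher divided powers is ever checked, and none needs to be.

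The concrete gap in your plan is the divided-power verification. Compatibility with ${\bf e}^{(t)}$ is indeed an integral identity, namely $\binom{i}{t}\binom{i-t}{j}=\binom{i}{j}\binom{i-j}{t}$, but compatibility with ${\bf f}^{(t)}$ requires, for instance on the $\epsilon=1$ family, the congruence $\binom{k-i-1}{t}\binom{i+t}{j}\equiv\binom{i}{j}\binom{k-i-1-j}{t}\pmod p$. This is false over $\mathbb{Z}$ (take $k-i-1=3$, $t=1$, $i=2$, $j=1$: one side is $9$, the other $4$) and holds only modulo $p$ because of the hypothesis $j<p^{a(k-j)}$, i.e.\ $k\equiv j$ modulo a power of $p$ exceeding $j$; establishing it requires Kummer/Lucas-type arguments, not Vandermonde, so "Vandermonde's identity to handle $t\geq 2$" does not carry this step. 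Likewise, your remark that for large $i$ (where $k-i-1-j<0$) "the contribution must be interpreted as zero" is exactly a mod-$p$ vanishing of the coefficients $i\binom{i-1}{j}$ and $\binom{i}{j}$ that has to be proved by Kummer's theorem (this is done in the proof of Corollary \ref{korko}, not automatically by the basis description). Either supply these congruences, or --- more economically --- adopt the paper's direction: since Lemma \ref{nontrivialmorphism} already provides the morphism and the one-dimensionality, it suffices to determine its coefficients by applying the two operators ${\bf e}^{(i-j)}$ and ${\bf y}$, which is the entire content of the paper's proof.
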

\begin{proof}
Comparing the weights we see that 
\[x_{-1,-1}^i x_{-1,1}^{k-i}\mapsto C_i x_{1,1}^{k-i-1-j} x_{1,-1}^{i-1-j} x_{1,\overline{0}};  \ x_{-1,-1}^i x_{-1,1}^{k-i-1} x_{-1,\overline{0}}\mapsto D_i x_{1,1}^{k-i-1-j} x_{1,-1}^{i-j}.\]
Without a loss of generality one can assume that $D_j=1$. If $j > i-1$, then the right-hand side of the first map is defined if and only if $C_i=0$. Similarly, if $j > i$, then the right-hand side of the second map is defined if and only if $D_i=0$.

Assume that $j\leq i$. Then
\[\psi_{-k, k-1-2j}({\bf e}^{(i-j)}x_{-1,-1}^i x_{-1,1}^{k-i-1}x_{-1,\overline{0}})=\binom{i}{i-j}\psi_{-k, k-1-2j}(x_{-1,-1}^j x_{-1,1}^{k-j-1}x_{-1,\overline{0}})=\]\[\binom{i}{j}x_{1,1}^{k-1-2j}={\bf e}^{(i-j)}(D_i x_{1,1}^{k-i-1-j} x_{1,-1}^{i-j})=D_i x_{1,1}^{k-1-2j},\]
hence $D_i=\binom{i}{j}$.

Next, assume that $j\leq i-1$. Then
\[\psi_{-k, k-1-2j}({\bf y} x_{-1,-1}^i x_{-1,1}^{k-i})=(k-i)\psi_{-k, k-1-2j}(x_{-1,-1}^i x_{-1,1}^{k-i-1}x_{-1,\overline{0}})=\]\[(k-i)D_i x_{1,1}^{k-i-1-j} x_{1,-1}^{i-j}
=-{\bf y}(C_i x_{1,1}^{k-i-1-j} x_{1,-1}^{i-1-j} x_{1,\overline{0}})=C_i x_{1,1}^{k-i-1-j} x_{1,-1}^{i-j},\]
hence $C_i=-(k-i)D_i=(j-i)\binom{i}{j}\equiv (i-j)\binom{i}{j}\equiv i\binom{i-1}{j}\pmod p$.
\end{proof}

\begin{cor}\label{korko}
Assume $0\leq j<p^{a(k-j)}$, provided $p|k$, or $1\leq j<p^{a(k-j)}$, provided $p\not| k$.

The kernel of $\psi_{-k,k-1-2j}$ has a basis consisting of the following elements:

$x_{-1,-1}^i x_{-1,1}^{k-i}$ such that either $0\leq i\leq j$, $k-j\leq i \leq k$, or $j+1\leq i\leq k-j-1$ and $i\binom{i-1}{j}$ is a multiple of $p$;

$x_{-1,-1}^i x_{-1,1}^{k-i-1}x_{-1,\overline{0}}$ such that either $0\leq i\leq j-1$, $k-j\leq i \leq k-1$, or $j\leq i\leq k-j-1$ and $\binom{i}{j}$ is a multiple of $p$.
\end{cor}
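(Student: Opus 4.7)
The plan is to read off $\ker \psi_{-k,k-1-2j}$ directly from the explicit action formulas of Lemma \ref{towardlinkage+}, combined with the basis of $H^0_+(-k)$ from Proposition \ref{descriptionofinduced+}. Before analysing cases, I first observe a decisive structural fact: each basis vector of $H^0_+(-k)$ is sent to a scalar multiple of at most one target basis vector of $H^0_-(k-1-2j)$, and distinct source basis vectors map to distinct target basis vectors (whenever the images are nonzero). Indeed, images of the type-A basis $\{x_{-1,-1}^i x_{-1,1}^{k-i}\}$ carry the odd factor $x_{1,\overline{0}}$, images of the type-B basis $\{x_{-1,-1}^i x_{-1,1}^{k-1-i} x_{-1,\overline{0}}\}$ do not, and within each family the parameter $i$ determines the exponents of the target monomial uniquely. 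Consequently a basis of $\ker\psi_{-k,k-1-2j}$ consists precisely of those source basis vectors whose image is zero, and it suffices to determine this vanishing vector by vector.

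For the type-A vector $x_{-1,-1}^i x_{-1,1}^{k-i}$, the image reads $i\binom{i-1}{j}\, x_{1,1}^{k-i-1-j} x_{1,-1}^{i-1-j} x_{1,\overline{0}}$. This vanishes in three disjoint ranges: when $0\le i\le j$ the integer scalar is itself zero (either $i=0$, or $j>i-1$ forces $\binom{i-1}{j}=0$); when $k-j\le i\le k$ the exponent $k-i-1-j$ is negative, so the formal monomial is not a basis element of $H^0_-(k-1-2j)$ as listed in Proposition \ref{descriptionofinduced}, and since $\psi$ is a bona fide $G$-homomorphism with codomain $H^0_-(k-1-2j)$ its value must be zero on these vectors; and in the middle range $j+1\le i\le k-j-1$, where both the scalar and the target monomial are genuine, the image vanishes exactly when $p\mid i\binom{i-1}{j}$. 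An entirely parallel analysis handles the type-B vector $x_{-1,-1}^i x_{-1,1}^{k-1-i} x_{-1,\overline{0}}$, whose image is $\binom{i}{j}\, x_{1,1}^{k-i-1-j} x_{1,-1}^{i-j}$: the scalar vanishes for $0\le i\le j-1$, the target monomial is invalid for $k-j\le i\le k-1$, and in the middle range $j\le i\le k-j-1$ membership in $\ker\psi$ is equivalent to $p\mid\binom{i}{j}$.

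Collecting both lists yields exactly the basis described in the statement of the corollary. The only step that is not purely mechanical is the justification that the source vectors in the upper boundary range must map to zero, since the literal output of the formula in Lemma \ref{towardlinkage+} produces a negative $x_{1,1}$-exponent in that range; here the codomain structure together with the fact that $\psi$ is a well-defined $G$-homomorphism into $H^0_-(k-1-2j)$ forces the natural interpretation that any target monomial with a negative exponent equals zero. I expect this boundary observation, together with keeping the three ranges aligned across both types, to be the only genuine bookkeeping in the argument.
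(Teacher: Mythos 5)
Your argument is correct, and it reaches the same endpoint as the paper by the same overall route (reading the kernel off the explicit formulas of Lemma \ref{towardlinkage+} against the bases of Propositions \ref{descriptionofinduced+} and \ref{descriptionofinduced}), but it handles the one genuinely nontrivial step differently. The paper's entire proof is devoted to the upper boundary range $i\geq k-j$, and it disposes of it arithmetically: for $i=k-j$ one has $p\mid i$ because $p^{a(k-j)}\,\|\,(k-j)$, and for $i>k-j$ Kummer's theorem gives $p\mid\binom{i-1}{j}$ (and similarly $p\mid\binom{i}{j}$ for the second family), so the scalar in Lemma \ref{towardlinkage+} literally vanishes there. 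You instead argue structurally: since $\psi_{-k,k-1-2j}$ is $T$-equivariant and the weight $k-2i$ (resp. $k-1-2i$) of the source vector falls below the lowest weight $-(k-1-2j)$ of the codomain when $i\geq k-j$, the image has nowhere to live and must be zero; this is valid, and it is worth stating it exactly in that weight-space form rather than as ``a monomial with negative exponent equals zero,'' which is the only slightly loose phrase in your write-up. The trade-off is that the paper's computation has a side benefit your route does not deliver: it shows the coefficients $i\binom{i-1}{j}$ and $\binom{i}{j}$ are divisible by $p$ precisely where the displayed target monomial would be ill-formed, thereby confirming that the formulas of Lemma \ref{towardlinkage+} are internally consistent on the whole range of $i$, whereas your argument bypasses the formula in that range altogether. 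Your treatment of the low range (scalar vanishes since $\binom{i-1}{j}=0$ for $j>i-1$, resp.\ $\binom{i}{j}=0$ for $j>i$) and of the middle range (genuine basis vector, so membership in the kernel is exactly divisibility of the coefficient by $p$), together with the observation that distinct source basis vectors have distinct weights so the kernel is spanned by the basis vectors it contains, completes the proof correctly.
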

\begin{proof}
If $i\geq k-j$, then either $i=k-j$ and $p|i$, or $i> k-j$ and Kummer's theorem implies $p|\binom{i-1}{j}$. The proof of the second statement is similar.
\end{proof}

\section{Simple composition factors of induced supermodules for $SpO(2|1)$}

\subsection{$SL(2)$-modules}

In this subsection we describe simple composition factors of induced $SL(2)$-modules over the ground field $K$ of positive characteristic $p$.

The induced module $H^0_{SL(2)}(k)$ of the highest weight $k$ has a basis consisting of elements
$x_{1,1}^{k-i}x_{1,-1}^i$ for $i=0, \ldots, k$ and its character $\chi(H^0_{SL(2)}(k))$ equals 
$\sum_{i=0}^k x^{k-2i}$.

The simple module $L_{SL(2)}(k)$ of the highest weight $k$ has a basis consisting of elements
$x_{1,1}^{k-i}x_{1,-1}^i$, where $i=0, \ldots, k$ are such that $\binom{k}{i}$ is not divisible by $p$ and its character
$\chi(L_{SL(2)}(k))$ equals $\sum_{1\leq i\leq k; p\not|\binom{k}{i}} x^{k-2i}$.

Since the multiplicities of all weight spaces involved are either zero or one, in order to find simple composition factors of $H^0_{SL(2)}(k)$, it is enough to exhibit those simple modules $L(\ell_j)$ such that the sum of their characters equals the character of $H^0_{SL(2)}(k)$. Thus this question is purely combinatorial, involving only binomial coefficients, i.e. entries of the Pascal triangle, considered modulo $p$. 

Assume that the $p$-adic expansion of $k+1$ is given as $k+1=\sum_{i=0}^u a_ip^i$, where each $0\leq a_i <p$.

We will consider specific words $w=w_0\ldots w_u$ (to be defined recursively later) of length $u+1$ consisting of symbols $\1$, $\2$, $\3$ and $\4$. To each such word $w$ we assign 
a subset $S(w)$ of the set $\{0, \ldots, k\}$ consisting of those numbers $s$ with the $p$-adic expansion $s=\sum_{i=0}^u s_ip^i$ such that
\[\begin{aligned} 0\leq s_i\leq a_i-1 &\mbox{ if } w_i="\1", \qquad &0\leq s_i\leq a_i &\mbox{ if }w_i="\2", \\
a_i\leq s_i<p &\mbox{ if } w_i="\3" \mbox{ and } & a_i+1\leq s_i <p &\mbox{ if } w_i="\4".
\end{aligned}\]

To each word $w$ we assign a weight $\ell(w)$ by 
\[\ell_k(w)=k-\sum_{w_i="\3"} 2a_ip^i -\sum_{w_i="\4"} 2(a_i+1)p^i.\]

Due to the property $\binom{k}{i}=\binom{k}{k-i}$ we note that the character of a simple module $L_{SL(2)}(\ell)$ is uniquely determined by the set $S(\ell)$ of its nonnegative weights. 
We will see that if $L_{SL(2)}(\ell)$ is a composition factor of $H^0_{SL(2)}(k)$, then $\ell=\ell_k(w)$ for some word $w$ as above.

We will define the ordered sets $W^s=\cup_{j=-1}^{s-1} W_j$ and $W=\cup_{j=-1}^{\infty} W_j$ recursively as follows.

First define $W_{-1}$ to consist of a single word $w="\1\2\ldots \2"$. 
Then define $W_0$ to consist of a single word $w="\3\1\2\ldots \2"$. Assume the ordered sets $W_j=\{w^j_1, \ldots, w^j_{2^j}\}$ for $0\leq j\leq s-1$ are defined, define 
$W_{s}=\{w^{s}_1, \ldots w^{s}_{2^{s}}\}$ as follows.

For $j=1, \ldots 2^{s-1}$, $w^s_j$ is obtained from $w^{s-1}_j$ by replacing its $s$-th symbol $\1$ by $\3$ and its $(s+1)$-st symbol $\2$ by $\1$. The remaining elements
$w^s_j$ for $j=2^{s-1}+1, \ldots, 2^s$ are obtained by taking elements of $W_{-1}$ through $W_{s-2}$ in the order they are listed and changing their $s$-th symbols 
$\2$ to $\4$ and their $(s+1)$-st symbols $\2$ to $\1$.

\begin{ex}
Let $k+1=a_0+a_1p+a_2p^2+a_3p^3+a_4p^4$, where $a_4\neq 0$. Then the ordered set $W^4$ consists of the following words $w$ (listed together with $\ell_k(w)$)

\[\begin{aligned}&\1\2\2\2\2 & \mbox{ of weight  } & k & & & & & \\
&\3\1\2\2\2 & \mbox{ of weight  } & k&-2a_0&&&&\\
&\3\3\1\2\2 & \mbox{ of weight  } & k&-2a_0&-2a_1p&&\\
&\1\4\1\2\2 & \mbox{ of weight  } & k&&-2(a_1+1)p&&\\
&\3\3\3\1\2& \mbox{ of weight  } & k&-2a_0&-2a_1p&-2a_2p^2&\\
&\1\4\3\1\2 & \mbox{ of weight  } & k&&-2(a_1+1)p&-2a_2p^2 & \\
&\1\2\4\1\2 & \mbox{ of weight } & k&&&-2(a_2+1)p^2& \\
&\3\1\4\1\2 & \mbox{ of weight  } & k&-2a_0&&-2(a_2+1)p^2& \\
&\3\3\3\3\1 &  \mbox{ of weight  } & k&-2a_0&-2a_1p&-2a_2p^2&-2a_3p^3   \\
&\1\4\3\3\1 & \mbox{ of weight  } & k&&-2(a_1+1)p&-2a_2p^2&-2a_3p^3 \\
&\1\2\4\3\1 & \mbox{ of weight }  & k&&&-2(a_2+1)p^2&-2a_3p^3 \\
&\3\1\4\3\1 & \mbox{ of weight  } & k&-2a_0&&-2(a_2+1)p^2&-2a_3p^3\\
&\1\2\2\4\1 & \mbox{ of weight  } & k&&&&-2(a_3+1)p^3\\
&\3\1\2\4\1 & \mbox{ of weight  } & k&-2a_0&&&-2(a_3+1)p^3\\
&\3\3\1\4\1 & \mbox{ of weight  } & k&-2a_0&-2a_1p&&-2(a_3+1)p^3\\
&\1\4\1\4\1 & \mbox{ of weight  } & k&&-2(a_1+1)p&&-2(a_3+1)p^3
\end{aligned}\] 
\end{ex}

\begin{pr}\label{simplesl2}
Let $k+1=\sum_{i=0}^u a_ip^i$, where $0\leq a_i <p$.

If $k$ is such that every $a_i\neq 0, p-1$, then the simple composition factors of $H^0_{SL(2)}(k)$ are exactly those $L_{SL(2)}(\ell_k(w))$, where $w\in W^u$ and $\ell_k(w)\geq 0$.

For each $i$ such that $a_i=p-1$, the weights $\ell_k(w)$, corresponding to $w$ such that $w_i="\4"$, are removed from this list. The list of weights after these removals gives all simple
composition factors $L_{SL(2)}(\ell)$ of $H^0_{SL(2)}(k)$. 

For each $i$ such that $a_i=0$, we remove all weights $\ell_k(w)$, corresponding to $w$ such that $w_i="\1"$. After removing duplicate listings of the remaining weights, we obtain the list of all simple composition factors $L_{SL(2)}(\ell)$ of $H^0_{SL(2)}(k)$.
\end{pr}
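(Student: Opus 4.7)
The plan is to reduce the proposition to a combinatorial identity of characters. Every weight space of $H^0_{SL(2)}(k)$ is one-dimensional, and by Lucas's theorem every weight space of each simple $L_{SL(2)}(\ell)$ is also one-dimensional, with
\[\chi(L_{SL(2)}(\ell)) = \sum_{j} x^{\ell - 2j},\]
the sum taken over those $0 \le j \le \ell$ whose base-$p$ digits are termwise bounded by those of $\ell$. Hence the problem reduces to partitioning the weight multiset $\{k-2i : 0 \le i \le k\}$ into a disjoint union of weight sets of simples; multiplicity-freeness then guarantees that any such partition determines the composition factors.

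First I would establish a digit dictionary: to each word $w = w_0 \ldots w_u$ associate the weight $\ell_k(w) = \sum_t \ell_t p^t$ whose base-$p$ digits are
\[\ell_t = a_t - 1,\ a_t,\ p - 1 - a_t, \text{ or } p - 2 - a_t\]
according as the symbol $w_t$ is $<$, $\le$, $\ge$, or $>$. A direct $p$-adic subtraction using $k = (\sum_t a_t p^t) - 1$ verifies that this $\ell_k(w)$ agrees with the formula stated in the proposition, at least in the generic case where no $a_t$ equals $0$ or $p-1$. The affine shift $i \mapsto j = i - \delta(w)$ with $\delta(w) = \sum_{w_t = {\ge}} a_t p^t + \sum_{w_t = {>}}(a_t + 1) p^t$ is then a bijection from $S(w)$ onto the weight-index set $\{0 \le j \le \ell_k(w) : p \nmid \binom{\ell_k(w)}{j}\}$ of $L_{SL(2)}(\ell_k(w))$, and it translates $\{k - 2i : i \in S(w)\}$ onto the full weight set of that simple module.

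The heart of the proof is to show, by induction on $u$, that $\{S(w) : w \in W^u\}$ partitions $\{0, 1, \ldots, k\}$ in the generic case. The base case $u = 0$ has $k < p$, so $H^0_{SL(2)}(k) = L_{SL(2)}(k)$ is irreducible and matches the single word in $W^0$. For the inductive step, split $\{0, \ldots, k\}$ by the value of the top digit $i_u$: when $i_u = a_u$ the values $i - a_u p^u$ cover $\{0, \ldots, (\sum_{t < u} a_t p^t) - 1\}$ and are partitioned by the inductive hypothesis applied to the truncated expansion, corresponding to the words of $W^{u-1}$ (all of which have top symbol $\le$); when $i_u < a_u$ the $2^{u-1}$ words in $W_{u-1}$ (all of which have top symbol $<$) partition the corresponding slab of size $a_u p^u$. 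The doubling $|W_s| = 2|W_{s-1}|$ in the recursive construction encodes exactly the two ways to introduce a new local inequality at position $s$: either upgrading an existing $<$ to $\ge$ (first family), or introducing a fresh $>$ jump (second family). Summing characters then yields $\chi(H^0_{SL(2)}(k)) = \sum_{w \in W^u} \chi(L_{SL(2)}(\ell_k(w)))$, establishing the proposition in the generic case.

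For the edge cases, if $a_t = p - 1$ then any word with $w_t = {>}$ would require $\ell_t = -1$, so $S(w) = \emptyset$ and the word is removed from the list. Symmetrically, if $a_t = 0$ any word with $w_t = {<}$ has $S(w) = \emptyset$; moreover, pairs of words differing only by the local identification $<$ versus $\ge$ at position $t$ yield identical $\ell_k(w)$, producing the duplicates that must be consolidated. The main obstacle is the combinatorial partition in the third paragraph: one must verify in detail that the recursive doubling of $W_s$ matches the digit-by-digit case analysis on $i$, and that the alternating chain of $<$ and $\ge$ symbols in each word interleaves correctly with the borrow/carry behaviour inherent in the expansion $k = (\sum_t a_t p^t) - 1$.
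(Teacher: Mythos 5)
Your overall strategy coincides with the paper's: reduce to a character identity using the fact that all weight multiplicities involved are one, identify the weight set of $L_{SL(2)}(\ell_k(w))$ with $\{k-2i: i\in S(w)\}$ via Lucas/Kummer, and then show that the sets $S(w)$, $w\in W^u$, partition $\{0,\dots,k\}$. Your digit dictionary together with the borrow-free shift $i\mapsto i-\delta(w)$ is a clean repackaging of the paper's repeated Kummer computations (it is correct, but note it holds only for the words of $W$, because it relies on the alternation pattern of the symbols; you flag this but do not prove it), and your treatment of the degenerate digits $a_i=0,p-1$ matches the paper.

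The genuine gap is in the inductive step for the partition, which is the heart of the proof. You split $\{0,\dots,k\}$ by the top digit $i_u$ and claim the slab $i_u=a_u$ is accounted for by the words of $W^{u-1}$ and the slab $i_u<a_u$ by the $2^{u-1}$ words of $W_{u-1}$. But a word $w\in W^{u-1}$ has top symbol $\2$, so $S(w)$ allows every $0\le i_u\le a_u$, not only $i_u=a_u$; and the words of $W_{u-1}$ (top symbol $\1$) cover only part of the slab $i_u<a_u$. Concretely, take $p=5$, $k=16$, so $k+1=2+3\cdot 5$, $u=1$, and $W^1$ consists of the two words $\1\2$ and $\3\1$: then $S(\1\2)=\{0,1,5,6,10,11,15,16\}$ and $S(\3\1)=\{2,3,4,7,8,9,12,13,14\}$, so $S(\1\2)$ is not contained in the slab $i_1=3$ (which is just $\{15,16\}$), and $S(\3\1)$, with $9$ elements, cannot partition the slab $i_1<3$, which has $a_1p=15$ elements. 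Thus neither half of your step holds, and the character sum over your claimed split would not match. To repair it you need either the paper's greedy batch argument (repeatedly take the largest weight not yet covered and verify via Kummer that the next word of $W$ covers exactly the missing block), or a strengthened induction: prove simultaneously that the length-$u$ words partition $[0,k']$, where $k'+1=\sum_{t<u}a_tp^t$, and that the same words with last symbol flipped from $\1$ (resp. $\2$) to $\3$ (resp. $\4$) partition the complement $[k'+1,p^u-1]$; then $W^{u-1}$ contributes $\{i: i\bmod p^u\le k',\ i_u\le a_u\}$, $W_{u-1}$ contributes $\{i: i\bmod p^u>k',\ i_u\le a_u-1\}$, and the disjoint union is $[0,k]$. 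Equivalently, run the recursion on the lowest digit, keeping track of whether the current subproblem is of closed type $[0,M]$ or open type $[0,M-1]$; this is exactly what the alternation of symbols in $W$ encodes. As written, the partition step does not go through.
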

\begin{proof}
We will be using repeateadly Kummer's theorem that states that $\binom{n}{j}$ is not divisible by $p$ if and only of there is no $p$-adic carry when $n-j$ is added to $j$.

Assume first that every $a_i\neq 0, p-1$.

Using this we derive that the simple module $L_{SL(2)}(k)$ covers exactly those weights $k-2s\geq 0$, where $s=\sum_{i=0}^u s_ip^i$ and $0\leq s_i <p$, for which 
$0\leq s_0\leq a_0-1$ and $0\leq s_i\leq a_i$ for $i>0$.  Thus $S(k)=S(\1\2\ldots \2)$. The largest weight not covered by $S(k)$ is $k-2a_0$.

We will use the notation $b=\sum_{i=0}^u b_ip^i$, where $0\leq b_i <p$.

For the null batch, write $s=a_0+b$.
Since $k-2a_0=(p-1-a_0)+(a_1-1)p + a_2p^2 + \ldots$, the simple module $L_{SL(2)}(k-2a_0)$ covers nonnegative weights $k-2s$ where  $0\leq b_0\leq p-1-a_0$, $0\leq b_1 \leq a_1-1$, 
$0\leq b_2 \leq a_2, \ldots$ which is equivalent to 
$a_0\leq s_0\leq p-1, \, 0\leq s_1 \leq a_1-1, \, 0\leq s_2 \leq a_2 \, \ldots$ and equals $S(\3\1\2\ldots \2)$.
The simples $L_{SL(2)}(k)$ and $L_{SL(2)}(k-2a_0)$ cover all weights of type $k-2s$, where $0\leq s<a_0+a_1p$.

The first batch starts with the weight $k-2a_0-2a_1p$. Write $s=a_0+a_1p+b$. Since
$k-2a_0-2a_1p=(p-1-a_0)+(p-1-a_1)p+(a_2-1)p^2+a_3p^3 + \ldots$, the simple module $L_{SL(2)}(k-2a_0-2a_1p)$ covers nonnegative weights $k-2s$ where  
$0\leq b_0\leq p-1-a_0$, $0\leq b_1 \leq p-1-a_1$, 
$0\leq b_2 \leq a_2-1$, $0\leq b_3 \leq a_3 \ldots$ which is equivalent to 
$a_0\leq s_0\leq p-1, \, a_1\leq s_1 \leq a_1-1, \, 0\leq s_2 \leq a_2-1, \, 0\leq s_3 \leq a_3, \ldots$ and equals $S(\3\3\1\2\ldots \2)$.

The largest missing weight is $k-2(a_1+1)p$ if $a_1\neq p-1$. Write $s=(a_1+1)p+b$. Since
$k-2(a_1+1)p=(a_0-1)+(p-2-a_1)p+(a_2-1)p^2+a_3p^3 + \ldots$, the simple module $L_{SL(2)}(k-2(a_1+1)p)$ covers nonnegative weights $k-2s$ where  
$0\leq b_0\leq a_0-1$, $0\leq b_1 \leq p-2-a_1$, 
$0\leq b_2 \leq a_2-1$, $0\leq b_3 \leq a_3 \ldots$ which is equivalent to 
$0\leq s_0\leq a_0-1, \, a_1+1\leq s_1 \leq p-1, \, 0\leq s_2 \leq a_2-1, \, 0\leq s_3 \leq a_3, \ldots$ and equals $S(\1\4\1\2\ldots \2)$.
All the simple modules listed so far cover all weights of type $k-2s$, where $0\leq s<a_0+a_1p+a_2p^2$.

The simple modules $L_{SL(2)}(k)$ and $L_{SL(2)}(k-2a_0)$ corresponding to $\cup_{j=-1}^0 W_j$ cover all nonnegative weights $k-2s$ such that 
$s_0+s_1p<a_0+a_1p$ and $s_j\leq a_j$ for $j>1$.

The simple modules $L_{SL(2)}(k-2a_0-2a_1p)$ and $L_{SL(2)}(k-2(a_1+1)p)$ corresponding to $W_1$ cover all nonnegative weights $k-2s$ such that
$a_0+a_1p\leq s_0+s_1p+s_2p^2<a_0+a_1p+a_2p^2$ and $s_j\leq a_j$ for $j>2$.

Now we are ready to perform the inductive step. Assume that $k>0$ and we have determined the simple modules $L_{SL(2)}(\ell_k(w))$, where $w\in \cup_{j=-1}^{k-1} W_j$ for $k>0$ as described before and
$S(\ell_k(w))=S(w)$. Also, assume that the simple modules corresponding to $\cup_{j=-1}^{k-2} W_j$ cover all nonnegative weights $k-2s$ such that
$s_0+s_1p+s_2p^2+\ldots+s_{k-1}p^{k-1}<a_0+a_1p+a_2p^2+\ldots+a_{k-1}p^{k-1}$, $0\leq s_k\leq a_k-1$ and $0 \leq s_j\leq a_j$ for $j>k$; and 
the simple modules corresponding to $W_k$ cover all nonnegative weights $k-2s$ such that
$a_0+a_1p+\ldots+a_{k-1}p^{k-1}\leq s_0+s_1p+s_2p^2+\ldots+s_{k-1}p^{k-1}<a_0+a_1p+a_2p^2+\ldots+a_kp^k$, $0 \leq s_k \leq a_k-1$ and $0\leq s_j\leq a_j$ for $j>k$.

The largest weight not covered by simples corresponding to $\cup_{j=-1}^{v-1} W_j$ is $k-2a_0-2a_1p-\ldots -2a_vp^v$. Write $s=(a_0+a_1p+\ldots +a_vp^v)+b$. Since
$k-2(a_0+a_1p+\ldots+a_vp^v)=(p-1-a_0)+(p_1-a_1)p + \ldots (p-1-a_v)p^v +(a_{v+1}-1)+a_{v+2}p^{v+2}+\ldots$, the simple $L_{SL(2)}(k-2(a_0+a_1p+\ldots a_vp^v))$ covers
nonnegative $k-2s$ where
$0\leq b_0\leq p-1-a_0$, \ldots, $0\leq b_v \leq p-1-a_v$, 
$0\leq b_{v+1} \leq a_{v+1}-1$, $0\leq b_{v+2} \leq a_{v+2} \ldots$ which is equivalent to 
$a_0\leq s_0\leq p-1, \ldots, a_v\leq s_v \leq p-1, \, 0\leq s_{v+1} \leq a_{v+1}-1, \, 0\leq s_{v+2} \leq a_{v+2}, \ldots$ and equals $S(\underbrace{\3\ldots \3}_k\1\underbrace{\2\ldots \2}_{u-v})$. Thus the first weight of the $v$-th batch is $k-2a_0-2a_1p-\ldots -2a_vp^v$ and the next weight is $k-2(a_1+1)p-2a_2p^2-\ldots -2a_vp^v$ which covers
nonnegative weights $k-2s$ such that $0\leq s_0\leq a_0=1$, $a_1+1\leq s_1 \leq p-1$, $a_2\leq s_2\leq s_2\leq p-1$, \ldots, $a_v\leq s_v\leq p-1$, $0\leq s_v\leq a_v-1$, 
$0\leq s_{v+1} \leq a_{v+1}$, \ldots, $0\leq s_u\leq a_u$ which equals $S(\1\4\underbrace{\3\ldots \3}_{v-2}\1\underbrace{\2\ldots\2}_{u-v})$.

Using the definition of elements $w^v_1, \ldots, w^v_{2^{v-1}}$ and the inductive assumption, we build consecutive weights $\ell_k(w^v_1)$, \ldots, $\ell_k(w^v_{2^{v-1}})$ such that 
$\cup_{j=1}^{2^{v-1}} S(\ell_k(w^v_j))$ cover all nonnegative weights $k-2s$, where $a_0+a_1p+\ldots a_vp^v\leq s<(a_v+1)p^v$, $0\leq s_{v+1}\leq a_{v+1}-1$ 
and $0\leq s_j\leq a_j$ for $j>v+1$. This confirms that the modules $L_{SL(2)}(\ell_k(w^v_j))$ for $j=1, \ldots, 2^{v-1}$ are composition factors of $H^0_{SL(2)}(k)$.

Afterward, using the definition of elements $w^v_{2^{v-1}+1}, \ldots, w^v_{2^v}$ and the inductive assumption, we build consecutive weights 
$\ell_k(w^v_{2^{v-1}+1})$, \ldots, $\ell_k(w^v_{2^v})$ such that 
$\cup_{j=2^{v-1}+1}^{2^v} S(\ell_k(w^v_j))$ cover all nonnegative weights $k-2s$, where $(a_v+1)p^v\leq s<a_0+a_1p+\ldots +a_vp^v$, $0\leq s_{v+1}\leq a_{v+1}-1$ 
and $0\leq s_j\leq a_j$ for $j>v+1$. This confirms that the modules $L_{SL(2)}(\ell_k(w^v_j))$ for $j=2^{v-1}+1, \ldots, 2^v$ are composition factors of $H^0_{SL(2)}(k)$.

After $u-1$ batches we cover all nonnegative weights $k-2s$ where $0\leq s\leq a_0+\ldots+a_up^u=k+1$, hence all nonnegative weights $k-2s$ for $0\leq s$. This shows that we have found all simple composition factors of $H^0_{SL(2)}(k)$ and they are described in the statement of the proposition.

If $a_i=p-1$, then we omit the terms $w$ with $w_i="\4"$ because the corresponding set $S(w)$ is empty. 

Analogously, of $a_i=0$, then we need to omit the terms $w$ with $w_i="\1"$because the corresponding set $S(w)$ is empty. Also, in this case some of the weights on our list can appear more than once. In this case the correct description of the set $S(w)$ is given by the latest appearance of $w'$ such that $\ell_k(w)=\ell_k(w')$. By eliminating all $w$ with $w_i="\1"$ and 
duplicate entries on our list we arrive at the list of all simple composition factors of $H^0_{SL(2)}(k)$.

In both cases of appearances of $a_i=p-1$ or $a_i=0$ the previous inductive arguments carry over.
\end{proof}

\begin{rem}\label{linksl2}
As a direct consequence of Proposition \ref{simplesl2} one can derive the well-known linkage principle for $SL(2)$ over a ground field $K$ of characteristic $p$.
Denote by $d(l)$ the defect of the weight $l$, that is the largest integer such that $l+1\equiv 0 \pmod{p^d}$.
Then the nonnegative weights $l$ and $k$ are $SL(2)$-linked if and only if $d(l)=d(k)$ and either 
$l\equiv k \pmod{2p^{d+1}}$ or $l\equiv -k-2 \pmod{2p^{d+1}}$.
The way the defect $d(k)$ appears in the above consideration is that simple composition factors of $H^0_{SL(2)}(k)$ correspond to words $w\in W(k)$ starting with 
$\underbrace{\3\ldots \3}_{d(k)}$. This implies that linked $SL(2)$-weights must have the same defect. One can also simplify the listing of simple composition factors for $H^0_{SL(2)}(k)$
by replacing every appearance of $a_ip^i$ and $(a_i+1)p^i$ by $a_{i+d(k)}p^{i+d(k)}$ and $(a_{i+d(k)}+1)p^{i+d(k)}$, respectively (since all $a_0=\ldots =a_{d-1}=0$.) 
\end{rem}

\subsection{SpO(2$|$1)-supermodules} 

In this subsection we describe simple composition factors of induced $SpO(2|1)$-supermodules over the ground field $K$ of positive characteristic $p$.

\begin{lm}\label{osptosl2}
The induced $SpO(2|1)$-supermodule $H^0_-(k)$, considered as an $SL(2)$-module, is isomorphic to $H^0_{SL(2)}(k)$ if $p|k$ and to $H^0_{SL(2)}(k)\oplus H^0_{SL(2)}(k-1)$ 
if $p\not | k$.
The simple $SpO(2|1)$-supermodule $L_-(k)$, considered as an $SL(2)$-module, is isomorphic to $L_{SL(2)}(k)$ if $p|k$ and to $L_{SL(2)}(k)\oplus L_{SL(2)}(k-1)$ if $p\not | k$.
\end{lm}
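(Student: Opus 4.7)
The plan is to exploit the explicit basis from Proposition \ref{descriptionofinduced} together with the observation, read off from Lemma \ref{leftaction}, that the left $Dist(SL(2))$-action on $K[SpO(2|1)]$ modifies only the second (column) index of each coordinate function $x_{i,j}$. In particular ${\bf e}^{(t)}$, ${\bf f}^{(t)}$ and $\binom{{\bf h}}{t}$ for $t\ge 1$ annihilate $x_{1,\bar 0}$, so this element is $SL(2)$-invariant of weight zero.

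First I would split $H^0_-(k)$ into its even and odd parts, $V_1=\mathrm{span}\{x_{1,1}^i x_{1,-1}^{k-i}\mid 0\le i\le k\}$ and $V_2=\mathrm{span}\{x_{1,1}^j x_{1,-1}^{k-1-j}x_{1,\bar 0}\mid 0\le j\le k-1\}$, and note that since $SL(2)$ is a purely even subsupergroup, these two pieces are $SL(2)$-stable summands. Matching basis vectors directly, $V_1\cong H^0_{SL(2)}(k)$: the action of ${\bf e}^{(t)}$ and ${\bf f}^{(t)}$ from Lemma \ref{leftaction}(1) coincides with the action on the standard basis of the $SL(2)$-induced module used in Subsection 6.1.

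For $V_2$, the super-Leibniz rule together with the $SL(2)$-invariance of $x_{1,\bar 0}$ shows that the map $x_{1,1}^j x_{1,-1}^{k-1-j}x_{1,\bar 0}\mapsto x_{1,1}^j x_{1,-1}^{k-1-j}$ is an $SL(2)$-isomorphism onto $H^0_{SL(2)}(k-1)$ (understood as $0$ when $k=0$). Combining the two pieces yields the induced-module claim. The simple-module assertion then follows by restricting the same two identifications to the explicit basis of $L_-(k)$ furnished by Lemma \ref{soclewrtlower}: when $p\mid k$, only the even half of that basis survives and is in bijection with the standard basis of $L_{SL(2)}(k)$; when $p\nmid k$, both halves survive and the odd half, after peeling off the $SL(2)$-invariant factor $x_{1,\bar 0}$, is in bijection with the standard basis of $L_{SL(2)}(k-1)$.

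The only computation worth flagging is the verification that the divided powers ${\bf f}^{(t)}$ obey the expected Leibniz identity when one factor is $x_{1,\bar 0}$; this is routine given Lemma \ref{leftaction}(3), which forces all higher divided powers of ${\bf e},{\bf f}$ to annihilate $x_{1,\bar 0}$. I do not foresee a genuine obstacle: the whole argument reduces to bookkeeping with bases once the $SL(2)$-invariance of $x_{1,\bar 0}$ is recognized.
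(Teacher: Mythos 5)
Your computation itself is sound, and it follows a genuinely different route from the paper's. The paper disposes of the lemma in one line: compare characters and invoke the fact that $k$ and $k-1$ are never $SL(2)$-linked (Remark \ref{linksl2}), so the restriction decomposes into its even-weight and odd-weight block components. You instead exhibit the decomposition explicitly: by Lemma \ref{leftaction} the left $Dist(SL(2))$-action touches only column indices and kills $x_{1,\overline{0}}$, so $V_1$ and $V_2$ are $SL(2)$-submodules matching, basis vector by basis vector, the standard realizations of $H^0_{SL(2)}(k)$ and $H^0_{SL(2)}(k-1)$ inside $K[SL(2)]$, and the same identification restricted to the basis of $L_-(k)$ from Lemma \ref{soclewrtlower} handles the simple module. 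This is more informative than the paper's argument, since equality of characters alone does not determine the isomorphism type of the two block components, and the simple-module part of your argument (where the $p\mid k$ versus $p\nmid k$ dichotomy is genuine) is correct.

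The problem is the sentence ``Combining the two pieces yields the induced-module claim.'' What your argument actually proves is $H^0_-(k)|_{SL(2)}\cong H^0_{SL(2)}(k)\oplus H^0_{SL(2)}(k-1)$ for \emph{every} $k\geq 1$, with no dependence on divisibility: the summand $V_2$ is nonzero and isomorphic to $H^0_{SL(2)}(k-1)$ whether or not $p\mid k$. That contradicts the printed clause asserting that for $p\mid k$ the restriction is just $H^0_{SL(2)}(k)$; and indeed that clause cannot be literally true for $k>0$, because $\dim H^0_-(k)=2k+1$ by Proposition \ref{descriptionofinduced} while $\dim H^0_{SL(2)}(k)=k+1$ (the paper's own use of the lemma in the case $k\equiv 0\pmod p$ of Proposition \ref{simplesosp}, where $L_-(k-1)$ occurs in $H^0_-(k)$, also relies on the unconditional decomposition, and the paper's character comparison would likewise produce it). So the divisibility dichotomy in the first sentence of the lemma is real only for $L_-(k)$, not for $H^0_-(k)$. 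Your write-up asserts that the stated claim follows from your splitting without noticing that, in the $p\mid k$ case, what you proved disagrees with what you set out to prove; a correct submission must either flag and correct the induced-module clause or explain the discrepancy, rather than paper over it.
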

\begin{proof}
It suffices to compare the characters of the supermodules $H^0_-(k)$, $H^0_{SL(2)}(k)$ and $H^0_{SL(2}(k-1)$, and 
$L_-(k)$, $L_{SL(2)}(k)$ and $L_{SL(2)}(k-1)$, respectively, and to take into account that the weights $k$ and $k-1$ are not $SL(2)$-linked
(which is clear but also follows from Remark \ref{linksl2}).
\end{proof}

We need to introduce further notation related to Proposition \ref{simplesl2}. Consider a word $w\in W \setminus W_{-1}$ and the corresponding weight $\ell_k(w)$. 
We say that $w$ and $\ell_k(w)$ are of the {\it first kind} if $w_0="\3"$ (this is equivalent to $\ell_k(w)\equiv k-2a_0 \pmod{2p}$). 
We say that $w$ and $\ell_k(w)$ are of the {\it second kind} if $w_0="\1"$ (this is equivalent to $a_0\neq 0$ and $\ell_k(w)\equiv k\pmod{2p}$).  

\begin{pr}\label{simplesosp}
Let $k>0$. If $k \not \equiv 0,-1 \pmod p$, then the simple composition factors of $H^0_-(k)$ are $L_-(k)$, all $L_-(\ell_{k-1}(w))$ such that $w\in W$ is of the first kind and 
$\ell_{k-1}(w)\geq 0$, and 
all $L_-(\ell_k(w))$ such that $w\in W$ is of the second kind and $\ell_k(w)\geq 0$. 

If $k \equiv -1 \pmod p$, then the simple composition factors of $H^0_-(k)$ are $L_-(k)$, all $L_-(\ell_{k-1}(w))$ such that $w\in W$ is of the first kind and 
$\ell_{k-1}(w)\geq 0$, and 
all $L_-(\ell_k(w))$ such that $w\in W$ is of the first kind and $\ell_k(w)\geq 0$. 

If $k\equiv 0 \pmod p$, then the simple composition factors of $H^0_-(k)$ are $L_-(k)$, $L_-(k-1)$, all $L_-(\ell_{k-1}(w))$ such that $w\in W$ is of the first kind 
and $\ell_{k-1}(w)\geq 0$, and 
all $L_-(\ell_k(w))$ such that $w\in W$ is of the second kind and $\ell_k(w)\geq 0$. 
\end{pr}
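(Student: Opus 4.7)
The plan is to restrict $H^0_-(k)$ to the even subgroup $SL(2)=G_{ev}$ and compare simple $SL(2)$-composition factors on the two sides. By Lemma \ref{osptosl2}, $H^0_-(k)|_{SL(2)}$ decomposes as $H^0_{SL(2)}(k)\oplus H^0_{SL(2)}(k-1)$ (or as $H^0_{SL(2)}(k)$ in the case $p\mid k$), while each simple $L_-(\ell)$ restricts to $L_{SL(2)}(\ell)$ if $p\mid\ell$ and to $L_{SL(2)}(\ell)\oplus L_{SL(2)}(\ell-1)$ if $p\nmid\ell$. The simple $SL(2)$-factors of the two induced pieces are then enumerated by Proposition \ref{simplesl2} through the words $w\in W$, and the task reduces to matching these with restrictions of a list of candidate $L_-(\ell)$'s.

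The combinatorial heart of the proof is a direct comparison of $\ell_k(w)$ and $\ell_{k-1}(w)$ for a common word $w$, computed with respect to the $p$-adic expansions of $k+1$ and of $k$ respectively. In the generic case $k\not\equiv 0,-1\pmod p$ the first digit $a_0$ of $k+1$ satisfies $2\le a_0\le p-1$, the first digit of $k$ equals $a_0-1$, and all higher digits coincide. Substituting into $\ell_k(w)=k-\sum_{w_i="\3"}2a_ip^i-\sum_{w_i="\4"}2(a_i+1)p^i$ then yields $\ell_{k-1}(w)=\ell_k(w)+1$ when $w_0="\3"$ (first kind) and $\ell_{k-1}(w)=\ell_k(w)-1$ when $w_0="\1"$ (second kind). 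This shift is exactly what is needed to pair the two $SL(2)$-constituents of a single $L_-(\ell)$ with $p\nmid\ell$: one lies in $H^0_{SL(2)}(k)$ and the adjacent one in $H^0_{SL(2)}(k-1)$. A short congruence check via $\ell_k(w)\equiv k\pmod p$ for second-kind $w$ and $\ell_{k-1}(w)\equiv k-2a_0+1\pmod p$ for first-kind $w$ guarantees $p\nmid\ell$ precisely for those $\ell$ that must absorb two adjacent $SL(2)$-factors.

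The three cases of the proposition then come down to bookkeeping. In the generic case, $L_-(k)$ absorbs the two top factors $L_{SL(2)}(k)$ and $L_{SL(2)}(k-1)$, and every other $L_-(\ell)$ is furnished by either a second-kind word for $k$ (pairing the corresponding factor of $H^0_{SL(2)}(k)$ with a second-kind factor of $H^0_{SL(2)}(k-1)$) or a first-kind word for $k-1$ (pairing a first-kind factor of $H^0_{SL(2)}(k-1)$ with a first-kind factor of $H^0_{SL(2)}(k)$), which reproduces exactly the claimed list. When $k\equiv -1\pmod p$ the digit $a_0=0$ removes second-kind words of $k$ by the exception clause of Proposition \ref{simplesl2}, and first-kind words of $k$ take over this pairing role. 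The case $k\equiv 0\pmod p$ is where I expect the main difficulty: now $p\mid k$ so $L_-(k)$ contributes only $L_{SL(2)}(k)$, the missing $L_{SL(2)}(k-1)$ must be supplied by a separate $L_-(k-1)$, and the borrowing required in passing from the digits of $k+1$ to those of $k$ reindexes the surviving words. The delicate step is to track the removals and duplications imposed by the exceptions in Proposition \ref{simplesl2} (when some $a_i$ equals $0$ or $p-1$) and verify that they match the claimed list exactly.
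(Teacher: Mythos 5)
Your strategy is the same as the paper's: restrict to $SL(2)=G_{ev}$ via Lemma \ref{osptosl2}, enumerate the $SL(2)$-composition factors of $H^0_{SL(2)}(k)$ and $H^0_{SL(2)}(k-1)$ by Proposition \ref{simplesl2}, and glue two adjacent factors $L_{SL(2)}(\ell)\oplus L_{SL(2)}(\ell-1)$ into one $L_-(\ell)$ exactly when $p\nmid\ell$. Your generic case $k\not\equiv 0,-1\pmod p$ is correct and is essentially the paper's argument: with $a_0\geq 2$ the $p$-adic digits of $k$ are $(a_0-1,a_1,\ldots)$, so for one and the same word $w$ you get $\ell_{k-1}(w)=\ell_k(w)+1$ for first-kind $w$ and $\ell_{k-1}(w)=\ell_k(w)-1$ for second-kind $w$, and the same-word pairing, together with the congruences $\ell_k(w)\equiv k$ and $\ell_{k-1}(w)\equiv -a_0\pmod p$, produces the stated list.

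The gap is in the two exceptional cases, which you only gesture at, and you locate the difficulty in the wrong place. For $k\equiv 0\pmod p$ one has $a_0=1$, so there is no borrowing at all: the digits of $k$ are $(0,a_1,\ldots)$, the same-word shift $\ell_{k-1}(w)=\ell_k(w)+1$ still holds for first-kind words, and the only points to record are that second-kind words disappear from the $(k-1)$-list (their sets $S(w)$ are empty), that $\ell_k(w)\equiv 0\pmod p$ for second-kind $w$ (so these factors remain single), and the identification of $L_-(k-1)$. The genuinely delicate case is $k\equiv -1\pmod p$: there $a_0=0$, the passage from $k+1$ to $k$ borrows through the first $d(k)$ digits, $k=(p-1)+\cdots+(p-1)p^{d(k)-1}+(a_{d(k)}-1)p^{d(k)}+\cdots$, and your same-word comparison of $\ell_k(w)$ and $\ell_{k-1}(w)$ no longer applies. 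In that case the first-kind weights $\ell_{k-1}(w)$ are divisible by $p$ (so each restricts to a single $L_{SL(2)}$ and already gives an $L_-$ on its own), whereas the first-kind weights $\ell_k(w_1)$ are $\equiv -1\pmod p$, and one must show that the missing partner $L_{SL(2)}(\ell_k(w_1)-1)$ actually occurs in the $(k-1)$-list; the paper does this through an explicit word correspondence $w_1\leftrightarrow w_2$ replacing the prefix $\3\ldots\3\,w_{1,d(k)}$ (with $d(k)$ symbols $\3$) by $\1\2\ldots\2\,w_{2,d(k)}$, where $w_{1,d(k)}=\1$ becomes $w_{2,d(k)}=\2$ and $w_{1,d(k)}=\3$ becomes $\4$. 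Your sentence that ``first-kind words of $k$ take over this pairing role'' asserts the conclusion but supplies neither this correspondence nor the singlet/doublet dichotomy between the two first-kind families, and your final sentence explicitly defers the verification that the removals and duplications ``match the claimed list exactly'' --- which is precisely the content that needs proving. As written, the proposal establishes the proposition only in the generic case.
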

\begin{proof}
The proof follows from Proposition \ref{simplesl2} and Lemma \ref{osptosl2}. 

Assume that $p$ does not divide $k$ and consider all appearing supermodules as $SL(2)$-modules. Then $H^0_-(k)=H^0_{SL(2)}(k)\oplus H^0_{SL(2)}(k-1)$ and 
$L_-(k)=L_{SL(2)}(k)\oplus L_{SL(2)}(k-1)$. 

Assume further that $k\not \equiv -1 \pmod p$. Then $a_0>0$, $k=(k-1)+1=(a_0-1)+ a_1p+ \ldots + a_sp^s$ and $(k-1)-2(a_0-1)=k+1-2a_0\not\equiv 0 \pmod p$ shows that the two simple modules $L_{SL(2)}(\ell_k(w))$ and $L_{SL(2)}(\ell_{k-1}(w))$ for $w\in W$ of the first kind combine to form $L_-(\ell_{k-1}(w))$ corresponding to this $w\in W$. 
Therefore, all simple modules described in the first statement appear as composition factors of $H^0_-(k)$ and there are no other composition factors.

Now assume $k\equiv -1 \pmod p$. Then $d(k)>0$ and 
\begin{equation}\label{stein}
k=(p-1)+ (p-1)p+\ldots (p-1)p^{d(k)-1}+ (a_{d(k)}-1)p^{d(k)}+\ldots +a_sp^s.
\end{equation}

In this case there are no simple composition factors of $H^0_-(k)$ corresponding to $w\in W$ of the second kind. Consider $w\in W$ of the first kind. Then 
$(k-1)-2(p-1)\equiv 0 \pmod p$ and all simple composition factors $L_{SL(2)}(\ell_{k-1}(w))$ of $H^0_-(k)$ for $w\in W$ of the first kind are simple $SpO(2|1)$-supermodules. 
Using (\ref{stein}) we infer that for each simple $L_{SL(2)}(\ell_k(w_1))$, where $w_1\in W$ of the first kind, appearing as composition factor of $H^0_-(k)$, the simple
$L_{SL(2)}(\ell_k(w_1)-1)$ also appears as a composition factor $L_{SL(2)}(\ell_{k-1}(w_2))$ of $H^0_-(k)$ for appropriate $w_2\in W$ of the second kind. 
We note that all weights $\ell_k(w_1)$, where $w_1\in W$ is of the first kind, have $SL(2)$-defect equaled to $d(k)$.
The correspondence between words $w_1$ and $w_2$ is as follows. The entries at the positions $d(k)+1$ through $s$ in $w_1$ and $w_2$ are the same. 
The first entries of $w_1$ and $w_2$ are $\underbrace{\3\ldots \3}_{d(k)}w_{1,d(k)}$ and $\1\underbrace{\2\ldots \2}_{d(k)-1}w_{2,d(k)}$, respectively. 
For $w_{1,d(k)}="\1"$ the corresponding entry $w_{2,d(k)}="\2"$ and for $w_{1,d(k)}="\3"$ the corresponding entry $w_{2,d(k)}="\4"$.
Since $L_{SL(2)}(\ell_k(w_1))$ and $L_{SL(2)}(\ell_k(w_1)-1)$, for $w_1\in W$ of the first kind, combine to form $L_-(\ell_k(w_1))$, the second claim follows. 

Assume now that $k\equiv 0 \pmod p$. It is clear that $L_-(k-1)$ is a composition factor of $H^0_-(k)$.
Since $a_0=1$, we have $k=(k-1)+1=0+ a_1p+ \ldots + a_sp^s$ and $(k-1)-2(a_0-1)=k+1-2a_0\not\equiv 0 \pmod p$ shows that 
the simple supermodules $L_{SL(2)}(\ell_k(w))$ and $L_{SL(2)}(\ell_{k-1}(w))$ for $w\in W$ of the first kind combine to form $L_-(\ell_{k-1}(w))$ corresponding to this $w\in W$. 
There are no composition factors corresponding to $\ell_{k-1}(w)$ for $w\in W$ of the second kind because $a_0-1=0$. 
All composition factors corresponding to $w\in W$ of second kind satisfy 
$\ell_k(w)\equiv 0 \pmod p$. Therefore, such simple $L_{SL(2)}(\ell_k(w))$ are simple $SpO(2|1)$-supermodules. 
The claim follows.
\end{proof}

\begin{rem}
We would like to direct reader's attention to the fact that if $p|k$, then the weights $k$ and $k-1$ are $SpO(2|1)$-linked. This is the reason why the defect of weights known 
from $SL(2)$-linkage does not play a role in the $SpO(2|1)$-linkage.
\end{rem}

For $0\leq k <p$ denote by $\mathscr{B}_k$ the set of all nonnegative integers $l$ such that $l\equiv k \pmod{2p}$ or $l\equiv 2p-1-k \pmod{2p}$. 

\begin{pr}\label{blocksosp}
The sets $\mathscr{B}_0, \ldots \mathscr{B}_{p-1}$ are blocks of the category of $SpO(2|1)$-supermodules.
\end{pr}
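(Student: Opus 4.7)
The plan is to verify three things: (i) the sets $\mathscr{B}_0,\ldots,\mathscr{B}_{p-1}$ partition the set $\mathbb{Z}_{\geq 0}$ of dominant weights, (ii) every linkage between simple supermodules stays inside a single $\mathscr{B}_k$, and (iii) any two weights in the same $\mathscr{B}_k$ are joined by a chain of links. Part (i) is elementary, since the residue classes $\{k, 2p-1-k\}$ for $0 \leq k \leq p-1$ partition $\mathbb{Z}/2p\mathbb{Z}$.

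For (ii), by Proposition \ref{simplelinkage} it suffices to show that every composition factor $L_-(\ell)$ of $H^0_-(l)$ has $\ell$ in the same $\mathscr{B}_k$ as $l$. The formula $\ell_k(w) = k - \sum_{w_i="\3"} 2a_ip^i - \sum_{w_i="\4"} 2(a_i+1)p^i$ shows that modulo $2p$ only the $i=0$ term matters: second-kind words ($w_0="\1"$) give $\ell_k(w) \equiv k \pmod{2p}$, while first-kind words ($w_0="\3"$) give $\ell_k(w) \equiv k-2a_0 \pmod{2p}$, where $a_0$ is the units $p$-adic digit of $k+1$. I would then walk through the three cases of Proposition \ref{simplesosp} and verify in each case that every second-kind composition factor weight has residue $\equiv l\pmod{2p}$ while every first-kind composition factor weight $\ell_{l-1}(w)$ has residue $\equiv -1-l \pmod{2p}$ (with the extra composition factor $L_-(l-1)$ available when $p|l$ contributing the residue $l-1$). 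All these residues lie in the pair $\{l \bmod 2p,\ -1-l \bmod 2p\}$, which is exactly the definition of $\mathscr{B}_k$.

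For (iii), I would produce explicit composition-factor links. In a regular block $\mathscr{B}_k$ with $1 \leq k \leq p-2$ and $l = k+2pj$ with $j \geq 1$, the first-kind word $w = \3\1\2\cdots\2$ applied to $l-1$ produces a composition factor of $H^0_-(l)$ of weight $(l-1) - 2(l \bmod p) = 2p-1-k+2p(j-1)$, linking $l$ to its partner residue. An analogous argument starting from $l = 2p-1-k+2pj$ produces the composition factor $L_-(k+2pj)$. Iterating gives a zigzag $k \leftrightarrow 2p-1-k \leftrightarrow k+2p \leftrightarrow 4p-1-k \leftrightarrow \cdots$ that exhausts $\mathscr{B}_k$. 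The special blocks $\mathscr{B}_0$ and $\mathscr{B}_{p-1}$ are handled the same way, using the special clauses of Proposition \ref{simplesosp}: the composition factor $L_-(l-1)$ of $H^0_-(l)$ arising when $p|l$ connects the residues $0$ to $2p-1$ (respectively $p$ to $p-1$), and e.g.\ $H^0_-(2p-1)$ has a first-kind composition factor $L_-(0)$ giving the missing link down to $0$. The main obstacle is the systematic bookkeeping in part (ii): one must confirm that the residue-mod-$2p$ computation is uniform across all three clauses of Proposition \ref{simplesosp}, particularly at the boundary values $a_0 \in \{0,1\}$ (corresponding to $l \equiv -1$ or $l \equiv 0 \pmod p$) where the enumeration of valid first- and second-kind words changes.
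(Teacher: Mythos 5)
Your proposal is correct and follows essentially the same route as the paper's proof: containment of all composition-factor weights of $H^0_-(l)$ in a single $\mathscr{B}_k$ (which the paper dismisses as ``easy to verify'' and you justify via Proposition \ref{simplelinkage} and the residue of $\ell_k(w)$ modulo $2p$), followed by the same zigzag of links $k+2pj \leftrightarrow 2p-1-k+2p(j-1)$ and $2p-1-k+2pj \leftrightarrow k+2pj$ extracted from Proposition \ref{simplesosp}, with the $p\mid l$ clause handling the boundary blocks exactly as the paper does.
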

\begin{proof}
It is easy to verify that all weights of simple composition factors of the induced $SpO(2|1)$-supermodule $H^0_-(k)$ belong to the set $\mathscr{B}_{a_0}$. On the other hand, for $t\geq 0$
the weights $a_0+2pt$ and $k=2p-1-a_0+2pt$ are linked because the simples $L_-(a_0+2pt)$ and $L_-(2p-1-a_0+2pt)$ are composition factors of $H_-^0(2p-1-a_0+2pt)$ according to Proposition \ref{simplesosp}.
Indeed, if $a_0=p-1$, then $k$ is divisible by $p$ and $a_0+2pt=k-1$; otherwise $a_0+2pt=\ell_{k-1}(w)$ for the single element $w\in W_0$. 

The weights $k=2p+a_0+2pt$ and $2p-1-a_0+2pt$ are linked because the simples $L_-(2p-1-a_0+2pt)$ and $L_-(2p+a_0+2pt)$ are composition factors of $H_-^0(2p+a_0+2pt)$ according to 
Proposition \ref{simplesosp}. 
Indeed, if $a_0=0$, then $k$ is divisible by $p$ and $2p-1-a_0+2pt=k-1$; otherwise $2p-1-a_0+2pt=\ell_{k-1}(w)$ for the single element $w\in W_0$. 
\end{proof}

\begin{pr}\label{kercokk-1}
Let $p$ divides $k$. Then the kernel of $\psi_{-k,k-1}$ has simple composition factors $L_-(k)$ and $L_-(\ell_k(w))$, where $w\in W$ is of the second kind,
and the cokernel of $\psi_{-k,k-1}$ has simple composition factors $L_-(\ell_{k-2}(w))$, 
where $w\in W$ is of the first kind.

Consequently, the image of $\psi_{-k,k-1}$ has simple composition factors $L_-(k-1)$ and $L_-(\ell_{k-1}(w))$, where $w\in W$ is of the first kind.
\end{pr}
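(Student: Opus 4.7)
The plan is to work out the kernel and cokernel of $\psi_{-k,k-1}$ at the level of explicit bases by specializing Corollary~\ref{korko} and Lemma~\ref{towardlinkage+} to $j=0$, then to identify the composition factors of each by matching characters against the lists supplied by Proposition~\ref{simplesosp}. The statement about the image is then a formal consequence via $[\mathrm{im}\,\psi]=[H^0_-(k-1)]-[\mathrm{coker}\,\psi]$ and $[\ker\psi]=[H^0_+(-k)]-[\mathrm{im}\,\psi]$.

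First I would apply Corollary~\ref{korko} with $j=0$, which is allowed since $p\mid k$: the kernel has basis $\{x_{-1,-1}^i x_{-1,1}^{k-i} : i\in\{0,p,2p,\ldots,k\}\}$ with no element containing an $x_{-1,\overline 0}$-factor, so
\[
\chi(\ker\psi_{-k,k-1})=\sum_{j=0}^{k/p}x^{k-2jp}.
\]
By Lemma~\ref{towardlinkage+}, the image of $\psi_{-k,k-1}$ hits the entire even $SL(2)$-part of $H^0_-(k-1)$ and hits the odd part $A\cdot x_{1,\overline 0}$ in the span of $(j+1)\,x_{1,1}^{k-2-j}x_{1,-1}^j x_{1,\overline 0}$ for $0\leq j\leq k-2$. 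Hence the cokernel has basis $\{x_{1,1}^{k-2-j}x_{1,-1}^j x_{1,\overline 0} : p\mid j+1,\ 0\leq j\leq k-2\}$ and
\[
\chi(\mathrm{coker}\,\psi_{-k,k-1})=\sum_{m=1}^{k/p-1}x^{k-2mp}.
\]

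Second I would invoke Proposition~\ref{simplesosp} in the case $k-1\equiv -1\pmod p$: the composition factors of $H^0_-(k-1)$ are $L_-(k-1)$, $L_-(\ell_{k-1}(w))$ for $w$ first kind, and $L_-(\ell_{k-2}(w))$ for $w$ first kind. The cokernel is a quotient of $H^0_-(k-1)$, so its composition factors come from this list. For first-kind $w$, the digit $a_0$ in the $p$-adic expansion of $k$ vanishes (because $p\mid k$), whence $\ell_{k-1}(w)\equiv k-1\equiv -1\pmod p$. Since every weight appearing in $\chi(\mathrm{coker})$ is divisible by $p$, the multiplicities of $L_-(k-1)$ and of the $L_-(\ell_{k-1}(w))$ in the cokernel must vanish. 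The only remaining candidates are the $L_-(\ell_{k-2}(w))$ for first-kind $w$, and a direct character comparison (using Proposition~\ref{simplesl2} applied to $k-2$ together with the $SL(2)$-description of $L_-$ from Lemma~\ref{osptosl2}) shows each contributes with multiplicity exactly one, proving the second assertion.

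Finally, the third assertion follows from $[\mathrm{im}\,\psi]=[H^0_-(k-1)]-[\mathrm{coker}\,\psi]=[L_-(k-1)]+\sum_{w\ \text{1st kind}}[L_-(\ell_{k-1}(w))]$, and the first assertion follows from $[\ker\psi]=[H^0_+(-k)]-[\mathrm{im}\,\psi]$, using the isomorphism $H^0_+(-k)\cong H^0_-(k)$ of Lemma~\ref{when k=l} and Proposition~\ref{simplesosp} in the case $p\mid k$. The delicate step is the character identity $\chi(\mathrm{coker})=\sum_{w\ \text{1st kind}}\chi(L_-(\ell_{k-2}(w)))$, which requires unwinding the recursive construction of the words $W$ together with Kummer's theorem to match the weights divisible by $p$ contributed by each $L_-(\ell_{k-2}(w))$.
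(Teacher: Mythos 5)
Your bookkeeping is correct: the $j=0$ specialization of Corollary~\ref{korko} and Lemma~\ref{towardlinkage+} does give the kernel basis $\{x_{-1,-1}^{i}x_{-1,1}^{k-i}: p\mid i\}$ and the cokernel basis $\{x_{1,1}^{k-2-j}x_{1,-1}^{j}x_{1,\overline{0}}: p\mid j+1\}$ with the characters you state, the congruence argument correctly rules out $L_-(k-1)$ and the first-kind $L_-(\ell_{k-1}(w))$ as factors of the cokernel, and the Grothendieck-group subtractions at the end are legitimate. The organizational difference from the paper is minor: the paper identifies the kernel and cokernel factors directly and independently by the same $p$-adic weight matching, whereas you identify only the cokernel and recover image and kernel by subtraction; both are fine.

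However, there is a genuine gap at the one place that carries the whole content of the proposition: you assert, but do not prove, the character identity $\chi(\mathrm{coker}\,\psi_{-k,k-1})=\sum_{w\ \mathrm{1st\ kind}}\chi\bigl(L_-(\ell_{k-2}(w))\bigr)$, each factor with multiplicity one. Your congruence elimination only bounds the cokernel factors from above (they lie among the first-kind $L_-(\ell_{k-2}(w))$); it does not show that all of them occur, and without that neither the image nor the kernel statement follows from your subtractions. This identification is exactly what the paper's proof consists of, and it is shorter than your closing remark suggests: the cokernel weights are $(k-2)-2s$ with $0\le s\le k-2$ and $s\equiv -1\pmod p$ (your $s=j+1-1=mp-1$); since the $0$-th $p$-adic digit of $(k-2)+1=k-1$ is $p-1$, the condition $s\equiv -1\pmod p$ is precisely the condition ``$s_0\ge b_0$'' defining the sets $S(w)$ with $w_0$ the third symbol, and by the partition of the weight range established in (the proof of) Proposition~\ref{simplesl2} these sets are exactly the weight supports of the $L_{SL(2)}(\ell_{k-2}(w))$ for $w$ of the first kind, each covered once. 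Finally one must note, as the paper does, that $\ell_{k-2}(w)\equiv k-2p\equiv 0\pmod p$ for such $w$, so that by Lemma~\ref{osptosl2} each $L_{SL(2)}(\ell_{k-2}(w))$ is already a simple $SpO(2|1)$-supermodule $L_-(\ell_{k-2}(w))$; without this observation matching $SL(2)$-characters would not yet determine the $SpO(2|1)$-composition factors. Supplying these two sentences closes your gap and makes your proof essentially the paper's.
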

\begin{proof}
According to Corollary \ref{korko}, the kernel of $\psi_{-k,k-1}$ has weights $k-2i$, where $0\leq i \leq k$ is such that $i$ is a multiple of $p$. 
Write $k=(a_0-1)+\ldots +a_sp^s$ as before and $i=i_0+i_1p+ \ldots +i_sp^s$, where $0\leq i_j<p$. Then indices $i$ divisible by $p$ are characterized by $i_0\leq a_0-1$ and 
$i_1, \ldots i_s$ arbitrary. Therefore the kernel of $\psi_{-k,k-1}$ consists of $L_-(\ell_k(w))$, where $w\in W$ satisfies $w_0="\1"$.

Also, according to Corollary \ref{korko}, the cokernel of $\psi_{-k,k-1}$ has weights $k-2-2(i-1)$, where $0< i < k$ is such that $i$ is a multiple of $p$. 
Let $k-2=(b_0-1)+b_1p+\ldots +b_sp^s$ and $k=k_0+k_1p+\ldots+ k_sp^s$ be $p$-adic expansions of $k-2$ and $k=i-1$.
Then we can characterize the weights $k-2-2k$ as those weights in the module $H^0_{SL(2)}(k-2)$ satisfying $k_0\geq b_0$ and $k_1, \ldots, k_s$ are arbitrary.
Therefore, the cokernel of $\psi_{-k,k-1}$ consists of $L_{SL(2)}(\ell_{k-2}(w))$, where $w\in W$ satisfies $w_0="\3"$. Since for every $w\in W$ such that $w_0="\3"$ we have
$\ell_{k-2}(w)$ divisible by $p$, we derive that $L_{SL(2)}(\ell_{k-2}(w))=L_-(\ell_{k-2}(w))$ and conclude the proof of the first part of our claim.
The second part of the claim follows from Proposition \ref{simplesosp}.
\end{proof}

Note that the set of weights $\ell_{k-2}(w)$, where $w\in W$ is of the first kind is the same as the set of weights $\ell_{k-2p}(w)$, where $w\in W$ is of the second kind.

\begin{pr}\label{kercokk-1-2j}
Let $0<j<p^{a(k-j)}$, $j=(a_0-1)+\ldots + a_{t-1}p^{t-1}$. Then the image of $\psi_{-k,k-1-2j}$ has simple composition factors $L_-(\ell_{k-1}(w))$, where $w\in W$ 
satisfies $w_0\ldots w_{t-1}="\3\ldots \3"$.
These simple composition factors are also described as $L_-(\ell_{k-1-2j}(w))$, where $w\in W$ satisfies $w_0\ldots w_{t-1}="\1\underbrace{\2\ldots \2}_{t-1}"$.
\end{pr}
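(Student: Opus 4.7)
The plan is to read off the image explicitly from Lemma \ref{towardlinkage+} and Corollary \ref{korko}, and then to match its character against the decomposition of $H^0_-(k-1-2j)$ provided by Proposition \ref{simplesosp}. First I would write down a weight basis of $\mathrm{im}(\psi_{-k,k-1-2j})$: by Lemma \ref{towardlinkage+}, the surviving images of the basis of $H^0_+(-k)$ are the even-weight vectors $x_{1,1}^{k-i-1-j}x_{1,-1}^{i-j}$ of weight $k-1-2i$, for those $i$ with $j\leq i\leq k-j-1$ and $\binom{i}{j}\not\equiv 0\pmod p$, together with the odd-weight vectors $x_{1,1}^{k-i-1-j}x_{1,-1}^{i-1-j}x_{1,\overline{0}}$ of weight $k-2i$, for those $i$ with $j+1\leq i\leq k-j-1$ and $i\binom{i-1}{j}\not\equiv 0\pmod p$. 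Corollary \ref{korko} confirms that these index sets are precisely complementary to the kernel.

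Next, since the image is a $G$-submodule of $H^0_-(k-1-2j)$ and every weight space of every simple composition factor listed in Proposition \ref{simplesosp} is one-dimensional, identifying the composition factors of the image reduces to a purely character-theoretic question: which of the simples $L_-(\mu)$ appearing in $H^0_-(k-1-2j)$ have their full weight support contained in the image. I would translate the divisibility conditions $\binom{i}{j}\not\equiv 0\pmod p$ and $i\binom{i-1}{j}\not\equiv 0\pmod p$ via Kummer's theorem (as used in Lemma \ref{Fero's responsibility}) into constraints on the lowest $t$ $p$-adic digits of $i$. Given the assumed form $j=(a_0-1)+a_1p+\ldots +a_{t-1}p^{t-1}$, these are exactly the digit inequalities encoded by the symbol $\3$ in the first $t$ positions of the word $w$ from Proposition \ref{simplesl2}. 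Rewriting the resulting weights in the form $\ell_{k-1}(w)$ then yields precisely the words satisfying $w_0\ldots w_{t-1}="\3\ldots \3"$.

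For the alternative description, I would verify that for each admissible $w_1$ with $w_{1,0}\ldots w_{1,t-1}="\3\ldots \3"$ there is a unique $w_2$ with $w_{2,0}\ldots w_{2,t-1}="\1\underbrace{\2\ldots\2}_{t-1}"$ such that $\ell_{k-1}(w_1)=\ell_{k-1-2j}(w_2)$; this is a routine comparison of the two defining formulas for $\ell_{\,\cdot\,}(w)$, once one observes that passing from $k-1$ to $k-1-2j$ shifts exactly the first $t$ $p$-adic batches and is compensated by replacing the initial $\3$-block by the corresponding $\1\,\2\ldots\2$-block, mirroring the correspondence between words of the first and second kinds that already appears in the proof of Proposition \ref{simplesosp}.

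The main obstacle I anticipate is the digitwise bookkeeping. One must show that the two families of admissible indices $i$ (coming from the even and odd kinds of basis vectors) combine in lockstep with the even/odd refinement of Proposition \ref{simplesosp} over Proposition \ref{simplesl2}, so that the character of the image decomposes cleanly into the prescribed simple factors with multiplicity one. Handling the boundary regimes where some digit $a_i$ equals $0$ or $p-1$ will require the same kind of case distinction as in the proof of Proposition \ref{simplesl2}, but no new idea beyond the Kummer translation should be needed.
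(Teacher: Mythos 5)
Your proposal is correct and follows essentially the same route as the paper's proof: read off the image's weight basis from Lemma \ref{towardlinkage+} (and Corollary \ref{korko}), convert the conditions $p\nmid\binom{i}{j}$ and $p\nmid i\binom{i-1}{j}$ via Kummer's theorem into the $p$-adic digit inequalities giving the initial block of symbols $\3$, combine the even and odd families into the $SpO(2|1)$-simples of Proposition \ref{simplesosp}, and obtain the second description by the explicit word correspondence between $\ell_{k-1}(w)$ and $\ell_{k-1-2j}(z)$. The one point you defer to "bookkeeping" — the case $j\equiv p-1\pmod p$, where $i\binom{i-1}{j}=(j+1)\binom{i}{j+1}$ forces the odd family to be empty and the relevant weights are divisible by $p$ so that $L_-(\ell_{k-1}(w))=L_{SL(2)}(\ell_{k-1}(w))$ — is precisely the case distinction the paper carries out, so no idea is missing.
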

\begin{proof}
Using Lemma \ref{towardlinkage+} we determine that the image of $\psi_{-k,k-1-2j}$ has weights of two types. First type is of the form $k-2i$, where $j+1\leq i\leq k-1-j$ are such that 
$i\binom{i-1}{j}$ is not divisible by $p$ and the second type is of the form $k-1-2i$, where $j\leq i\leq k-1-j$ are such that $\binom{i}{j}$ is not divisible by $p$.

Assume first that $j\not\equiv p-1\pmod p$, which implies $a_0>0$.

Let $k=(a_0-1)+a_1p+\ldots +a_sp^s$, $j=(a_0-1)+a_1p+\ldots +a_{t-1}p^{t-1}$, where $t\leq a(k-j)$, and $i=i_0+i_1p+\ldots + i_sp^2$ be $p$-adic expansions of $m$, $j$ and $i$.

Since $\binom{i}{j}$ is not divisible by $p$ if and only if there is no $p$-adic carry when $j$ is added to $i-j$, this happens if and only if 
$i_0\geq a_0-1, i_1\geq a_1, \ldots, i_{t-1}\geq a_{t-1}$. Therefore, the set of weights of the second type correspond to simple supermodules $L_{SL(2)}(\ell_{k-1}(w))$, where 
$w\in W$ satisfies
$w_0\ldots w_{t-1}="\3\ldots \3"$.

Since $i\binom{i-1}{j}=(j+1)\binom{i}{j+1}$ and $j+1$ is not divisible by $p$, we derive that $i\binom{i-1}{j}$ is not divisible by $p$, if and only if, 
there is no $p$-adic carry when $j+1$ is added to $i-j-1$. Since $j+1=a_0+a_1p+\ldots+a_{t-1}p^{t-1}$, this happens if and only if $i_0\geq a_0, \ldots, i_{t-1}\geq a_{t-1}$.

Therefore the set of weights of the first type correspond to simple supermodules $L_{SL(2)}(\ell_k(w))$, where $w\in W$ satisfies $w_0\ldots w_{t-1}="\3\ldots \3"$.

Combining the simples $L_{SL(2)}(\ell_k(w))$ and $L_{SL(2)}(\ell_{k-1}(w))$ for $w\in W$ 
such that $w_0\ldots w_{t-1}="\3\ldots \3"$ we obtain $L_-(\ell_{k-1}(w))$, where $w\in W$, proving the first claim in the case $j\not\equiv p-1\pmod p$.

Now assume $j\equiv p-1 \pmod p$. 

Let $k=p-1+b_1p+\ldots +b_sp^s$, $j=p-1+b_1p+\ldots +b_{t-1}p^{t-1}$, where $t\leq a(k-j)$, and $i=i_0+i_1p+\ldots + i_sp^2$ be $p$-adic expansions of $m$, $j$ and $i$.
The set of weights of the second type correspond to simple supermodules $L_{SL(2)}(\ell_{k-1}(w))$, where $w\in W$ satisfies
$w_0\ldots w_{t-1}="\3\ldots \3"$. The set of weights of the first type is empty since $i\binom{i-1}{j}=(j+1)\binom{i}{j+1}$ is divisible by $p$.
In this case the weights $\ell_{k-1}(w)$, where $w\in W$ is such that $w_0\ldots w_{t-1}="\3\ldots \3"$, are divisible by $p$ because $k-1-2(p-1)\equiv 0 \pmod p$. This implies
that $L_-(\ell_{k-1}(w))=L_{SL(2)}(\ell_{k-1}(w))$ for such $w$ and concludes the proof of the first part of our claim.

To show the second claim it is enough to observe that 
the weight $\ell_{k-1}(w)$ for a word $w\in W$ such that $w="\underbrace{\3\ldots \3}_tw_t\ldots w_{s}"$ equals the weight $\ell_{k-1-2j}(z)$ for a word  
$z\in W$ such that $z="\1\underbrace{\2\ldots \2}_{t-1}z_t\ldots z_{s}"$, where $z_k=w_k$ for $k>t$, $z_t="\2"$ if $w_t="\1"$ and $z_t="\4"$ if $w_t="\3"$.
\end{proof}

\section{Representations of $G_r T$ for $G=SpO(2|1)$}
Throughout this section $G=SpO(2|1)$. 

\subsection{Induced and simple supermodules}

An element $g\in G$ belongs to $G_r$ if and only if $g_{11}^{p^r}=g_{-1, -1}^{p^r}=1$ and $g_{-1, 1}^{p^r}=g_{1, -1}^{p^r}=0$. 
Consequently, $g\in G$ belongs to $G_r T$ if and only if $g_{-1, 1}^{p^r}=g_{1, -1}^{p^r}=0$. Indeed, the necessary condition is obvious. Conversely, if 
$g_{-1, 1}^{p^r}=g_{1, -1}^{p^r}=0$, then 
$1=(g_{11}g_{-1, -1}-g_{1, -1}g_{-1, 1}+g_{1, \overline{0}}g_{-1, \overline{0}})^{p^r}=(g_{11}g_{-1, -1})^{p^r}$ 
implies that $g_{11}$ is invertible. Thus
\[\left(\begin{array}{ccc}
g_{11} & g_{1, -1} & g_{1, \overline{0}} \\
g_{-1, 1} & g_{-1, -1} & g_{-1, \overline{0}} \\
g_{\overline{0}, 1} & g_{\overline{0}, -1} & g_{\overline{0}, \overline{0}}
\end{array}\right)=\left(\begin{array}{ccc}
1 & g_{1, -1}g_{11} & g_{1, \overline{0}} \\
g_{-1, 1}g_{11}^{-1} & g_{-1, -1}g_{11} & g_{-1, \overline{0}} \\
g_{\overline{0}, 1}g_{11}^{-1} & g_{\overline{0}, -1}g_{11} & g_{\overline{0}, \overline{0}}
\end{array}\right)\left(\begin{array}{ccc}
g_{11} & 0 & 0 \\
0 & g_{11}^{-1} & 0 \\
0 & 0 & 1
\end{array}\right),\]
hence $g\in G_r T$. 

The following lemma is now evident.
\begin{lm}\label{K[G_r T]}
The superalgebra $K[G_r T]$ is generated by the elements $x_{i j}$ for $i, j\in\{1, -1\}$, and $x_{1, \overline{0}}, x_{-1, \overline{0}}$ that are subject to the relations
\[x_{11}x_{-1, -1}-x_{1, -1}x_{-1, 1}+
x_{1, \overline{0}}x_{-1, \overline{0}}=1, x_{1, -1}^{p^r}=x_{-1, 1}^{p^r}=0.\]
\end{lm}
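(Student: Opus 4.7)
The plan is simply to read off the presentation from the characterization of $G_rT$ established in the paragraph immediately preceding the lemma. There it is shown that for every $A \in \mathsf{SAlg}_K$,
\[G_rT(A) \;=\; \{\, g \in G(A) \mid g_{1,-1}^{p^r} = g_{-1,1}^{p^r} = 0\,\},\]
with the remaining diagonal entry $g_{1,1}$ being automatically invertible thanks to the Berezinian-type relation $g_{1,1}g_{-1,-1} - g_{1,-1}g_{-1,1} + g_{1,\overline{0}}g_{-1,\overline{0}} = 1$ modulo $p^r$-th powers. This is the only nontrivial content in the lemma.

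Given this, I would translate the set-theoretic description into a description of the defining ideal. The assignment $A \mapsto G_rT(A)$ coincides with the closed subfunctor of $G$ cut out by the two elements $x_{1,-1}^{p^r}$ and $x_{-1,1}^{p^r}$ of $K[G]$, because evaluation of these coordinate functions at a point $g \in G(A)$ returns exactly the entries $g_{1,-1}^{p^r}$ and $g_{-1,1}^{p^r}$. By Yoneda's lemma the closed immersion $G_rT \hookrightarrow G$ is therefore dual to the quotient map
\[K[G] \;\twoheadrightarrow\; K[G]/(x_{1,-1}^{p^r},\, x_{-1,1}^{p^r}),\]
and hence $K[G_rT] \cong K[G]/(x_{1,-1}^{p^r}, x_{-1,1}^{p^r})$. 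Substituting the presentation of $K[G]$ from Lemma \ref{simplifiedform} immediately yields the generators $x_{ij}$, $x_{1,\overline{0}}$, $x_{-1,\overline{0}}$ together with the relations displayed in the lemma.

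The only conceptual point worth highlighting—and the reason the author deems the lemma ``evident''—is that $(x_{1,-1}^{p^r}, x_{-1,1}^{p^r})$ is automatically a Hopf superideal, which is forced once one knows it is the defining ideal of a closed subsupergroup. I anticipate no real obstacle here; the entire lemma reduces to matching the closed subfunctor description with the explicit coordinate presentation. If any step merits extra care it is the verification that the ideal generated by these two $p^r$-th powers really exhausts the kernel of $K[G] \to K[G_rT]$ (and is not a proper Hopf subideal of it), but this follows at once from the agreement of the two subfunctors on all $A \in \mathsf{SAlg}_K$.
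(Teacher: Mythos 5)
Your argument is correct and is exactly the route the paper intends: the paper itself offers no proof beyond declaring the lemma ``evident'' after the functor-of-points characterization $G_rT(A)=\{g\in G(A)\mid g_{1,-1}^{p^r}=g_{-1,1}^{p^r}=0\}$ established in the preceding paragraph, which you combine (via the standard Yoneda/ideal-matching step) with the presentation of $K[G]$ from Lemma \ref{simplifiedform}. Your explicit identification $K[G_rT]\cong K[G]/(x_{1,-1}^{p^r},x_{-1,1}^{p^r})$ just spells out what the paper leaves implicit, so there is nothing to add.
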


The isomorphism of superschemes $\phi : G_r T\to B_r^- T\times U^+_r$ is given by
\[\left(\begin{array}{ccc}
g_{11} & g_{1, -1} & g_{1, \overline{0}} \\
g_{-1, 1} & g_{-1, -1} & g_{-1, \overline{0}} \\
g_{\overline{0}, 1} & g_{\overline{0}, -1} & g_{\overline{0}, \overline{0}}
\end{array}\right)\mapsto\]\[\left(\begin{array}{ccc}
g_{11} & 0 & 0 \\
g_{-1, 1} & g_{11}^{-1} & g_{-1, \overline{0}}-g_{-1, 1}g_{11}^{-1}g_{1, \overline{0}}\\
g_{-1, 1}g_{1, \overline{0}}-g_{11}g_{-1, \overline{0}} & 0 & 1\end{array}\right)\times
\left(\begin{array}{ccc}
1 & g_{11}^{-1}g_{1, -1} & g_{11}^{-1}g_{1, \overline{0}}\\
0 & 1 & 0 \\
0 & g_{11}^{-1}g_{1, \overline{0}} & 1\end{array}\right).\]

It follows from the above and from Lemma \ref{K[G_r T]} that the superalgebra $K[B_r^- T]$ is generated by elements
$y_{11}^{\pm 1}, y_{-1, 1}$ and $y_{-1, \overline{0}}$ subject to the relation $y_{-1, 1}^{p^r}=0$.
Similarly, the superalgebra $K[U^+_r]$ is generated by elements $u_{1, -1}, u_{1, \overline{0}}$ subject to the relation $u_{1, -1}^{p^{r}}=0$.

The dual superalgebra morphism $\phi^* : K[B^-_r T]\otimes K[U^+_r]\to K[G_r T]$ is defined on the generators as
\[\phi^*(y_{11})=x_{11}, \phi^*(y_{-1, 1})=x_{-1, 1}, \phi^*(y_{-1, \overline{0}})=x_{-1, \overline{0}}-x_{-1, 1}x_{11}^{-1}x_{1, \overline{0}},\]
\[\phi^*(u_{1, -1})=x_{11}^{-1}x_{1, -1} \mbox{ and } \phi^*(u_{1, \overline{0}})=x_{11}^{-1}x_{1, \overline{0}}.\]
\begin{lm}\label{Frobsimple-}
The induced supermodule $H^0_{r, -}(l)=ind^{G_r T}_{B_r^- T} K_l$ has a basis
\[x_{11}^{l-k-\epsilon}x_{1, -1}^k x_{1, \overline{0}}^{\epsilon} \mbox{ for } 0\leq k < p^r \mbox{ and } \epsilon=0, 1.\]
Moreover, if $p|l$, then the socle $L_{r, -}(l)$ of $H^0_{r, -}(l)$ has a basis
\[x_{11}^{l-k}x_{1, -1}^k \mbox{ for } 0\leq k < p^r \mbox{ such that } p \not | \binom{l}{k};\]
if $p\not |l$, then it has a basis
\[x_{11}^{l-k}x_{1, -1}^k \mbox{ for } 0\leq k < p^r \mbox{ such that } p\not |\binom{l}{k}, \mbox{ and } \]
\[x_{11}^{l-1-t}x_{1, -1}^t x_{1, \overline{0}} \mbox{ for } 0\leq t< p^r \mbox{ such that } p\not |\binom{l-1}{t}.\]
\end{lm}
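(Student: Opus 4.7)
The plan is to reproduce the analysis of Section 4 for the full group $G$, keeping careful track of the truncations inherent to the Frobenius thickening $G_rT$.

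For the basis of $H^0_{r,-}(l)$, I would first use Lemma \ref{induced} to identify $H^0_{r,-}(l)$ with the subspace of $K[G_rT]$ consisting of weight-$l$ elements $f$ satisfying $f\mathbf{y}=0$ and $f\mathbf{f}^{(t)}=0$ for $1\leq t<p^r$. Introducing $x'_{11},x'_{1,-1}$ as in Section 4, and observing that $(1+x_{1,\overline{0}}x_{-1,\overline{0}})^{p^r}=1$ in characteristic $p$ (since the square of $x_{1,\overline{0}}x_{-1,\overline{0}}$ vanishes), one obtains the decomposition $K[G_rT]\simeq A_r\otimes K[x_{1,\overline{0}},x_{-1,\overline{0}}]$, where $A_r\simeq K[SL(2)_rT]$ is cut out by the relation $D=1$ together with $x'^{p^r}_{1,-1}=x_{-1,1}^{p^r}=0$. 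Writing $f=f_0+f_1x_{1,\overline{0}}+f_{-1}x_{-1,\overline{0}}+f_{1,-1}x_{1,\overline{0}}x_{-1,\overline{0}}$ with $f_i\in A_r$, the argument of Lemma \ref{nullspaceof y} together with formula \eqref{sstar} truncated at $t<p^r$ yields $f_{-1}=0$, the relation $f_0\mathbf{y}+f_{1,-1}x_{1,\overline{0}}=0$ (determining $f_{1,-1}$ from $f_0$), and the fact that $f_0$ and $f_1$ lie in $ind^{SL(2)_rT}_{B'^-_rT}K_l$ and $ind^{SL(2)_rT}_{B'^-_rT}K_{l-1}$ respectively. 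The standard basis of these truncated induced modules consists of $x_{11}^{a-k}x_{1,-1}^k$ for $0\leq k<p^r$ (with $a=l$ or $a=l-1$, and $x_{11}$ invertible), and rewriting in the original generators as in Proposition \ref{descriptionofinduced} yields the claimed basis.

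For the socle, connectedness of $G_rT$ implies that $L_{r,-}(l)$ is the unique simple subsupermodule of $H^0_{r,-}(l)$. Since $x_{11}^l$ is annihilated by $Dist(B^+_rT)^+$, one has $L_{r,-}(l)=Dist(U^-_r)\cdot x_{11}^l$, and $Dist(U^-_r)$ has a basis consisting of $1$, $\mathbf{y}$, $\mathbf{f}^{(t)}$ and $\mathbf{f}^{(t)}\mathbf{y}$ for $1\leq t<p^r$. Applying these generators to $x_{11}^l$ using the formulas from Lemma \ref{soclewrtlower},
\[\mathbf{f}^{(t)}x_{11}^l=\binom{l}{t}x_{11}^{l-t}x_{1,-1}^t,\quad \mathbf{y}x_{11}^l=l\,x_{11}^{l-1}x_{1,\overline{0}},\quad \mathbf{f}^{(t)}\mathbf{y}x_{11}^l=l\binom{l-1}{t}x_{11}^{l-1-t}x_{1,-1}^t x_{1,\overline{0}},\]
and collecting the linearly independent nonzero images yields the claimed basis of $L_{r,-}(l)$. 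When $p\mid l$ the odd-parity contributions all vanish and only the vectors $x_{11}^{l-k}x_{1,-1}^k$ with $\binom{l}{k}\not\equiv 0\pmod p$ and $0\leq k<p^r$ survive; when $p\nmid l$ both families appear, giving the second description.

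The main obstacle will be the bookkeeping: one must verify that the decomposition $K[G_rT]\simeq A_r\otimes K[x_{1,\overline{0}},x_{-1,\overline{0}}]$ is compatible with the right $Dist(B^-_rT)$-action exactly as in the untruncated case, and that no new elements of $H^0_{r,-}(l)$ arise from the interplay between the analysis of Lemma \ref{nullspaceof y} and the truncation relations $x_{1,-1}^{p^r}=x_{-1,1}^{p^r}=0$. Once this compatibility is confirmed, the remainder reduces to restricting the computations of Section 4 to divided powers of order below $p^r$.
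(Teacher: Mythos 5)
Your argument is correct, but for the first half it takes a genuinely different route from the paper. For the basis of $H^0_{r,-}(l)$ the paper does not redo the Section~4 computation: it invokes Lemma 8.2 of \cite{zub1}, which says that for the superscheme factorization $\phi\colon G_rT\to B_r^-T\times U_r^+$ written out just before the lemma, the elements $\phi^*(y_{11}^l f)$, with $f$ running over a basis of $K[U_r^+]$, form a basis of the induced supermodule; since $\phi^*(u_{1,-1})=x_{11}^{-1}x_{1,-1}$ and $\phi^*(u_{1,\overline{0}})=x_{11}^{-1}x_{1,\overline{0}}$, the stated monomials $x_{11}^{l-k-\epsilon}x_{1,-1}^k x_{1,\overline{0}}^{\epsilon}$ drop out immediately. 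You instead rerun the analysis of Lemma \ref{nullspaceof y} and formula (\ref{sstar}) inside the truncated superalgebra $K[G_rT]\simeq A_r\otimes K[x_{1,\overline{0}},x_{-1,\overline{0}}]$; this is self-contained (no appeal to \cite{zub1}) at the cost of re-verifying the Section~4 computations under truncation, and it does go through: the truncation ideal is generated by $x'^{p^r}_{1,-1}$ and $x_{-1,1}^{p^r}$, so the tensor decomposition of $K[G]$ descends, and your "main obstacle" is in fact automatic, since $K[G_rT]$ is a quotient Hopf superalgebra of $K[G]$ and the right $Dist(B_r^-T)$-action is defined intrinsically from its own comultiplication; the only point to state carefully is that Lemma \ref{induced} applies to $B_r^-T$ (the hyperalgebra condition is still equivalent to the comodule condition here, since $\bigcap_k\mathfrak{m}^k=0$ in $K[B_r^-T]$), so "connectedness" needs this small justification rather than the one used for $G$. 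For the socle both proofs are the same: identify $L_{r,-}(l)$ with $Dist(U_r^-)x_{11}^l$ and apply the formulas from Lemma \ref{soclewrtlower} with the divided powers restricted to $t<p^r$, exactly as you do.
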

\begin{proof}
By Lemma 8.2 of \cite{zub1} the elements $\phi^*(y_{11}^l f)$, when $f$ runs over a basis of $K[U_r^+]$, form a basis of $H^0_{r, -}(l)$. The proof of remaining statements 
follows from formulas in the proof of Lemma \ref{soclewrtlower}.
\end{proof}
The proof of the next lemma is similar to the proof of Lemma \ref{Frobsimple-}.
\begin{lm}\label{Frobsimple+}
The induced supermodule $H^0_{r, +}(-l)=ind^{G_r T}_{B_r^+ T} K_{-l}$ has a basis
\[x_{-1, -1}^{l-k-\epsilon}x_{-1, 1}^k x_{-1, \overline{0}}^{\epsilon} \mbox{ for } 0\leq k < p^r \mbox{ and } \epsilon=0, 1.\]
If $p|l$, then the socle $L_{r, +}(-l)$ of $H^0_{r, +}(-l)$ has a basis
\[x_{-1, -1}^{l-k}x_{-1, 1}^k \mbox{ for } 0\leq k < p^r \mbox{ such that } p\not |\binom{l}{k};\]
if $p\not |l$, then $L_{r, +}(-l)$ has a basis
\[x_{-1, -1}^{l-k}x_{-1, 1}^k \mbox{ for } 0\leq k< p^r  \mbox{ such that } p\not |\binom{l}{k}, \mbox{ and }\]
\[x_{-1, -1}^{l-1-t}x_{-1, 1}^t x_{-1, \overline{0}} \mbox{ for } 0\leq t< p^r \mbox{ such that }  p\not | \binom{l-1}{t}.\]\end{lm}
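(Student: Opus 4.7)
The plan is to mirror the proof of Lemma \ref{Frobsimple-}, carrying out the symmetric construction with the roles of $B^-$ and $B^+$ (and correspondingly of $U^+$ and $U^-$) interchanged.

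First, I would establish the analogue of the superscheme isomorphism $\phi$. Namely, by the same determinantal reasoning (using $g_{-1,1}^{p^r}=g_{1,-1}^{p^r}=0$ one concludes that $g_{-1,-1}$ is invertible in the Frobenius thickening), every $g\in G_rT$ admits a unique factorization
\[
g = b^+ \cdot u^-, \qquad b^+\in B_r^+T,\ u^-\in U_r^-,
\]
obtained by writing the $(-1,-1)$-entry as a unit and peeling off the lower unitriangular part. This defines an isomorphism of superschemes $\phi' : G_rT \to B_r^+T\times U_r^-$ entirely analogous to $\phi$. In particular, the dual morphism $\phi'{}^*:K[B_r^+T]\otimes K[U_r^-]\to K[G_rT]$ is described by sending the generators of $K[B_r^+T]$ to $x_{-1,-1}^{\pm 1}$, $x_{1,-1}$ and the $\overline{0}$-entries of the upper part, while the generators of $K[U_r^-]$ go to $x_{-1,-1}^{-1}x_{-1,1}$ and $x_{-1,-1}^{-1}x_{-1,\overline{0}}$. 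The relations $x_{-1,1}^{p^r}=0$ and (the corresponding $\overline{0}$-relation) cut out $K[U_r^-]$ as a free supercommutative algebra modulo a $p^r$-th power relation, exactly as in the $U_r^+$ case.

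Next, I would invoke Lemma 8.2 of \cite{zub1} exactly as in Lemma \ref{Frobsimple-}. That result gives a basis of $H^0_{r,+}(-l)=\mathrm{ind}^{G_rT}_{B_r^+T} K_{-l}$ by applying $\phi'{}^*$ to $y_{-1,-1}^l\otimes f$ as $f$ ranges over a basis of $K[U_r^-]$. Substituting the explicit formulas for $\phi'{}^*$ and clearing denominators by the character constraint (weight $-l$) yields exactly the claimed basis
\[
x_{-1,-1}^{l-k-\epsilon} x_{-1,1}^k x_{-1,\overline{0}}^{\epsilon},\qquad 0\le k<p^r,\ \epsilon=0,1.
\]

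For the socle statement, I would proceed as in Lemma \ref{soclewrtupper}: since $G_rT$ is connected and its distribution algebra is generated by $Dist(T)$ together with $Dist(U_r^-)$ and $Dist(U_r^+)$, the socle $L_{r,+}(-l)$ equals $Dist(U_r^-)\cdot x_{-1,-1}^l$. The superalgebra $Dist(U_r^-)$ has a basis $\{\mathbf{x},\ \mathbf{e}^{(t)},\ \mathbf{e}^{(t)}\mathbf{x}\mid 1\le t<p^r\}$ (truncated to the Frobenius kernel), so by the analogues of the computations in Lemma \ref{soclewrtupper}, namely
\[
\mathbf{x}\, x_{-1,-1}^l = l\,x_{-1,-1}^{l-1}x_{-1,\overline{0}},\quad
\mathbf{e}^{(t)} x_{-1,-1}^l = \binom{l}{t} x_{-1,-1}^{l-t}x_{-1,1}^t,
\]
\[
\mathbf{e}^{(t)}\mathbf{x}\, x_{-1,-1}^l = l\binom{l-1}{t} x_{-1,-1}^{l-1-t}x_{-1,1}^t x_{-1,\overline{0}},
\]
the indicated basis elements span $L_{r,+}(-l)$, split into the two cases $p\mid l$ and $p\nmid l$. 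The main bookkeeping obstacle is to check that the isomorphism $\phi'$ really carries $B_r^+T$ to the factor on which $K_{-l}$ lives with the right weight convention (note the sign change in the character $-l$), but this is routine from the explicit form of $B^+$ written in Section 4.2, so I would treat it as a symmetric counterpart of Lemma \ref{Frobsimple-} and refer the reader to that proof.
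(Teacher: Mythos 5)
Your proposal is correct and follows essentially the same route the paper intends: the paper's proof of this lemma is simply the remark that it is analogous to Lemma \ref{Frobsimple-}, i.e.\ one uses the symmetric superscheme factorization $G_rT\simeq B_r^+T\times U_r^-$, Lemma 8.2 of \cite{zub1} for the basis, and the analogues of the formulas behind Lemma \ref{soclewrtupper} for the socle, exactly as you do. The only slip is notational: the operators $\mathbf{x}$, $\mathbf{e}^{(t)}$, $\mathbf{e}^{(t)}\mathbf{x}$ you use span $Dist(U_r^+)$, not $Dist(U_r^-)$ (the socle is $Dist(U_r^+)\,x_{-1,-1}^l$, since $Dist(U^-)$ annihilates $x_{-1,-1}^l$), a mislabeling which incidentally also occurs in the paper's text preceding Lemma \ref{soclewrtupper} and which does not affect your computations.
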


\begin{lm}
The superspace $Hom_G(H_{r, +}(-k), H_{r, -}(l))\neq 0$ if and only if $l=2p^r-k-1$.
\end{lm}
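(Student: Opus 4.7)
The plan is to apply Frobenius reciprocity. Since $H^0_{r,-}(l) = \mathrm{ind}^{G_rT}_{B^-_rT} K_l$, we have
\[
\mathrm{Hom}_{G_rT}(H^0_{r,+}(-k), H^0_{r,-}(l)) \cong \mathrm{Hom}_{B^-_rT}(H^0_{r,+}(-k), K_l),
\]
and the right-hand side is the weight-$l$ component of $H^0_{r,+}(-k)/M$, where $M := \mathrm{Dist}(U^-_r)^+ \cdot H^0_{r,+}(-k)$ is the submodule generated by the augmentation ideal of $\mathrm{Dist}(U^-_r)$. I would identify this quotient explicitly by computing $M$ using the basis in Lemma~\ref{Frobsimple+}.

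Set $V_j := x_{-1,-1}^{k-j} x_{-1,1}^j$ and $W_j := x_{-1,-1}^{k-1-j} x_{-1,1}^j x_{-1,\overline{0}}$ for $0 \leq j < p^r$. These make sense for any $k$ because $x_{-1,-1}$ is invertible in $K[G_rT]$ (from Lemma~\ref{K[G_r T]} together with the nilpotency of $x_{1,-1} x_{-1,1} - x_{1,\overline{0}} x_{-1,\overline{0}}$ in $K[G_rT]$). A direct computation from Lemma~\ref{leftaction} and the super-Leibniz rule shows that the generators ${\bf f}^{(t)}$ (for $1 \leq t < p^r$) and ${\bf y}$ of $\mathrm{Dist}(U^-_r)^+$ act by
\[
{\bf f}^{(t)} V_j = \binom{j}{t} V_{j-t}, \quad {\bf f}^{(t)} W_j = \binom{j}{t} W_{j-t}, \quad {\bf y} V_j = j\, W_{j-1}, \quad {\bf y} W_j = V_j.
\]
From ${\bf y} W_j = V_j$ the vectors $V_0, \ldots, V_{p^r-1}$ all lie in $M$. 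From ${\bf f}^{(p^r-1-m)} W_{p^r-1} = \binom{p^r-1}{p^r-1-m}\, W_m$, combined with the fact that $\binom{p^r-1}{s} \not\equiv 0 \pmod p$ for every $0 \leq s \leq p^r-1$ (by Kummer's theorem, since every base-$p$ digit of $p^r-1$ equals $p-1$), the vectors $W_0, \ldots, W_{p^r-2}$ also lie in $M$.

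Conversely, $W_{p^r-1} \notin M$: every application of ${\bf f}^{(t)}$ strictly decreases the $j$-index, while ${\bf y}$ either decreases $j$ by one or preserves $j$ while flipping the type $V \leftrightarrow W$, so no iterated action on a basis vector of index at most $p^r-1$ can yield a $W$-type vector of index $p^r-1$. Hence $H^0_{r,+}(-k)/M$ is one-dimensional, spanned by the image of $W_{p^r-1} = x_{-1,-1}^{k-p^r} x_{-1,1}^{p^r-1} x_{-1,\overline{0}}$, whose $T$-weight equals $-(k-p^r) + (p^r-1) + 0 = 2p^r - k - 1$. It follows that the hom space is nonzero precisely when $l = 2p^r - k - 1$. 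The main technical hurdle is the Kummer-theoretic nonvanishing of $\binom{p^r-1}{s}$ modulo $p$, which is what forces all $W_m$ with $m < p^r-1$ into the radical.
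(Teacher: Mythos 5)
Your proof is correct and takes essentially the same route as the paper: Frobenius reciprocity reduces the question to computing the coinvariants $H^0_{r,+}(-k)/\mathrm{Dist}(U^-_r)^+H^0_{r,+}(-k)$, and the same explicit action formulas for ${\bf f}^{(t)}$ and ${\bf y}$ show this quotient is one-dimensional, spanned by $x_{-1,-1}^{k-p^r}x_{-1,1}^{p^r-1}x_{-1,\overline{0}}$ of weight $2p^r-k-1$. The only differences are cosmetic: you hit each $W_m$ directly from $W_{p^r-1}$ via $\binom{p^r-1}{m}\not\equiv 0 \pmod p$ where the paper instead produces a suitable divided power using Lemma~\ref{Fero's responsibility}, and you spell out the check that $W_{p^r-1}$ itself lies outside the radical, which the paper leaves implicit.
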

\begin{proof}
As in the proof of Lemma \ref{nontrivialmorphism} we use the following formulas
\[{\bf f}^{(t)}x_{-1, -1}^{k-\epsilon-s-t}x_{-1, 1}^{s+t} x_{-1, \overline{0}}^{\epsilon} =
\left(\begin{array}{c} s+t \\ t\end{array}\right)x_{-1, -1}^{k-\epsilon-s}x_{-1, 1}^{s} x_{-1, \overline{0}}^{\epsilon} \mbox{ and }\]
\[{\bf y}x_{-1, -1}^{k-\epsilon-k}x_{-1, 1}^k x_{-1, \overline{0}}^{\epsilon} =k x_{-1, -1}^{k-\epsilon-k}x_{-1, 1}^{k-1} x_{-1, \overline{0}}^{\epsilon+1} +\epsilon x_{-1, -1}^{k-\epsilon-k+1}x_{-1, 1}^k\]
for $0\leq s< p^r$ and $0< t< p^r-s$. 
The second formula shows that every $x_{-1, -1}^{k-k}x_{-1, 1}^k$ belongs to $rad_{B_r^- T} H_{r, +}(-k)=Dist(U^-_r)H_{r, +}(-k)$. 

We will show that for $0\leq s < p^r-1$ there is $0< t<p^r -s$ such that $p\not|\binom{t+s}{t}$. If this is not the case, 
Lemma \ref{Fero's responsibility} applied to $k=p^r$ and $j=p^r-s-1$ implies $0\leq p^r-s-1<p^{a(s+1)}$. Since $p^r-s-1$ is divisible by $p^{a(s+1)}$,
 this is only possible if $p^r-s-1=0$, which is a contradiction.

Therefore every $x_{-1, -1}^{k-1-s}x_{-1, 1}^s x_{-1, \overline{0}}$, where $0\leq s < p^r-1$, also belongs to $rad_{B_r^- T} H_{r, +}(-k)$.
Thus $H_{r, +}(-k)/rad_{B_r^- T} H_{r, +}(-k)$ is spanned by the single element $x_{-1, -1}^{k-p^r}x_{-1, 1}^{p^r-1} x_{-1, \overline{0}}$ which means 
that $Hom_G(H_{r, +}(-k), H_{r, -}(l))\neq 0$ if and only if $l=2p^r-1-k$.
\end{proof}
Let $H$ be an algebraic supergroup and $R$ be its subsupergroup. Let $W=Kw$ be a one-dimensional $R$-supermodule and $V$ be a $H$-supermodule. 
For $\psi\in Hom_R(V, W)$ fix a (homogeneous) basis of $V$, say $v_1, \ldots , v_k$ such that $\psi(v_1)=w$ and $\psi(v_i)=0$ for $2\leq i\leq k$. 
Denote by $\widetilde{\psi} : V\to ind^H_R W$ the unique extention of $\psi$ given by the Frobenius reciprocity.
\begin{lm}\label{inducedmorphisms}
If we identify $ind^H_R W$ with a supersubmodule of $K[H]$ as in Lemma \ref{induced}, then $\widetilde{\psi}(v)=f_1$, where $\tau_V : v\mapsto\sum_{1\leq i\leq k}v_i\otimes f_i$ is a supercomodule map of $V$.
\end{lm}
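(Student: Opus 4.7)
The plan is to verify that the formula $v \mapsto f_1$ defines an $H$-supermodule homomorphism $F : V \to ind^H_R W$ whose composition with the Frobenius counit equals $\psi$, and then to invoke uniqueness in the Frobenius reciprocity.

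First I would rewrite $F$ in coordinate-free form as $F = (\psi \otimes \mathrm{id}) \circ \tau_V$, interpreting the target $W \otimes K[H]$ as $K[H]$ via the basis vector $w$. Since $\psi(v_1) = w$ and $\psi(v_i) = 0$ for $i \geq 2$, the expression $\sum_i \psi(v_i) \otimes f_i$ collapses to $w \otimes f_1$, recovering the stated formula and showing in passing that the assignment does not depend on how $v_1$ is completed to a basis.

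Second, I would check that $F$ is $H$-equivariant and that $F(V) \subseteq ind^H_R W$. The $H$-equivariance $\Delta \circ F = (F \otimes \mathrm{id}) \circ \tau_V$ follows from coassociativity $(\tau_V \otimes \mathrm{id}) \tau_V = (\mathrm{id} \otimes \Delta) \tau_V$ by applying $\psi \otimes \mathrm{id} \otimes \mathrm{id}$. To show $F(V) \subseteq ind^H_R W$, I would apply the characterization from Lemma \ref{induced}: $f \in ind^H_R W$ iff $\chi \otimes f = (\overline{\phantom{x}} \otimes \mathrm{id}) \Delta(f)$. Writing the right-hand side as $(\overline{F(\cdot)} \otimes \mathrm{id}) \tau_V$ via $H$-equivariance, the needed identity reduces to the computation $\overline{F(v)} = \chi \cdot c_1(v)$, where $c_1(v) \in K$ is the coefficient of $v_1$; this in turn follows from the $R$-equivariance of $\psi$, which identifies $(\mathrm{id} \otimes \overline{\phantom{x}}) \tau_V$ with the $R$-comodule structure on $V$ and hence $\overline{F} = \tau_W \circ \psi$.

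Third, the Frobenius evaluation $ind^H_R W \to W$, $f \mapsto \epsilon_{K[H]}(f) w$, composed with $F$ returns $\psi$: the counit axiom $(\mathrm{id} \otimes \epsilon_{K[H]}) \tau_V = \mathrm{id}_V$ applied after $\psi$ gives $\epsilon_{K[H]}(F(v)) w = \psi(v)$. Uniqueness in the Frobenius reciprocity isomorphism $Hom_H(V, ind^H_R W) \cong Hom_R(V, W)$ then forces $F = \widetilde\psi$. The main technical nuisance I anticipate is super-sign bookkeeping when sliding $\psi$ or the restriction $\overline{\phantom{x}}$ past tensor factors; however, because $W$ is one-dimensional and $\psi$, $\chi$, $w$ each carry a definite parity, all Koszul signs are absorbed into the identification $W \otimes K[H] \cong K[H]$ and do not obstruct the argument.
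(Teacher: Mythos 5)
Your proposal is correct and follows essentially the same route as the paper: the paper's entire proof is the observation that the Frobenius reciprocity isomorphism $Hom_R(V,W)\to Hom_H(V, ind^H_R W)$ is given by $\psi\mapsto(\psi\otimes id_{K[H]})\tau_V$, which is exactly your map $F$, so that $\widetilde{\psi}(v)=f_1$ follows at once from $\psi(v_1)=w$, $\psi(v_i)=0$ for $i\geq 2$. The only difference is that you verify this standard description of the adjunction (equivariance, landing in $ind^H_R W$, and recovery of $\psi$ via the counit) rather than quoting it, which is harmless.
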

\begin{proof}
The isomorphism $Hom_R(V, W)\to Hom(V, ind^H_R W)$ is given by $\psi\mapsto (\psi\otimes id_{K[H]})\tau_V$ for $\psi\in Hom_R(V, W)$.
\end{proof}

Assume that $k=tp^r+s$, where $t\leq 0$ and $0\leq s<p^r$ and define $\tilde{k}=k-(t-1)p^r$. 

\begin{lm}\label{morphr}
Let $\psi_{r, k}: H_{r, +}(-k)\to H^0_{r, -}(2p^r-k-1))$ be a nonzero morphism such that
$\psi_{r, k}(x_{-1,-1}^{k-p^r}x_{-1,1}^{p^r-1}x_{-1,\overline{0}})=x_{1,1}^{2p^r-k-1}$.
 
If $p^r\leq k < 2p^r$, then 
\[\psi_{r, k}(x_{-1,-1}^{k-i}x_{-1,1}^i)=(k-i)\binom{k-i-1}{k-p^r} x_{1,1}^{i-k+p^r-1} x_{1,-1}^{p^r-i-1} x_{1,\overline{0}},\]
and
\[\psi_{r, k}(x_{-1,-1}^{k-i-1}x_{-1,1}^i x_{-1,\overline{0}})=\binom{k-i-1}{k-p^r} x_{1,1}^{i-k+p^r} x_{1,-1}^{p^r-i-1}\]
for every $0\leq i <p^r$.

If $k\not\in [p^r, 2p^r-1]$, then
\[\psi_{r, k}(x_{-1,-1}^{k-i}x_{-1,1}^i)=(\tilde{k}-i)\binom{\tilde{k}-i-1}{\tilde{k}-p^r} x_{1,1}^{i-k+p^r-1} x_{1,-1}^{p^r-i-1} x_{1,\overline{0}}\]
and
\[\psi_{r, k}(x_{-1,-1}^{k-i-1}x_{-1,1}^i x_{-1,\overline{0}})=\binom{\tilde{k}-i-1}{\tilde{k}-p^r} x_{1,1}^{i-k+p^r} x_{1,-1}^{p^r-i-1}\]
for every $0\leq i <p^r$.
\end{lm}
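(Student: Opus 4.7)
The plan is to mimic the proof of Lemma~\ref{towardlinkage+} in the $G_rT$ setting, using that $x_{1,1}$ and $x_{-1,-1}$ are units in $K[G_rT]$ (Lemma~\ref{K[G_r T]}), so that negative exponents pose no problem. First, comparing weights (recall $x_{-1,-1}$, $x_{-1,1}$, $x_{-1,\overline{0}}$ have weights $-1, +1, 0$ and $x_{1,1}$, $x_{1,-1}$, $x_{1,\overline{0}}$ have weights $+1, -1, 0$) forces
\begin{align*}
\psi_{r,k}(x_{-1,-1}^{k-i}x_{-1,1}^{i}) &= C_i\, x_{1,1}^{p^r-k+i-1}x_{1,-1}^{p^r-i-1}x_{1,\overline{0}},\\
\psi_{r,k}(x_{-1,-1}^{k-i-1}x_{-1,1}^{i}x_{-1,\overline{0}}) &= D_i\, x_{1,1}^{p^r-k+i}x_{1,-1}^{p^r-i-1}
\end{align*}
for unique scalars $C_i, D_i$. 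The given normalization reads $D_{p^r-1}=1$, and $\psi_{r,k}$ is an \emph{odd} morphism since the normalized source vector is odd while its image is even.

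For $D_i$ I would apply the even divided power ${\bf f}^{(p^r-1-i)}$ to the normalized vector. Using ${\bf f}x_{-1,1}=x_{-1,-1}$ together with ${\bf f}x_{-1,-1}=0$ and ${\bf f}x_{-1,\overline{0}}=0$, one computes ${\bf f}^{(p^r-1-i)}(x_{-1,-1}^{k-p^r}x_{-1,1}^{p^r-1}x_{-1,\overline{0}}) = \binom{p^r-1}{p^r-1-i}x_{-1,-1}^{k-i-1}x_{-1,1}^{i}x_{-1,\overline{0}}$ and, on the target side, ${\bf f}^{(p^r-1-i)}x_{1,1}^{2p^r-k-1} = \binom{2p^r-k-1}{p^r-1-i}x_{1,1}^{p^r-k+i}x_{1,-1}^{p^r-1-i}$. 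Equating the two expressions after applying $\psi_{r,k}$ gives
\[\binom{p^r-1}{p^r-1-i}\,D_i = \binom{2p^r-k-1}{p^r-1-i}.\]
For $C_i$ I would apply the odd derivation ${\bf y}$ to $x_{-1,-1}^{k-i}x_{-1,1}^{i}$, using ${\bf y}x_{-1,-1}=0$ and ${\bf y}x_{-1,1}=x_{-1,\overline{0}}$ on the source, and ${\bf y}(x_{1,1}^{a}x_{1,-1}^{b}x_{1,\overline{0}}) = x_{1,1}^{a}x_{1,-1}^{b+1}$ on the target (the other candidate term contains $x_{1,\overline{0}}^2=0$). The oddness of $\psi_{r,k}$ yields $\psi_{r,k}({\bf y}v)=-{\bf y}\psi_{r,k}(v)$, so $iD_{i-1} = -C_i$, hence $C_i = -iD_{i-1}$.

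The last step, where I expect the main technical work, is to convert these answers into the stated form. Two congruences modulo $p$ are crucial: $\binom{p^r-1}{n}\equiv(-1)^n\pmod p$ for $0\le n<p^r$ (from Lucas together with $\binom{p-1}{j}\equiv(-1)^j$), so the denominator above is a unit; and $\binom{a}{n}\equiv\binom{a'}{n}\pmod p$ whenever $a\equiv a'\pmod{p^r}$ and $n<p^r$ (Lucas applied digit by digit). Combined with the negative-binomial identity $(-1)^n\binom{m}{n}=\binom{n-m-1}{n}$, one rewrites
\[D_i \equiv (-1)^{p^r-1-i}\binom{2p^r-k-1}{p^r-1-i} = \binom{k-p^r-i-1}{p^r-1-i}\equiv\binom{\tilde{k}-i-1}{p^r-1-i}=\binom{\tilde{k}-i-1}{\tilde{k}-p^r}\pmod p,\]
using $k-p^r-i-1\equiv\tilde{k}-i-1\pmod{p^r}$ and the symmetry of the binomial; for $p^r\le k<2p^r$ one has $\tilde{k}=k$ and this specializes to $\binom{k-i-1}{k-p^r}$. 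Finally, $C_i=-iD_{i-1}$ together with $-i\equiv p^r-i\pmod p$ and the algebraic identity $(p^r-i)\binom{\tilde{k}-i}{\tilde{k}-p^r}=(\tilde{k}-i)\binom{\tilde{k}-i-1}{\tilde{k}-p^r}$ produces the stated formula for $C_i$. The principal obstacle is this bookkeeping around mod-$p^r$ binomial identities in the second case, which is what makes the replacement $k\leadsto\tilde{k}$ legitimate; once it is in place, the two cases are handled by one and the same computation.
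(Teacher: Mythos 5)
Your route is genuinely different from the paper's. The paper does not redo the divided-power computation inside $K[G_rT]$: for $p^r\le k<2p^r$ it identifies $\psi_{r,k}$ with the image of the $G$-morphism $\psi_{-k,k-1-2j}$, $j=k-p^r$, under the superalgebra epimorphism $K[G]\to K[G_rT]$ (via the comultiplication description of induced morphisms in Lemma~\ref{inducedmorphisms}), so the first case is just Lemma~\ref{towardlinkage+} read modulo that epimorphism; the general case is then obtained by twisting with the group-like element $x_{-1,-1}^{p^r(t-1)}$, which produces $\tilde k$. You instead exploit the invertibility of $x_{1,1}$, $x_{-1,-1}$ in $K[G_rT]$ and compute with generalized binomial coefficients, handling all $k$ uniformly; the price is the mod-$p$/mod-$p^r$ binomial bookkeeping (upper negation, $\binom{a+p^r}{n}\equiv\binom{a}{n}$ for $n<p^r$ — for negative upper index this follows from Vandermonde and $\binom{p^r}{j}\equiv 0$ for $0<j<p^r$ rather than from Lucas literally, but the facts you use are correct). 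Your computations of $D_i$ and of the reduction to $\binom{\tilde k-i-1}{\tilde k-p^r}$ check out, as does the relation $C_i=-iD_{i-1}$ for $i\ge 1$ and its conversion to $(\tilde k-i)\binom{\tilde k-i-1}{\tilde k-p^r}$.

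There is, however, one degenerate case your argument does not cover: $i=0$ in the formula for $C_i$. Applying ${\bf y}$ to $x_{-1,-1}^{k}$ gives $0$ on the source side, while on the target side $-{\bf y}\bigl(C_0\,x_{1,1}^{p^r-k-1}x_{1,-1}^{p^r-1}x_{1,\overline{0}}\bigr)=-C_0\,x_{1,1}^{p^r-k-1}x_{1,-1}^{p^r}$ vanishes identically in $K[G_rT]$ because $x_{1,-1}^{p^r}=0$; so the equation is $0=0$ and pins down nothing, and "equating coefficients" against a zero monomial is not licit. Unlike in the $G$-setting, the relevant weight space of $H^0_{r,-}(2p^r-k-1)$ at weight $-k$ is nonzero, so $C_0$ is a genuine unknown; the asserted value is $\tilde k\binom{\tilde k-1}{\tilde k-p^r}=p^r\binom{\tilde k}{p^r}\equiv 0$, i.e.\ the claim is $\psi_{r,k}(x_{-1,-1}^{k})=0$, which still has to be proved. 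The cleanest repair is to apply ${\bf y}$ to the odd basis vectors instead: since ${\bf y}\bigl(x_{-1,-1}^{k-i-1}x_{-1,1}^{i}x_{-1,\overline{0}}\bigr)=x_{-1,-1}^{k-i}x_{-1,1}^{i}$ (the other term carries $x_{-1,\overline{0}}^{2}=0$), the oddness of $\psi_{r,k}$ gives $C_i=-(p^r-k+i)D_i\equiv(\tilde k-i)D_i\pmod p$ for \emph{every} $0\le i<p^r$, which both settles $i=0$ and lets you dispense with the extra identity $(p^r-i)\binom{\tilde k-i}{\tilde k-p^r}=(\tilde k-i)\binom{\tilde k-i-1}{\tilde k-p^r}$.
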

\begin{proof}
According to Lemma \ref{inducedmorphisms}, the morphism $\psi_{-k, k-1-2j}$ is defined as follows. 
If $v\in H^0_+(-k)$ is a basis vector and the summand $x_{-1,-1}^j x_{-1,1}^{k-j-1}x_{-1,\overline{0}}\otimes x$ appears in the expression for $\Delta(v)$, then 
$\psi_{-k, k-1-2j}(v)=x$.

Assume now that $p^r\leq k < 2p^r$ and set $j=k-p^r$. Then for every basis vector $v$ of $H^0_{r, +}(-k)$, $\psi_{r, k}(v)$ can be defined as the image of $\psi_{-k, k-1-2j}(v)$ with respect to the superalgebra epimorphism $K[G]\to K[G_r T]$. Thus the first statement follows by Lemma \ref{towardlinkage+}.

The second statement can be proven by the following observation. Since the element $x_{-1,-1}^{p^r}$ is group-like, Lemma \ref{inducedmorphisms} implies that $\psi_{r, k}(v x_{-1,-1}^{p^r (t-1)})=\psi_{r, \tilde{k}}(v) x_{1,1}^{-p^r(t-1)}$ for every $v\in H^0_{r, +}(\tilde{k})$.
\end{proof}

The following lemma is an analogue of Lemma \ref{osptosl2}.
Denote by $G_{ev,r}$ the $r$-th Frobenius kernel of $G_{ev}$, by $H^0_{ev,r,-}(k)$ and $L_{ev,r,-}(k)$ the $G_{ev,r}T$-induced and $G_{ev,r}T$-simple supermodule, respectively, of the highest weight $k$. 

\begin{lm}\label{osptosl2r}
The induced $G_rT$-supermodule $H^0_{r, -}(k)$, considered as an $G_{ev,r}T$-supermodule, is isomorphic to $H^0_{ev,r,-}(k)$ if $p|k$ and to $H^0_{ev,r,-}(k)\oplus H^0_{ev,r,-}(k-1)$ if $p\not|k$.
The simple $G_rT$-supermodule $L_{r,-}(k)$, considered as an $G_{ev,r}T$- supermodule, is isomorphic to $L_{ev,r,-}(k)$ if $p|k$ and to $L_{ev,r,-}(k)\oplus L_{ev,r,-}(k-1)$ if 
$p\not|k$.
\end{lm}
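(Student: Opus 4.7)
My plan is to follow the proof of Lemma \ref{osptosl2} verbatim, replacing $H^0_{-}, L_{-}, H^0_{SL(2)}, L_{SL(2)}$ by their Frobenius-thickened analogues $H^0_{r,-}, L_{r,-}, H^0_{ev,r,-}, L_{ev,r,-}$. The argument rests on matching $T$-characters of the supermodules on the two sides, combined with the fact that the weights $k$ and $k-1$ lie in distinct linkage classes for the Frobenius thickening $G_{ev,r}T\cong SL(2)_rT$.

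First I would compute characters directly from the bases of Lemma \ref{Frobsimple-}. Since the basis element $x_{11}^{k-s-\epsilon}x_{1,-1}^{s}x_{1,\overline{0}}^{\epsilon}$ carries $T$-weight $k-2s-\epsilon$, one has
\[\mathrm{ch}\,H^0_{r,-}(k)=\sum_{s=0}^{p^r-1}\bigl(x^{k-2s}+x^{k-1-2s}\bigr),\]
which agrees with $\mathrm{ch}\,H^0_{ev,r,-}(k)+\mathrm{ch}\,H^0_{ev,r,-}(k-1)$ computed from the analogue of Lemma \ref{Frobsimple-} applied to $SL(2)_rT$. The same splitting of the basis, together with the $p\mid k$ versus $p\nmid k$ distinction built into Lemma \ref{Frobsimple-} for the socle, yields the character of $L_{r,-}(k)$ matching $L_{ev,r,-}(k)$ alone when $p\mid k$ and $L_{ev,r,-}(k)\oplus L_{ev,r,-}(k-1)$ otherwise.

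Next I would realize the claimed decomposition concretely. The inclusion $G_{ev,r}T\hookrightarrow G_rT$ sending $A\mapsto\mathrm{diag}(A,1)$ has the property that right translation by $G_{ev,r}T$ fixes the odd generators $x_{\pm 1,\overline{0}}$ and acts on $\{x_{ij}:i,j\in\{1,-1\}\}$ via the standard right-regular action of $SL(2)_rT$. Hence the subspaces
\[V_e=\mathrm{span}\{x_{11}^{k-s}x_{1,-1}^{s}:0\le s<p^r\},\qquad V_o=\mathrm{span}\{x_{11}^{k-1-s}x_{1,-1}^{s}x_{1,\overline{0}}:0\le s<p^r\}\]
are $G_{ev,r}T$-stable; restriction of functions from $G_rT$ to $G_{ev,r}T$ identifies $V_e$ with $H^0_{ev,r,-}(k)$ and, after dividing off the $G_{ev,r}T$-invariant factor $x_{1,\overline{0}}$, identifies $V_o$ with $H^0_{ev,r,-}(k-1)$ up to a parity shift. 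Intersecting with the socle and invoking Lemma \ref{Frobsimple-} gives the parallel identification for the simple supermodules.

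The main obstacle I anticipate is upgrading the character equality to a genuine $G_{ev,r}T$-supermodule isomorphism: a priori nothing prevents a nonsplit extension linking the two pieces. This is resolved by linkage: the $SL(2)_rT$-linkage relation $\lambda\equiv\mu$ or $\lambda\equiv-\mu-2\pmod{2p^r}$ can never connect an element of $\{k-2s\}$ to one of $\{k-1-2s\}$ because the difference of two such weights is always odd. Consequently the explicit submodules $V_e$ and $V_o$ exhibited above must form a pair of direct summands, and the decomposition stated in the lemma follows.
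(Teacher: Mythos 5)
Your proposal is correct and follows essentially the same route as the paper: the paper's proof is exactly the character comparison of $H^0_{r,-}(k)$ (resp.\ $L_{r,-}(k)$) with $H^0_{ev,r,-}(k)$ and $H^0_{ev,r,-}(k-1)$ (resp.\ the simples), combined with the fact that $k$ and $k-1$ are not $G_{ev,r}T$-linked. Your explicit $G_{ev,r}T$-stable subspaces $V_e$ and $V_o$ are a welcome extra detail that makes the splitting concrete, but they do not constitute a different method.
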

\begin{proof}
It suffices to compare the characters of the supermodules $H^0_{r, -}(k)$, $H^0_{ev,r,-}(k)$ and $H^0_{ev,r,-}(k-1)$, and 
$L_{r,-}(k)$, $L_{ev,r,-}(k)$ and $L_{ev,r,-}(k-1)$, respectively, and to take into account that the weights $k$ and $k-1$ are not $G_{ev,r}T$-linked.
\end{proof}

\begin{rem}
Since $x_{1,1}^{p^r}$ and $x_{-1,-1}^{p^r}$ are group-like elements in $G_rT$, the induced and simple $G_rT$-supermodule of the highest weight $k$ can be expressed as:
\[H^0_{r, -}(k) \simeq H^0_{r, -}(k+tp^r) \otimes K_{-tp^r} \text{ and } L_{r,-}(k) \simeq L_{r,-}(k+tp^r) \otimes K_{-tp^r}.\]
Therefore it is enough to describe the structure of induced and simple $G_rT$-supermodules of nonnegative highest weight $k$, or even better for $p^r\leq k <2p^r$, and the general case is then obtained using a shift by an appropriate multiple of $p^r$. 
\end{rem}

\subsection{Simple composition factors of induced supermodules}

Recall that $W^r=\cup_{j=-1}^{r-1} W_j$. The following proposition is an analogue of Proposition \ref{simplesosp}. 

\begin{pr}\label{simplesospr}
If $l \not \equiv 0,-1 \pmod p$, then the simple composition factors of $H^0_{r, -}(l)$ are $L_{r, -}(l)$, all $L_{r, -}(\ell_{l-1}(w))$ such that $w\in W^r$ is of the first kind, and 
all $L_{r, -}(\ell_l(w))$ such that $w\in W^r$ is of the second kind. 

If $l \equiv -1 \pmod p$, then the simple composition factors of $H^0_{r, -}(l)$ are $L_{r, -}(l)$, all $L_{r, -}(\ell_{l-1}(w))$ such that $w\in W^r$ is of the first kind, and 
all $L_{r, -}(\ell_l(w))$ such that $w\in W^r$ is of the first kind. 

If $l\equiv 0 \pmod p$, then the simple composition factors of $H^0_{r, -}(l)$ are $L_{r, -}(l)$, $L_{r, -}(l-1)$, all $L_{r, -}(\ell_{l-1}(w))$ such that $w\in W^r$ is of the first kind, and 
all $L_{r, -}(\ell_l(w))$ such that $w\in W^r$ is of the second kind. 
\end{pr}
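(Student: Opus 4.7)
The plan is to adapt the proof of Proposition \ref{simplesosp} to the Frobenius-thickened setting, with three basic substitutions: $H^0_-(k)$ becomes $H^0_{r,-}(l)$, the $SL(2)$-induced module $H^0_{SL(2)}(k)$ becomes the $SL(2)_rT$-induced module $H^0_{ev,r,-}(l)$, and the unbounded word set $W$ is replaced by the truncation $W^r = \cup_{j=-1}^{r-1} W_j$. The bridge between the orthosymplectic and the purely even setting is again Lemma \ref{osptosl2r}.

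First I would establish an $SL(2)_rT$-analogue of Proposition \ref{simplesl2}: the simple composition factors of $H^0_{ev,r,-}(k)$ are exactly the $L_{ev,r,-}(\ell_k(w))$ for $w \in W^r$, subject to the same removals/identifications in the cases $a_i = 0$ or $a_i = p-1$. The inductive covering argument of Proposition \ref{simplesl2} carries over verbatim, since the formal character of $H^0_{ev,r,-}(k)$ is obtained from that of $H^0_{SL(2)}(k)$ by keeping only the weights $k-2i$ with $0 \leq i < p^r$, i.e.\ with nontrivial $p$-adic digits restricted to positions $0, 1, \ldots, r-1$. Kummer's theorem applies unchanged to determine which $\binom{k}{i}$ are nonzero in this truncated range, so each batch $v$ of the covering argument contributes words from $W_{v-1}$, and the procedure terminates after exhausting $W^r$.

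Next I would apply Lemma \ref{osptosl2r} to write $H^0_{r,-}(l)$ as an $SL(2)_rT$-supermodule: it equals $H^0_{ev,r,-}(l) \oplus H^0_{ev,r,-}(l-1)$ when $p \nmid l$ and $H^0_{ev,r,-}(l)$ when $p \mid l$. The same lemma tells us that $L_{ev,r,-}(m) \oplus L_{ev,r,-}(m-1)$ recombines into the single $G_rT$-simple $L_{r,-}(m)$ precisely when $p \nmid m$. Running through the three cases of the proof of Proposition \ref{simplesosp}, one pairs first-kind composition factors as $L_{ev,r,-}(\ell_l(w)) \oplus L_{ev,r,-}(\ell_{l-1}(w)) = L_{r,-}(\ell_{l-1}(w))$ and second-kind ones as $L_{ev,r,-}(\ell_l(w)) \oplus L_{ev,r,-}(\ell_l(w)-1) = L_{r,-}(\ell_l(w))$, with the same degenerations: when $l \equiv -1 \pmod p$ there are no second-kind contributions (and first-kind weights $\ell_l(w)$ give $G_rT$-simples on their own, since they are divisible by $p$), while when $l \equiv 0 \pmod p$ the $SL(2)_rT$-simples $L_{ev,r,-}(l-1) \oplus L_{ev,r,-}(l-2)$ combine into an extra factor $L_{r,-}(l-1)$ and the second-kind weights $\ell_l(w)$ are themselves divisible by $p$, so $L_{ev,r,-}(\ell_l(w)) = L_{r,-}(\ell_l(w))$. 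At every step the words are drawn from $W^r$.

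The main obstacle is the clean proof of the $SL(2)_rT$-analogue of Proposition \ref{simplesl2}, specifically the verification that the truncation to $W^r$ is exactly right: one must check that the inductive batches introducing words from $W_j$ in Proposition \ref{simplesl2}'s proof exhaust precisely the range $0 \leq i < p^r$ when $j$ runs from $-1$ to $r-1$, and that no composition factor indexed by a word of larger length survives the restriction to the $r$-th thickening. A secondary subtlety is the bijection between first-kind and second-kind words whose associated weights differ by $1$ (used for the recombination step); one must confirm that this bijection preserves $W^r$, so that truncation of the $SL(2)_rT$ description produces the correctly truncated $G_rT$ description stated in the proposition.
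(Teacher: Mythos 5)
Your plan (prove a $G_{ev,r}T$-analogue of Proposition \ref{simplesl2} by truncating the covering argument, then recombine even simples via Lemma \ref{osptosl2r} following the three cases of Proposition \ref{simplesosp}) could be made to work, but as written it has a genuine gap, and it is exactly the point you flag as the ``main obstacle'' without resolving it. The truncation claims underlying your first step are not true for arbitrary highest weights: the character of $H^0_{ev,r,-}(k)$ is obtained from that of $H^0_{SL(2)}(k)$ by discarding the weights $k-2i$ with $i\geq p^r$ only when $k\geq p^r-1$ (for $0\leq k<p^r-1$ the former has dimension $p^r$ while the truncated $SL(2)$-character has dimension $k+1$, and for $k<0$ one has $H^0_{SL(2)}(k)=0$ while $H^0_{ev,r,-}(k)\neq 0$); moreover, for $l\geq 2p^r$ the untruncated decomposition of $H^0_{SL(2)}(l)$ is indexed by words in $W^u$ with $u>r$, so the assertion that the inductive batches ``exhaust precisely $W^r$'' is precisely what needs proof, not something that carries over verbatim. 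Note also that Proposition \ref{simplesospr} is stated for arbitrary integral $l$ and drops the nonnegativity conditions $\ell_l(w)\geq 0$ of Proposition \ref{simplesosp}; an argument that treats $G_rT$-modules as truncations of $G$-modules and uses the $p$-adic expansion of $l+1$ cannot cover such $l$ directly.

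The missing ingredient is the normalization the paper performs first: since $x_{1,1}^{p^r}$ and $x_{-1,-1}^{p^r}$ are group-like, $H^0_{r,-}(l)\simeq H^0_{r,-}(\tilde l)\otimes K_{tp^r}$ and $L_{r,-}(q)\simeq L_{r,-}(\tilde q)\otimes K_{tp^r}$, and Kummer's theorem shows that the divisibility conditions on $\binom{l}{i}$ and $\binom{l-1}{i}$ for $0\leq i<p^r$ are unchanged under shifting $l$ by multiples of $p^r$; hence one may assume $p^r\leq l<2p^r$. In that range the word combinatorics involves only the digits in positions $0,\ldots,r$, so the relevant word set really is $W^r$, the weight spaces of $H^0_{r,-}(l)$ with weights in $[0,l]$ are one-dimensional, and the characters of $H^0_{r,-}(l)$ and $L_{r,-}(q)$ genuinely are truncations of those of $H^0_-(l)$ and $L_-(q)$. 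With this normalization in place the paper in fact takes a shortcut that bypasses both of your steps: it truncates, term by term, the already established decomposition of $\chi(H^0_-(l))$ from Proposition \ref{simplesosp}, using multiplicity one to conclude that the truncated simples are all the composition factors and that there are no others; no $SL(2)_rT$-analogue of Proposition \ref{simplesl2} and no recombination via Lemma \ref{osptosl2r} are needed. If you insert the same shift-by-$tp^r$ reduction at the start, your two-step route also closes (and your ``secondary subtlety'' disappears, since the first-kind/second-kind bijection only touches digits in positions $\leq r$ and so stays inside $W^r$); without it, the argument as written does not go through for general $l$.
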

\begin{proof}
The proof is a simple modification of the combinatorial arguments in the proof of Proposition \ref{simplesosp}.
Observe that all simple composition factors of $H^0_{r, -}(l)$ have a multiplicity of at most one.

First assume that $p^r\leq  l < 2p^r$. We need to write the character $\chi(H^0_{r,-}(l))$ of $H^0_{r,-}(l)$ as a sum of characters $\chi(L_{r,-}(l_j))$ of simple $G_rT$-supermodules.

The character of $L_{r, -}(q)$ for $q\geq 0$ is obtained by truncating of the character of $L_-(q)$ in such a way that if $\chi(L_-(q))=\sum_{j\in J} x^{q-j}$, then 
$\chi(L_{r, -}(q))=\sum_{j\in J;0\leq j<2p^r} x^{q-j}$. Analogously, the character of $H^0_{r, -}(l)$ is obtained by truncating of the character of $H^0_-(l)$.
Because of the range for $k$ we easily observe that the weight spaces of $H^0_{r, -}(l)$ corresponding to weights $0, \ldots, l$ are one-dimensional.

Write $\chi(H^0_-(l))=\sum_{j=0}^{2l} x^{l-j}$ as a sum of $\chi(L_-(l_j))$ for appropriate $L_-(l)$, $L_-(l-1)$, $L_-(\ell_{l-1}(w))$ and $L_-(\ell_{l}(w))$ for $w\in W$ as in Proposition \ref{simplesosp}.
These simple supermodules $L_-(l_j)$ are completely characterized by the property that every weight space $0, \ldots, l$ belongs to exactly one $L_-(l_j)$.
Therefore each such corresponding $L_{r, -}(l_j)$ is a composition factor of $H^0_{r, -}(l)$. 
Since all $L_-(l_j)$ completely cover all weight spaces corresponding to weights $-l, \ldots, l$, every non-zero weight space of $H^0_{r, -}(l)$ belongs to the range $-l, \ldots, l$ and the characters of simples $L_{r, -}(l_j)$ and $H^0_{r, -}(l)$ are obtained by truncating of the characters of the corresponding $L_-(l_j)$ and $H^0_-(l)$, there cannot be any additional simple composition factors of $H^0_{r, -}(l)$ besides those already described.   
  
If $l$ is an arbitrary integer, we can find a multiple of $p^r$, say $tp^r$ such that $p^r<\tilde{l}=l-tp^r\leq 2p^r$. 
In this case $H^0_{r, -}(l)\cong H^0_{r, -}(\tilde{l})\otimes K_{tp^r}$, 
$L_{r, -}(l)\cong L_{r, -}(\tilde{l})\otimes K_{tp^r}$ and $L_{r, -}(l-1)\cong L_{r, -}(\tilde{l}-1)\otimes K_{tp^r}$.
Since $l$ and $\tilde{l}$ differ by a multiple of $p^r$, using Kummer's theorem, we infer that $\binom{l}{i}$ is a multiple of $p$ if and only if $\binom{\tilde{l}}{i}$ 
is a multiple of $p$ 
and $\binom{l-1}{i}$ is a multiple of $p$ if and only if $\binom{\tilde{l}-1}{i}$ is a multiple of $p$ for every $0\leq i<p^r$.
This implies that $L_{r, -}(\ell_l(w))\cong L_{r, -}(\ell_{\tilde l}(w))\otimes K_{tp^r}$. Using the above isomorphisms we extend the statement of the Proposition to arbitrary 
$l$.
\end{proof}

\begin{pr}\label{blocksospr}
Let $G=SpO(2|1)$. Then there are exactly $p$ blocks $\mathscr{B}_a$, for $0\leq a<p$ for every Frobenius thickening $G_rT$. 
The block $\mathscr{B}_a$ is described as $\mathscr{B}_a=\{l\in \mathbb{Z}| l \equiv a \pmod{2p}$ or $l\equiv 2p-1-a \pmod{2p}\}$. 
\end{pr}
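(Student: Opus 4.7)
The plan is to mirror the proof strategy of Proposition \ref{blocksosp} for the finite group $G$, but using Proposition \ref{simplesospr} in place of Proposition \ref{simplesosp}. Specifically, the argument has two halves: first, verify that every simple composition factor of $H^0_{r,-}(l)$ lies in a single $\mathscr{B}_a$ determined by the residue of $l$ modulo $2p$; second, exhibit chains that link all weights within $\mathscr{B}_a$.

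For the first half, given an integer $l$, write the $p$-adic expansion $l+1 = \sum a_i p^i$ and let $a$ be such that $l \equiv a \pmod{2p}$ (so $a = a_0-1$ or $a = a_0-1+p$ modulo $2p$; in particular the pair $(a_0-1, 2p-1-a_0)$ mod $2p$ always falls inside the same $\mathscr{B}_a$). By Proposition \ref{simplesospr}, the weights of the composition factors are among $l$, $l-1$ (only if $p\mid l$), $\ell_{l-1}(w)$ for first-kind $w\in W^r$, and $\ell_l(w)$ for second-kind $w\in W^r$. Using the recipe $\ell_k(w)=k-\sum 2a_ip^i-\sum 2(a_i+1)p^i$, one checks directly that $\ell_l(w)\equiv l \pmod{2p}$ when $w_0 = "1"$ and $\ell_{l-1}(w)\equiv (l-1)-2a_0 \pmod{2p}$ when $w_0 = "3"$; in either case the resulting residue belongs to the same pair $\{a, 2p-1-a\}$ modulo $2p$. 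Hence every composition factor of $H^0_{r,-}(l)$ lies in $\mathscr{B}_a$.

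For the second half, I follow precisely the linkage scheme of Proposition \ref{blocksosp}: to see that $\mathscr{B}_a$ is a single block, it suffices to link consecutive pairs of weights in each arithmetic progression. For $t \in \mathbb{Z}_{\geq 0}$, by Proposition \ref{simplesospr} applied to $H^0_{r,-}(2p-1-a+2pt)$ (respectively $H^0_{r,-}(2p+a+2pt)$) both $L_{r,-}(a+2pt)$ and $L_{r,-}(2p-1-a+2pt)$ (respectively $L_{r,-}(2p-1-a+2pt)$ and $L_{r,-}(2p+a+2pt)$) appear as composition factors, exactly as in the $G$-case, because the words $w\in W_0$ used in that argument belong to $W^r$. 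Invoking Proposition \ref{simplelinkage} on these induced supermodules supplies the needed $\mathrm{Ext}^1$ links, producing a chain from any nonnegative weight in $\mathscr{B}_a$ to any other.

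To extend the chain to negative weights, I would invoke the isomorphism $L_{r,-}(k)\simeq L_{r,-}(k+sp^r)\otimes K_{-sp^r}$ from the remark preceding the proposition: tensoring by $K_{-sp^r}$ is exact and preserves $\mathrm{Ext}^1$, so linkage is transported by shifts through $p^r\mathbb{Z}$. One verifies that such shifts permute the sets $\mathscr{B}_0,\dots,\mathscr{B}_{p-1}$ (they shift residues modulo $2p$ by $p$ when the shift amount is an odd multiple of $p^r$, which still sends $\mathscr{B}_a$ to some $\mathscr{B}_{a'}$), and consequently every weight in $\mathscr{B}_a$ is linked via a combination of a positive-weight chain and a $p^r$-shift. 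The main obstacle I anticipate is the bookkeeping in this last step: one must check that the shift isomorphism is compatible with the block labeling, so that a chain built on $\mathbb{Z}_{\geq 0}$ really does cover all integers of $\mathscr{B}_a$ and does not accidentally merge two distinct blocks. Once that is handled, combining the two halves shows $\mathscr{B}_0,\dots,\mathscr{B}_{p-1}$ are exactly the blocks of $G_rT$.
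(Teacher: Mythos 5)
Your argument is correct and follows essentially the same route as the paper: the composition-factor analysis from Proposition \ref{simplesospr} (using that the relevant words lie in $W_0\subset W^r$) reproduces the linking scheme of Proposition \ref{blocksosp}, and arbitrary (in particular negative) weights are handled, exactly as in the paper, by shifting both weights by one and the same multiple of $p^r$ into the range $\geq p^r$ and transporting linkage through the equivalence given by tensoring with $K_{tp^r}$, which permutes the sets $\mathscr{B}_a$ as you note. One small repair: Proposition \ref{simplelinkage} points in the wrong direction to ``supply'' $\mathrm{Ext}^1$-links (it constrains where nonzero $\mathrm{Ext}^1$ can occur, it does not produce it); the fact you actually need, and the one the paper uses implicitly, is that $H^0_{r,-}(l)$ has a simple socle and is therefore indecomposable, so all of its composition factors lie in a single block, which is exactly what makes your chains within each $\mathscr{B}_a$ work.
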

\begin{proof}
First consider only pair of weights $l_1, l_2$ such that $p^r\leq l_1,l_2$.

Since the statements of Proposition \ref{simplesospr} differ from the statements of Proposition \ref{simplesosp} 
only in that $W^u$ is replaced by $W^r$,
the proof of our statement follows from the proof of Proposition \ref{blocksosp}, if we take into account that the linkage there is 
given by weights of the form $l-1$ or $\ell_{l-1}(w)$, where $w\in W_0\subset W^r$.

To complete the argument, it remains to apply the shift by a suitable multiple of $p^r$ so that both $\tilde{l}_1=l_1+tp^r, \tilde{l}_2=l_2+tp^r\geq p^r$.
\end{proof}

The simple composition factors of the kernel, image and cokernel of the morphism $\psi_{r, k}: H^0_{r, +}(-k)\to H^0_{r, -}(2p^r-k-1))$ from Lemma \ref{morphr} are described using the following Proposition.  

\begin{pr} The image of the morphism $\psi_{r,k}$ is isomorphic to $L_{r,-}(2p^r-k-1)$; the composition factors of the kernel (and cokernel) of the morphism
$\psi_{r,k}$ are all simple composition factors of $H^0_{r,+}(-k)$ (or $H^0_{r,-}(2p^r-k-1)$) except for $L_{r,-}(2p^r-k-1)$.
\end{pr}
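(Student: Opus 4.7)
The plan is to compute the image of $\psi_{r,k}$ explicitly from the formulas in Lemma~\ref{morphr} and identify it with the socle $L_{r,-}(2p^r-k-1)$ described in Lemma~\ref{Frobsimple-}; the composition-factor statements for kernel and cokernel then follow from the two obvious short exact sequences. Since $x_{-1,-1}^{p^r}$ and $x_{1,1}^{p^r}$ are group-like, it suffices to handle $p^r \leq k < 2p^r$; set $j = k - p^r$ and $l = 2p^r - k - 1 = p^r - j - 1$.

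For the inclusion $L_{r,-}(l) \subseteq \operatorname{im}\psi_{r,k}$, Lemma~\ref{morphr} gives $\psi_{r,k}(x_{-1,-1}^{k-p^r} x_{-1,1}^{p^r-1} x_{-1,\overline{0}}) = x_{1,1}^{l}$, and $x_{1,1}^{l}$ generates $L_{r,-}(l)$ as a $\mathrm{Dist}(U^-_r)$-module by the $G_rT$-analogue of Lemma~\ref{soclewrtlower} (which underlies the socle description in Lemma~\ref{Frobsimple-}).

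For the reverse inclusion, the identity $n\binom{n-1}{m} = (m+1)\binom{n}{m+1}$ rewrites $(k-i)\binom{k-i-1}{j}$ as $(j+1)\binom{k-i}{j+1}$. With $s = p^r - 1 - i$, the formulas in Lemma~\ref{morphr} take the form
\[
\psi_{r,k}(x_{-1,-1}^{k-i} x_{-1,1}^i) = (j+1)\binom{j+1+s}{j+1}\, x_{1,1}^{l-1-s} x_{1,-1}^s x_{1,\overline{0}},
\]
\[
\psi_{r,k}(x_{-1,-1}^{k-i-1} x_{-1,1}^i x_{-1,\overline{0}}) = \binom{j+s}{j}\, x_{1,1}^{l-s} x_{1,-1}^s.
\]
Since $l-1 = (p^r-1) - (j+1)$, the base-$p$ digits of $l-1$ are complementary to those of $j+1$, so Kummer's theorem identifies non-vanishing of $\binom{j+1+s}{j+1}$ (resp.\ $\binom{j+s}{j}$) modulo $p$ with $\binom{l-1}{s} \not\equiv 0$ (resp.\ $\binom{l}{s} \not\equiv 0$) modulo $p$; the prefactor $(j+1)$ vanishes modulo $p$ precisely when $p \mid l$. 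Comparing with Lemma~\ref{Frobsimple-}, the nonzero image vectors are scalar multiples of exactly the displayed basis elements of $L_{r,-}(l)$, while index combinations producing a would-be negative $x_{1,1}$-exponent force a vanishing prefactor.

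With the image identified, the exact sequences
\[
0 \to \ker\psi_{r,k} \to H^0_{r,+}(-k) \to L_{r,-}(l) \to 0, \qquad 0 \to L_{r,-}(l) \to H^0_{r,-}(l) \to \operatorname{coker}\psi_{r,k} \to 0
\]
yield the composition-factor claims once $L_{r,-}(l)$ is shown to occur with multiplicity one in each of $H^0_{r,+}(-k)$ and $H^0_{r,-}(l)$. Both supermodules have dimension $2p^r$ and the same character (each weight in $\{-k,\ldots,2p^r-k-1\}$ appears with multiplicity one), hence share the same composition factors; by Proposition~\ref{simplesospr} each of these factors appears with multiplicity one. The main hurdle is the Kummer/Lucas bookkeeping above, in particular the complementary-digits identity aligning the no-carry condition with divisibility of $\binom{l-1}{s}$ and $\binom{l}{s}$, together with the matching of the two cases $p \mid l$ versus $p \nmid l$ between the image formulas and the socle basis.
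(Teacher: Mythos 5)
Your proposal is correct, but it identifies the image by a genuinely different route than the paper. The paper transfers the problem back to the non-thickened group: using the fact (from the proof of Lemma \ref{morphr}) that $\psi_{r,k}$ is induced from $\psi_{-k,k-1-2j_r}$ along the epimorphism $K[G]\to K[G_rT]$, it quotes Propositions \ref{kercokk-1} and \ref{kercokk-1-2j} (whose proofs rest on the word combinatorics behind Proposition \ref{simplesosp}) to conclude that the image is the single simple $L_{r,-}(2p^r-k-1)$, and then reads off the kernel and cokernel factors from Proposition \ref{simplesospr}; like you, it first records the character equality $\chi(H^0_{r,+}(-k))=\chi(H^0_{r,-}(2p^r-k-1))$ and reduces to $p^r\leq k<2p^r$ by a $p^r$-shift. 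You instead compute the image directly from the explicit formulas of Lemma \ref{morphr}: the rewriting $(k-i)\binom{k-i-1}{j}=(j+1)\binom{k-i}{j+1}$, the complementary-digit Lucas/Kummer observation identifying nonvanishing of $\binom{j+s}{j}$ and $\binom{j+1+s}{j+1}$ with nonvanishing of $\binom{l}{s}$ and $\binom{l-1}{s}$, and the fact that $p\mid(j+1)$ exactly when $p\mid l$, match the nonzero image vectors exactly with the socle basis of Lemma \ref{Frobsimple-} in both cases $p\mid l$ and $p\nmid l$; the kernel/cokernel statements then need only the character equality and the multiplicity-one observation, not the full classification of Proposition \ref{simplesospr}. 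Your argument is more elementary and self-contained (it exhibits the image as an explicit subspace and bypasses the word machinery and the slightly delicate truncation step in the paper, where one must see that only the weight $k-1-2j_r=2p^r-k-1$ survives), while the paper's argument is shorter on the page because it recycles the $SpO(2|1)$-level computations already established.
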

\begin{proof}
Start by observing that $\chi(H^0_{r,+}(-k))=\sum_{j=-k}^{2p^r-k-1} x^j=\chi(H^0_{r,-}(2p^r-k-1))$, hence the simple composition factors of the domain and codomain of $\psi_{r,k}$ coincide and are described in Proposition \ref{simplesospr}.
Consequently, it is enough to describe simple composition factors of the image of $\psi_{r,k}$.

Assume $p^r\leq k <2p^r$. In this case $W^u=W^r$. Denote by $j_r$ that number $0\leq j_r <p^r$ that satisfies $k\equiv j_r \pmod{p^r}$. 
Because the morphism $\psi_{r,k}$ is uniquely determined by $x_{-1,-1}^{k-p^r}x_{-1,1}^{p^r-1}x_{-1,\overline{0}} \mapsto x_{1,1}^{2p^r-k-1}$ and 
the morphism $\psi_{-k,k-1-2j_r}: H^0_+(-k)\to H^0_-(k-1-2j_r)$ is uniquely determined by $x_{-1,-1}^{k-p^r}x_{-1,1}^{p^r-1}x_{-1,\overline{0}} \mapsto x_{1,1}^{k-1-2j_r}$, and 
$k-1-2j_r=2p^r-k-1=l$, the simple composition factors of the image of $\psi_{r,k}$ correspond to simple composition factors of the image of $\psi_{-k,k-1-2j_r}$ under a bijection preserving the highest weights.

If $j_r=0$, that is $k=p^r$, then by Proposition \ref{kercokk-1} the image of $\psi_{-k,k-1-2j_r}$ is isomorphic to the simple $L_{-}(k-1)$.
Therefore, the image of $\psi_{r,k}$ is the simple supermodule $L_{r,-}(2p^r-k-1)$. 

If $j_r>0$, then by Proposition \ref{kercokk-1-2j} the image of $\psi_{-k,k-1-2j_r}$ is isomorphic to a simple supermodule $L_-(k-1-2j_r)$ 
(because $\ell_k(\underbrace{\geq \ldots \geq}_r)=k-1-2j_r$). Therefore, the image of $\psi_{r,k}$ is the simple supermodule $L_{r,-}(2p^r-k-1)$. 

The description of the simple composition factors of the kernel and cokernel of $\psi_{r,k}$ now follows from Proposition \ref{simplesospr}.

If $k$ is arbitrary, then we find $t$ such that $p^r\leq \tilde{k}=k-tp^r<2p^r$ and use isomorphisms 
$H_{r, +}(-k)\cong H_{r, +}(-\tilde{k})\otimes K_{-tp^r}$ and $H^0_{r,-}(2p^r-k-1)\cong H^0_{r,-}(2p^r-\tilde{k}-1)\otimes K_{-tp^r}$ that relate the images of 
$\psi_{r,k}$ and $\psi_{r,\tilde{k}}$.
\end{proof}

\section{Strong linkage for $G_r T$}

\subsection{Preliminaries}

Following \cite{marzub}, for every $\lambda\in X(T)$ and $a=0, 1$ let 
$\mathcal{Z}'_{r, \mathcal{F}}(\lambda^a)$ and $\mathcal{Z}_{r, \mathcal{F}}(\lambda^a)$ denote
$ind^{G_r T}_{B^-(\mathcal{F})_r T} K^a_{\lambda}$ and $coind^{G_r T}_{B^+(\mathcal{F})_r T} K^a_{\lambda}$, respectively. 
If $\mathcal{F}=<1, \ldots, n, \overline{1}, \ldots, \overline{m}>$, then $\mathcal{Z}'_{r, \mathcal{F}}(\lambda^a)$ and $\mathcal{Z}_{r, \mathcal{F}}(\lambda^a)$ are denoted just by $\mathcal{Z}'_{r}(\lambda^a)$ and $\mathcal{Z}_{r}(\lambda^a)$.

Fix a maximal isotropic flag $\mathcal{F}$ and partially order all weights as $\mu\leq_{\mathcal{F}} \lambda$ if and only if $\lambda-\mu\in\sum_{\alpha\in\Phi^+(\mathcal{F})}\mathbb{N}\alpha$.
Let $\rho_0(\mathcal{F})$ and $\rho_1(\mathcal{F})$ denote $\frac{1}{2}(\sum_{\alpha\in\Phi^+(\mathcal{F})_0}\alpha)$ and $\frac{1}{2}(\sum_{\alpha\in\Phi^+(\mathcal{F})_1}\alpha)$, respectively, and $\rho(\mathcal{F})=\rho_0(\mathcal{F})-\rho_1(\mathcal{F})$. If $\mathcal{F}=<1, \ldots , n, \overline{1}, \ldots, \overline{m}>$, then 
$\rho_0(\mathcal{F}), \rho_1(\mathcal{F})$ and $\rho(\mathcal{F})$ are denoted just by $\rho_0, \rho_1$ and $\rho$, respectively. 
For a weight $\lambda\in X(T)$ let $\lambda<\mathcal{F}>$ denote the weight \[\lambda+(p^r -1)(\rho_0(\mathcal{F})-\rho_0)+(\rho_1(\mathcal{F})-\rho_1).\]
\begin{lm}\label{soclesandtops}
The supermodule $\mathcal{Z}'_{r, \mathcal{F}}(\lambda^a)$ has a simple socle and the supermodule $\mathcal{Z}_{r, \mathcal{F}}(\lambda^a)$ has a simple top. Moreover, these simple
supermodules are isomorphic to each other.
\end{lm}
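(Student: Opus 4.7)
The plan is to use the triangular decomposition $G_rT = B^{-}(\mathcal{F})_rT \cdot U^{+}(\mathcal{F})_r$ of superschemes, which follows from the Borel semidirect decomposition $B^{+}(\mathcal{F}) = T \ltimes U^{+}(\mathcal{F})$ of Section 2 combined with Lemma \ref{thickening}. From this one sees that $\mathcal{Z}'_{r,\mathcal{F}}(\lambda^a) = ind^{G_rT}_{B^{-}(\mathcal{F})_rT} K^a_\lambda$, realised as the set of $B^{-}(\mathcal{F})_rT$-equivariant functions $G_rT \to K^a_\lambda$ for the right translation $G_rT$-action, is finite-dimensional, with $\lambda$ as the unique $\leq_{\mathcal{F}}$-maximal weight and one-dimensional $\lambda$-weight space. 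Analogously, $\mathcal{Z}_{r,\mathcal{F}}(\lambda^a) \cong Dist(U^{-}(\mathcal{F})_r)\otimes K^a_\lambda$ via the tensor-product description of coinduction and has $\lambda$ as its unique $\leq_{\mathcal{F}}$-maximal weight, again with one-dimensional weight space. The special case $\mathcal{F}=\mathcal{F}^-$ of Lemma \ref{Frobsimple-} can be used as a template.

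For the socle of $\mathcal{Z}'_{r,\mathcal{F}}(\lambda^a)$, I would compute the $U^{+}(\mathcal{F})_r$-invariants directly. Every $U^{+}(\mathcal{F})_r$-invariant $f$ descends to a function on $G_rT/U^{+}(\mathcal{F})_r \cong B^{-}(\mathcal{F})_rT$; the $B^{-}(\mathcal{F})_rT$-equivariance condition $f(bg)=\lambda(b)f(g)$ then determines $f$ from its value $f(e) \in K^a_\lambda$. Hence
\[ \mathcal{Z}'_{r,\mathcal{F}}(\lambda^a)^{U^{+}(\mathcal{F})_r} = K^a_\lambda, \]
the one-dimensional $\lambda$-weight subspace. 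Since $U^{+}(\mathcal{F})_r$ is unipotent, every nonzero $G_rT$-subsupermodule $M$ of $\mathcal{Z}'_{r,\mathcal{F}}(\lambda^a)$ has nonzero $U^{+}(\mathcal{F})_r$-invariants and therefore contains this $\lambda$-weight space. Consequently $L_{r,\mathcal{F}}(\lambda^a) := Dist(G_rT)\cdot K^a_\lambda$ is contained in every nonzero subsupermodule, and so it is the unique minimal subsupermodule, i.e. the simple socle.

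For the top of $\mathcal{Z}_{r,\mathcal{F}}(\lambda^a)$, the dual argument applies: the triangular decomposition gives $Dist(G_rT) = Dist(U^{-}(\mathcal{F})_r)\cdot Dist(B^{+}(\mathcal{F})_rT)$, so $\mathcal{Z}_{r,\mathcal{F}}(\lambda^a)$ is cyclically generated by the weight vector $1\otimes v_\lambda$. Any proper subsupermodule misses the one-dimensional $\lambda$-weight space, so the sum of all proper subsupermodules is again proper; hence there is a unique maximal proper subsupermodule, whose quotient is the simple top. Alternatively, the super-transposition functor $V \mapsto V^{\langle t \rangle}$ of \cite{zub3} exchanges $\mathcal{Z}'_{r,\mathcal{F}}(\lambda^a)$ and $\mathcal{Z}_{r,\mathcal{F}}(\lambda^a)$ (up to parity), directly transferring the socle statement to the top statement.

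The simple socle of $\mathcal{Z}'_{r,\mathcal{F}}(\lambda^a)$ and the simple top of $\mathcal{Z}_{r,\mathcal{F}}(\lambda^a)$ are then both simple $G_rT$-supermodules generated by a weight vector of weight $\lambda$ and parity $a$, each with a one-dimensional $\lambda$-weight space. Since simple $G_rT$-supermodules are determined up to isomorphism by their $\leq_{\mathcal{F}}$-highest weight and parity, the two simples coincide. The main technical point to handle carefully is the $U^{+}(\mathcal{F})_r$-invariant computation via the triangular decomposition in the Frobenius-thickened setting; once that is in place, the remaining statements are formal consequences.
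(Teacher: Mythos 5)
Your proposal is correct and follows essentially the same route as the paper, which simply instructs the reader to copy the proofs of Lemmas 4.1 and 4.2 from \cite{marzub}: the triangular decomposition $G_rT\simeq B^-(\mathcal{F})_rT\times U^+(\mathcal{F})_r$, the one-dimensionality of the $U^+(\mathcal{F})_r$-invariants, and unipotence of $U^+(\mathcal{F})_r$ forcing every nonzero subsupermodule to contain the $\lambda$-weight line, with the dual generation argument for the coinduced module. The only cosmetic difference is your final step: you invoke the classification of simple $G_rT$-supermodules by highest weight and parity, whereas the template argument identifies the socle with the top via the (unique up to scalar) nonzero homomorphism $\mathcal{Z}_{r,\mathcal{F}}(\lambda^a)\to\mathcal{Z}'_{r,\mathcal{F}}(\lambda^a)$, whose image is simultaneously the simple socle and a copy of the simple top; both conclusions are standard and equivalent here.
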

\begin{proof}
Copy the proof of Lemmas 4.1 and 4.2 from \cite{marzub}.
\end{proof}

Denote by $\mathcal{L}_{r, \mathcal{F}}(\lambda^a)$ the simple socle of $\mathcal{Z}'_{r, \mathcal{F}}(\lambda^a)$ 
(which is isomorphic to the simple top of $\mathcal{Z}_{r, \mathcal{F}}(\lambda^a)$).
If $\mathcal{F}=<1, \ldots, n, \overline{1}, \ldots, \overline{m}>$, then $\mathcal{L}_{r, \mathcal{F}}(\lambda^a)$ is denoted just by $\mathcal{L}_{r}(\lambda^a)$.
As a byproduct of the proof of Lemma \ref{soclesandtops} we infer that either one of the conditions 
$\mathcal{Z}'_{r, \mathcal{F}}(\lambda^a)_{\mu}\neq 0, \mathcal{Z}_{r, \mathcal{F}}(\lambda^a)_{\mu}\neq 0$ or $\mathcal{L}_{r, \mathcal{F}}(\lambda^a)_{\mu}\neq 0$ implies 
$\mu\leq_{\mathcal{F}}\lambda$. 
Moreover, $\mathcal{Z}'_{r, \mathcal{F}}(\lambda^a)_{\lambda}=\mathcal{L}_{r, \mathcal{F}}(\lambda^a)_{\lambda}=\mathcal{Z}'_{r, \mathcal{F}}(\lambda^a)_{\mu}^{U^+(\mathcal{F})_r}$ is a one-dimensional supersubspace of parity $a$. 
Therefore every composition factor of $\mathcal{Z}'_{r, \mathcal{F}}(\lambda^a)$ (or $\mathcal{Z}_{r, \mathcal{F}}(\lambda^a)$, respectively) is isomorphic to 
$\mathcal{L}_{r, \mathcal{F}}(\mu^b)$, where $\mu\leq\lambda$ and $p(\lambda)+a\equiv p(\mu)+b \pmod 2$. 
Besides, $\mathcal{L}_{r, \mathcal{F}}(\lambda^a)$ has the multiplicity one in both $\mathcal{Z}'_{r, \mathcal{F}}(\lambda^a)$ and $\mathcal{Z}_{r, \mathcal{F}}(\lambda^a)$.

\begin{lm}\label{duality} For every $\lambda\in X(T)$ there are isomorphisms
\[\begin{aligned}&\mathcal{Z}_{r, \mathcal{F}}(\lambda)\simeq ind_{B^+(\mathcal{F})_r T}^{G_r T} K^{|(m+1)n|}_{\lambda -2((p^r -1)\rho_0(\mathcal{F}) +\rho_1(\mathcal{F}))}\\
&\simeq \Pi^{|(m+1)n|}(\mathcal{Z}_{r, \mathcal{F}} (2((p^r -1)\rho_0(\mathcal{F}) +\rho_1(\mathcal{F}))-\lambda)^*\end{aligned}\] 
and 
\[\begin{aligned}&\mathcal{Z}'_{r, \mathcal{F}}(\lambda)\simeq coind^{G_r T}_{B^-(\mathcal{F})_r T} K^{|(m+1)n|}_{\lambda -2((p^r -1)\rho_0(\mathcal{F}) +\rho_1(\mathcal{F}))} \\
&\simeq \Pi^{|(m+1)n|}(\mathcal{Z}'_{r, \mathcal{F}} (2((p^r -1)\rho_0(\mathcal{F}) +\rho_1(\mathcal{F}))-\lambda)^*.\end{aligned}\]
Their characters are given as  
\[\mathsf{ch}(\mathcal{Z}_{r, \mathcal{F}}(\lambda))=\mathsf{ch}(\mathcal{Z}'_{r, \mathcal{F}}(\lambda))=
e^{\lambda}\prod_{\alpha\in\Phi^+(\mathcal{F})_0}\frac{1-e^{-p^r\alpha}}{1-e^{-\alpha}}
\prod_{\alpha\in\Phi^+(\mathcal{F})_1}(1+e^{-\alpha}).\] 
Additionally, for every $\mu\in X(T)$ we have
\[\mathcal{Z}_{r, \mathcal{F}}(\lambda+p^r\mu)\simeq \mathcal{Z}_{r, \mathcal{F}}(\lambda)\otimes p^r\mu \text{ and } 
\mathcal{Z}'_{r, \mathcal{F}}(\lambda+p^r\mu)\simeq \mathcal{Z}'_{r, \mathcal{F}}(\lambda)\otimes p^r\mu.\]
\end{lm}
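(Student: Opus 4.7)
The plan is to adapt the proofs of the analogous statements for $GL(m|n)$ in \cite{marzub} to the ortho-symplectic setting, with the $SpO(2|1)$ case treated in Section 7 serving as prototype. The foundational input is a triangular decomposition of $G_rT$ as a superscheme: generalizing the isomorphism $\phi$ used in the $SpO(2|1)$ discussion, multiplication
\[U^-(\mathcal{F})_r \times B^+(\mathcal{F})_rT \longrightarrow G_rT\]
is an isomorphism, and similarly with the roles of $B^+$ and $B^-$ exchanged. This can be read off from the root-space decomposition $\mathfrak{g}=\mathfrak{u}^-(\mathcal{F})\oplus Lie(T)\oplus\mathfrak{u}^+(\mathcal{F})$ together with the structure of the one-parameter subsupergroups $U_\alpha$ introduced in Section 2.

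From this decomposition, $\mathcal{Z}'_{r,\mathcal{F}}(\lambda)=ind^{G_rT}_{B^-(\mathcal{F})_rT}K_\lambda$ becomes free of rank one over $K[U^+(\mathcal{F})_r]$ as a $T$-supermodule, with cyclic generator of weight $\lambda$. Now $K[U^+(\mathcal{F})_r]$ factors as a tensor product over $\alpha\in\Phi^+(\mathcal{F})$: a truncated polynomial algebra in a generator of weight $-\alpha$ (with $x_{-\alpha}^{p^r}=0$) for each even $\alpha$, and an exterior-algebra factor in a generator of weight $-\alpha$ for each odd $\alpha$. This yields at once the displayed character formula for $\mathcal{Z}'_{r,\mathcal{F}}(\lambda)$; the symmetric argument with $U^-(\mathcal{F})_r$ gives the same character for $\mathcal{Z}_{r,\mathcal{F}}(\lambda)$.

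For the duality between induction and coinduction I invoke the super-analogue of the classical fact that for a closed subsupergroup $R\leq H$ there is a natural isomorphism
\[coind^H_R V \cong ind^H_R\bigl(V\otimes \delta_R\bigr),\]
where $\delta_R$ encodes the Berezinian of $R$ and supplies both a weight shift and a parity flip. For $R=B^+(\mathcal{F})_rT$, the previous paragraph identifies the top form of $K[U^+(\mathcal{F})_r]$ as having $T$-weight $-2((p^r-1)\rho_0(\mathcal{F})+\rho_1(\mathcal{F}))$ and parity equal to the number of odd positive roots modulo two, matching $|(m+1)n|$. The two displayed isomorphisms for $\mathcal{Z}_{r,\mathcal{F}}$ and $\mathcal{Z}'_{r,\mathcal{F}}$ now follow, the second in each pair using the standard supermodule duality $(ind^{G_rT}_{B^\pm_rT}V)^*\cong coind^{G_rT}_{B^\pm_rT}V^*$. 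The final tensor-product identity is immediate because $p^r\mu\in p^rX(T)$ extends to a $G_rT$-character trivial on $G_r$, so tensoring with $K_{p^r\mu}$ commutes with induction and coinduction from any subsupergroup containing $T$.

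The main obstacle is the careful bookkeeping of the super modular character $\delta_R$, in particular verifying that the parity contribution matches $|(m+1)n|$ under the paper's conventions; the remaining algebraic manipulations are routine adaptations of the arguments in \cite{marzub}.
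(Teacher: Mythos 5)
Your proposal is correct and follows essentially the same route as the paper, whose proof of this lemma is simply to transfer the proof of Proposition 4.9 of \cite{marzub}: triangular decomposition of $G_rT$, the description of $K[U^{\pm}(\mathcal{F})_r]$ as a tensor product of truncated polynomial and exterior factors indexed by $\Phi^+(\mathcal{F})$, the induction--coinduction shift by the top weight and parity of that algebra, and the tensor identity for weights in $p^rX(T)$. The one point you defer (the parity of the modular twist) is worth stating precisely: your argument produces parity $|\Phi^+(\mathcal{F})_1|\bmod 2$, i.e.\ $(2m+1)n$ for $SpO(2n|2m+1)$ and $2mn$ for $SpO(2n|2m)$, so the identification with the label $|(m+1)n|$ appearing in the statement is a matter of the paper's notation rather than a defect of your method.
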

\begin{proof}
Copy the proof of Proposition 4.9 from \cite{marzub}.
\end{proof}
Let $\mathcal{F}=<1, \ldots, n, \overline{1}, \ldots, \overline{m}>$. In the same way as in \cite{marzub}, we obtain the following corollary.
\begin{cor}\label{atopof}
There is an isomorphism $\mathcal{Z}'_{r, -\mathcal{F}}(\lambda<-\mathcal{F}>)\simeq\mathcal{Z}_r(\lambda^{|(m+1)n|})$. Consequently, 
$\mathcal{L}_r(\lambda)$ is isomorphic (up to a parity shift) to the top of the supermodule $\mathcal{Z}'_{r, -\mathcal{F}}(\lambda<-\mathcal{F}>)$.
\end{cor}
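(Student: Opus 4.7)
The plan is to unwind the definitions on both sides of the claimed isomorphism and check they agree on the nose. First I would compute the shift $\lambda\langle -\mathcal{F}\rangle$. Since $\Phi^+(-\mathcal{F})=-\Phi^+(\mathcal{F})$, we have $\rho_0(-\mathcal{F})=-\rho_0$ and $\rho_1(-\mathcal{F})=-\rho_1$, so
\[
\lambda\langle-\mathcal{F}\rangle
=\lambda+(p^r-1)(-\rho_0-\rho_0)+(-\rho_1-\rho_1)
=\lambda-2\bigl((p^r-1)\rho_0+\rho_1\bigr).
\]
Next, using $B^-(-\mathcal{F})=B^+(\mathcal{F})=B^+$, the left-hand side becomes by definition
\[
\mathcal{Z}'_{r,-\mathcal{F}}(\lambda\langle-\mathcal{F}\rangle)
=\mathrm{ind}^{G_rT}_{B^+_rT} K_{\lambda-2((p^r-1)\rho_0+\rho_1)}.
\]

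For the right-hand side, I would apply the first isomorphism of Lemma \ref{duality} with $\mathcal{F}=\langle 1,\ldots,n,\overline{1},\ldots,\overline{m}\rangle$, which gives
\[
\mathcal{Z}_r(\lambda)\simeq \mathrm{ind}^{G_rT}_{B^+_rT} K^{|(m+1)n|}_{\lambda-2((p^r-1)\rho_0+\rho_1)}.
\]
Performing an additional parity shift by $|(m+1)n|$ (i.e.\ passing to $\mathcal{Z}_r(\lambda^{|(m+1)n|})$) cancels the parity on the right, matching the induced supermodule obtained on the previous line. This establishes the first assertion.

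The second assertion then follows formally. By Lemma \ref{soclesandtops}, $\mathcal{Z}_r(\lambda^{|(m+1)n|})$ has a simple top, which by definition is $\mathcal{L}_r(\lambda^{|(m+1)n|})$; this is isomorphic to $\mathcal{L}_r(\lambda)$ up to a parity shift by $|(m+1)n|$. Transporting this description through the isomorphism proved in the previous paragraph yields the claimed identification of the top of $\mathcal{Z}'_{r,-\mathcal{F}}(\lambda\langle-\mathcal{F}\rangle)$.

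There is no genuine obstacle in this corollary: it is purely a bookkeeping exercise in the notation set up by Lemma \ref{duality} and the definition of $\lambda\langle\mathcal{F}\rangle$. The only point where care is required is consistently tracking the parity shift $|(m+1)n|$ coming from the super-transposition/duality in Lemma \ref{duality}, and the sign change of $\rho_0,\rho_1$ under $\mathcal{F}\mapsto-\mathcal{F}$; once these are handled the two induced supermodules are literally the same object.
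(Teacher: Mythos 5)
Your proposal is correct and follows essentially the same route the paper intends (it refers to the analogous corollary in \cite{marzub}): you specialize the first isomorphism of Lemma \ref{duality} to the standard flag, compute $\lambda\langle-\mathcal{F}\rangle=\lambda-2((p^r-1)\rho_0+\rho_1)$ from $\rho_0(-\mathcal{F})=-\rho_0$, $\rho_1(-\mathcal{F})=-\rho_1$, use $B^-(-\mathcal{F})=B^+$, and then invoke Lemma \ref{soclesandtops} for the simple top. The parity bookkeeping with the shift $|(m+1)n|$ is handled correctly, so nothing is missing.
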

\begin{lm}\label{characters}
For every maximal isotropic flag $\mathcal{F}$ the formal characters of $\mathcal{Z}'_{r, \mathcal{F}}(\lambda<\mathcal{F}>)$ and $\mathcal{Z}'_{r}(\lambda)$ (as well as the formal characters of $\mathcal{Z}_{r, \mathcal{F}}(\lambda<\mathcal{F}>)$ and $\mathcal{Z}_{r}(\lambda)$) coincide.
 
In particular, the simple supermodule $\mathcal{L}_r(\mu)$ is a composition factor of $\mathcal{Z}'_{r, \mathcal{F}}(\lambda<\mathcal{F}>)$ 
(or $\mathcal{Z}_{r, \mathcal{F}}(\lambda<\mathcal{F}>)$, respectively) if and only if it is a composition factor of $\mathcal{Z}'_{r}(\lambda)$ (or $\mathcal{Z}'_{r}(\lambda)$, respectively).
\end{lm}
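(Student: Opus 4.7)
The plan is to apply the character formula from Lemma \ref{duality} to both $\mathcal{Z}_{r, \mathcal{F}}(\lambda<\mathcal{F}>)$ and $\mathcal{Z}_{r}(\lambda)$ and verify that the exponential prefactor introduced by the shift $\lambda\mapsto\lambda<\mathcal{F}>$ exactly compensates for the change in the root products when $\Phi^+$ is replaced by $\Phi^+(\mathcal{F})$.

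Decompose each positive system into a common part and a flipped part, writing $\Phi^+(\mathcal{F})_i = T_i\sqcup A_i$ and $\Phi^+_i = T_i\sqcup (-A_i)$ for $i=0,1$, where $T_i=\Phi^+(\mathcal{F})_i\cap\Phi^+_i$. For an even root $\alpha\in -A_0$ (so that $-\alpha\in A_0$) one has the identity
\[
\frac{1-e^{-p^r(-\alpha)}}{1-e^{-(-\alpha)}}=e^{(p^r-1)\alpha}\cdot\frac{1-e^{-p^r\alpha}}{1-e^{-\alpha}},
\]
while for an odd root $\alpha\in -A_1$ one has $1+e^{-(-\alpha)}=e^{\alpha}(1+e^{-\alpha})$. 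Multiplying over $A_0$ and $A_1$ respectively and using $\sum_{\alpha\in -A_0}\alpha=\rho_0-\rho_0(\mathcal{F})$ and $\sum_{\alpha\in -A_1}\alpha=\rho_1-\rho_1(\mathcal{F})$ (which follow from $2\rho_i=\sum_{\alpha\in T_i}\alpha+\sum_{\alpha\in -A_i}\alpha$ and $2\rho_i(\mathcal{F})=\sum_{\alpha\in T_i}\alpha-\sum_{\alpha\in -A_i}\alpha$), I obtain
\[
\prod_{\alpha\in\Phi^+(\mathcal{F})_0}\frac{1-e^{-p^r\alpha}}{1-e^{-\alpha}}
=e^{(p^r-1)(\rho_0-\rho_0(\mathcal{F}))}\prod_{\alpha\in\Phi^+_0}\frac{1-e^{-p^r\alpha}}{1-e^{-\alpha}},
\]
\[
\prod_{\alpha\in\Phi^+(\mathcal{F})_1}(1+e^{-\alpha})=e^{\rho_1-\rho_1(\mathcal{F})}\prod_{\alpha\in\Phi^+_1}(1+e^{-\alpha}).
\]

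Multiplying these by $e^{\lambda<\mathcal{F}>}=e^{\lambda+(p^r-1)(\rho_0(\mathcal{F})-\rho_0)+(\rho_1(\mathcal{F})-\rho_1)}$, the two correction exponentials cancel the shift, leaving
\[
\mathsf{ch}(\mathcal{Z}_{r,\mathcal{F}}(\lambda<\mathcal{F}>))
=e^{\lambda}\prod_{\alpha\in\Phi^+_0}\frac{1-e^{-p^r\alpha}}{1-e^{-\alpha}}\prod_{\alpha\in\Phi^+_1}(1+e^{-\alpha})
=\mathsf{ch}(\mathcal{Z}_{r}(\lambda)),
\]
and the same holds for $\mathcal{Z}'$ in place of $\mathcal{Z}$.

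For the composition factor statement, the simples $\mathcal{L}_r(\mu)$ (up to parity shift) are indexed by their highest weights, and the map $[M]\mapsto \mathsf{ch}(M)$ on the Grothendieck group is injective because each simple has a unique maximal weight. Hence an equality of characters forces an equality of composition multiplicities, so $\mathcal{L}_r(\mu)$ occurs in $\mathcal{Z}'_{r,\mathcal{F}}(\lambda<\mathcal{F}>)$ exactly when it occurs in $\mathcal{Z}'_r(\lambda)$, and similarly for $\mathcal{Z}_{r,\mathcal{F}}$ versus $\mathcal{Z}_r$. I do not anticipate any real obstacle; the only bookkeeping point is ensuring the $\rho_i(\mathcal{F})-\rho_i$ identities are read off correctly from the $T_i/A_i$ decomposition, but this is forced by the definitions.
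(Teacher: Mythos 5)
Your proposal is correct and follows essentially the same route as the paper: the paper handles the character identity by citing the analogous computation in \cite{marzub} (which rests on the product formula of Lemma \ref{duality}, exactly the manipulation you carry out explicitly, with the $(p^r-1)(\rho_0(\mathcal{F})-\rho_0)+(\rho_1(\mathcal{F})-\rho_1)$ shift cancelling the sign-flipped factors), and then deduces the composition-factor statement from linear independence of the characters of the simples $\mathcal{L}_r(\mu)$, which is your highest-weight/triangularity argument.
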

\begin{proof}
For the first part, copy the proof of Lemma 4.10 from \cite{marzub}. Since the formal characters of simple supermodules $\mathcal{L}_r(\mu)$ are linearly independent, the second statement follows. 
\end{proof}
Lemma \ref{characters} has the following corollary.
\begin{cor}\label{morphismsbetweeninducedsupermodules}
For every $\lambda\in X(T)$ and every maximal isotropic flags $\mathcal{F}$ and $\mathcal{F}'$ there is an isomorphism
\[Hom_{G_r T}(\mathcal{Z}'_{r, \mathcal{F}}(\lambda<\mathcal{F}>), \mathcal{Z}'_{r, \mathcal{F}'}(\lambda<\mathcal{F}'>))\simeq K.\]
\end{cor}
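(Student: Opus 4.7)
\textbf{The plan} is to prove the two-sided dimension equality by exploiting that the simple supermodule $\mathcal{L}_r(\lambda)$, up to a parity shift, sits as the socle of each of the two supermodules with multiplicity exactly one. The strategy parallels the analogous Lemma~4.11 in \cite{marzub} for general linear supergroups. I would organize the argument into three steps: identifying the socles, bounding $\dim Hom$ above by $1$, and exhibiting a nonzero morphism for the lower bound.

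First, for either flag $\mathcal{F}''\in\{\mathcal{F},\mathcal{F}'\}$, Lemma \ref{soclesandtops} gives the simple socle of $\mathcal{Z}'_{r,\mathcal{F}''}(\lambda<\mathcal{F}''>)$ as $\mathcal{L}_{r,\mathcal{F}''}(\lambda<\mathcal{F}''>)$. The weight shift $\lambda<\mathcal{F}''>=\lambda+(p^r-1)(\rho_0(\mathcal{F}'')-\rho_0)+(\rho_1(\mathcal{F}'')-\rho_1)$ is designed precisely so that this socle is isomorphic, up to a parity shift, to $\mathcal{L}_r(\lambda)$; the parity shift is tracked via Lemma \ref{duality} and Corollary \ref{atopof}. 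By Lemma \ref{characters}, the formal character of $\mathcal{Z}'_{r,\mathcal{F}''}(\lambda<\mathcal{F}''>)$ coincides with that of $\mathcal{Z}'_r(\lambda)$, whose $\lambda$-weight space is one-dimensional; hence $\mathcal{L}_r(\lambda)$ appears as a composition factor with multiplicity exactly one in each of the two supermodules.

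For the upper bound $\dim Hom\le 1$, given two nonzero morphisms $\phi_1,\phi_2$, Schur's lemma shows that the restriction of each $\phi_i$ to the simple socle of the domain is a scalar multiple of a fixed (ungraded) inclusion $\iota:\mathcal{L}_r(\lambda)\hookrightarrow\mathcal{Z}'_{r,\mathcal{F}'}(\lambda<\mathcal{F}'>)$, say $\phi_i|_{\mathrm{soc}}=c_i\iota$. The combination $\psi:=c_2\phi_1-c_1\phi_2$ kills the socle of the domain; if $\psi\neq 0$, its image is a nonzero submodule of the target and hence contains the simple socle $\mathcal{L}_r(\lambda)$ of the target. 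Pulling back this copy yields a submodule of the domain which projects onto $\mathcal{L}_r(\lambda)$ but meets the socle trivially, producing a second occurrence of $\mathcal{L}_r(\lambda)$ in the composition series of the domain and contradicting the multiplicity-one conclusion. Hence $\psi=0$ and the two morphisms are proportional.

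For the lower bound $\dim Hom\ge 1$, Lemma \ref{duality} realizes the domain as a coinduced supermodule,
\[\mathcal{Z}'_{r,\mathcal{F}}(\lambda<\mathcal{F}>)\simeq coind^{G_rT}_{B^-(\mathcal{F})_rT}K^{|(m+1)n|}_{\mu_{\mathcal{F}}},\qquad \mu_{\mathcal{F}}=\lambda<\mathcal{F}>-2((p^r-1)\rho_0(\mathcal{F})+\rho_1(\mathcal{F})).\]
A direct computation using $\rho_i(-\mathcal{F})=-\rho_i(\mathcal{F})$ shows $\mu_{\mathcal{F}}=\lambda<-\mathcal{F}>$, which is precisely the $\mathcal{F}$-lowest (equivalently, $-\mathcal{F}$-highest) weight of $\mathcal{L}_r(\lambda)$. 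Frobenius reciprocity for coinduced supermodules then identifies
\[Hom_{G_rT}(\mathcal{Z}'_{r,\mathcal{F}}(\lambda<\mathcal{F}>),\mathcal{Z}'_{r,\mathcal{F}'}(\lambda<\mathcal{F}'>))\simeq Hom_{B^-(\mathcal{F})_rT}(K^{|(m+1)n|}_{\lambda<-\mathcal{F}>},\mathcal{Z}'_{r,\mathcal{F}'}(\lambda<\mathcal{F}'>)),\]
and the $\mathcal{F}$-lowest weight vector of the socle $\mathcal{L}_r(\lambda)\subseteq\mathcal{Z}'_{r,\mathcal{F}'}(\lambda<\mathcal{F}'>)$ provides a nonzero element of the right-hand side. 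The main difficulty is the careful parity bookkeeping across different flags and the precise identification of $\mathcal{L}_{r,\mathcal{F}''}(\lambda<\mathcal{F}''>)$ with $\mathcal{L}_r(\lambda)$ (the ungraded formulation $\simeq K$ in the statement accommodates whatever $\mathbb{Z}_2$-grading shift arises).
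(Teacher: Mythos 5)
Your overall strategy (an adjunction plus weight considerations) is close in spirit to the paper's, and your computation that $\lambda<\mathcal{F}>-2((p^r-1)\rho_0(\mathcal{F})+\rho_1(\mathcal{F}))=\lambda<-\mathcal{F}>$ is correct. But the argument rests on a claim that is false: that the simple socle $\mathcal{L}_{r,\mathcal{F}''}(\lambda<\mathcal{F}''>)$ of $\mathcal{Z}'_{r,\mathcal{F}''}(\lambda<\mathcal{F}''>)$ is, up to parity, $\mathcal{L}_r(\lambda)$ for every flag $\mathcal{F}''$ (equivalently, that $\lambda<-\mathcal{F}>$ is the $\leq_{\mathcal{F}}$-lowest weight of $\mathcal{L}_r(\lambda)$). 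Corollary \ref{atopof} identifies $\mathcal{L}_r(\lambda)$ with the \emph{top}, not the socle, of $\mathcal{Z}'_{r,-\mathcal{F}}(\lambda<-\mathcal{F}>)$, and only for the negative of the standard flag; neither it nor Lemma \ref{duality} gives your identification. Already in the even rank-one situation with $\lambda=0$ one sees the failure: $\mathcal{Z}'_{r,-\mathcal{F}}(0<-\mathcal{F}>)\simeq\mathcal{Z}_r(0)$ (up to parity) has top the trivial-type simple but socle a Steinberg-type simple, and the $\leq_{\mathcal{F}}$-lowest weight of $\mathcal{L}_r(0)$ is $0$, not $0<-\mathcal{F}>=-2(p^r-1)\rho_0-2\rho_1$. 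This breaks both halves of your argument: the upper bound compares multiplicities of ``$\mathcal{L}_r(\lambda)$'' in domain and target, but the socles of domain and target are in general non-isomorphic simples, so restricting $\phi_i$ to the socle of the domain need not land in (a copy of) the socle of the target and the claimed second occurrence of $\mathcal{L}_r(\lambda)$ never materializes; the lower bound takes ``the $\mathcal{F}$-lowest weight vector of the socle $\mathcal{L}_r(\lambda)$'', which need not have weight $\lambda<-\mathcal{F}>$ at all.

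The paper's proof avoids socles entirely: apply Frobenius reciprocity to the target, so the Hom space equals $Hom_{B^-(\mathcal{F}')_rT}(\mathcal{Z}'_{r,\mathcal{F}}(\lambda<\mathcal{F}>),K_{\lambda<\mathcal{F}'>})$, and observe via Lemma \ref{characters} that $\lambda<\mathcal{F}'>$ is the $\leq_{\mathcal{F}'}$-largest weight of $\mathcal{Z}'_{r,\mathcal{F}}(\lambda<\mathcal{F}>)$ and occurs with multiplicity one; maximality forces the one-dimensional weight space to survive in the $B^-(\mathcal{F}')_rT$-head, giving $\dim Hom=1$ in one stroke. Your coinduction route can be repaired in the same spirit: after the adjunction $Hom_{G_rT}(coind^{G_rT}_{B^-(\mathcal{F})_rT}K_{\lambda<-\mathcal{F}>},N)\simeq Hom_{B^-(\mathcal{F})_rT}(K_{\lambda<-\mathcal{F}>},N)$, use the character formula of Lemma \ref{duality} together with Lemma \ref{characters} to see that $\lambda<-\mathcal{F}>$ is the $\leq_{\mathcal{F}}$-lowest weight of the whole target $N$ and has multiplicity one; then any vector in that weight line is automatically $U^-(\mathcal{F})_r$-invariant, which yields both the existence and the uniqueness of the morphism without any reference to $\mathcal{L}_r(\lambda)$.
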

\begin{proof}
Use the Frobenius reciprocity law and the observation that $\lambda<\mathcal{F}'>$ is the largest weight of
$\mathcal{Z}'_{r, \mathcal{F}}(\lambda<\mathcal{F}>)|_{B^-(\mathcal{F}')_r T}$ with respect to the partial order $\leq_{\mathcal{F}'}$.
\end{proof}
The proof of the following Proposition follows from Lemmas \ref{BorelsandParabolics1},  \ref{BorelsandParabolics2} and \ref{BorelsandParabolics3}, and from their analogs for $SpO(2n|2m)$ as well.
\begin{pr}\label{chainofBorels}
There is a chain of Borel supersubgroups $B_0, B_1, \ldots , B_l$ such that $B_0=B^-, B_l=B^+$, and $B_i, B_{i+1}$ are even or odd adjacent for every $0\leq i\leq l-1$.
\end{pr}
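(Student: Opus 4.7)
The plan is to explicitly construct a chain of maximal isotropic flags from $\mathcal{F}^-=\langle -1,\ldots,-n,-\bar{1},\ldots,-\bar{m}\rangle$ to $\mathcal{F}^+=\langle 1,\ldots,n,\bar{1},\ldots,\bar{m}\rangle$ in which each consecutive pair differs by one of the elementary moves supplied by the previous lemmas: an adjacent transposition of two consecutive labels in the listing (handled by Lemma \ref{BorelsandParabolics1}), or a sign flip of the last label in the listing (handled by Lemmas \ref{BorelsandParabolics2} and \ref{BorelsandParabolics3} together with their $SpO(2n|2m)$-analogs described at the end of Section 2.2). By those lemmas each such move realizes an even- or odd-adjacent pair of Borel supersubgroups, so any chain of elementary moves between the two flags immediately yields a chain of Borels of the kind we want.

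I would perform the sign changes one position at a time, sweeping from left to right. At step $k$ (for $k=1,\ldots,m+n$) the inductive hypothesis is that the first $k-1$ positions already agree with those of $\mathcal{F}^+$, while the $k$-th position holds some label of the form $-a$. The substeps would then be: (i) bubble $-a$ from position $k$ out to position $m+n$ by a sequence of adjacent transpositions (only involving positions $\ge k$); (ii) apply a single last-entry sign flip to convert $-a$ into $+a$; (iii) bubble $+a$ back from position $m+n$ to position $k$ by a second run of adjacent transpositions. A direct bookkeeping check shows that the net effect is to flip the sign of the entry at position $k$ while restoring the order of all other entries, so the first $k$ positions now coincide with $\mathcal{F}^+$ and the inductive hypothesis advances.

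After $m+n$ outer iterations the flag equals $\mathcal{F}^+$, and concatenating every elementary move performed along the way produces the required chain $B_0=B^-,B_1,\ldots,B_l=B^+$ of pairwise adjacent Borels. Every intermediate listing is automatically a maximal isotropic flag, since transpositions only reorder labels and sign flips merely switch the chosen representative of a pair $\{i,-i\}$ or $\{\bar{j},-\bar{j}\}$, so each move is legal. There is no substantive obstacle; the only mildly delicate point is the $SpO(2n|2m)$ case, where a sign flip of a $\bar{j}$-type last entry actually leaves the Borel unchanged, but this merely makes the corresponding step of the chain trivial and is harmless.
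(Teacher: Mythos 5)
Your proposal is correct and follows essentially the same strategy as the paper: both realize the passage from $B^-$ to $B^+$ by an explicit chain of maximal isotropic flags, where each step is either an adjacent transposition (Lemma \ref{BorelsandParabolics1}) or a sign flip of the last entry (Lemmas \ref{BorelsandParabolics2}, \ref{BorelsandParabolics3} and their $SpO(2n|2m)$ analogues), so every consecutive pair of Borels is even or odd adjacent. The only difference is the scheduling of the moves (you bubble each label to the end, flip, and bubble back, while the paper first sorts the barred labels forward and then flips and relocates labels one at a time), which is immaterial; your remark that the barred-label flip is trivial for $SpO(2n|2m)$ matches the paper's observation that Lemma \ref{BorelsandParabolics3} is not needed in that case.
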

\begin{proof} We describe explicitly the flags corresponding to the Borel supersubgroups in the Proposition. 

We start with the flag $\mathcal{F}_0=(1, \ldots, n, \overline{1}, \ldots , \overline{m})$. First, we use a sequence of $n$ transpositions of neighboring terms to move $\overline{1}$ through all elements from the list $\{1, \ldots, n\}$ as in 
Lemma \ref{BorelsandParabolics1}. 
Next, we use $n$ transpositions of neighboring terms to move $\overline{2}$ through all elements from the list $\{1, \ldots, n\}$ and repeat this procedure 
for $\overline{3}, \ldots, \overline{m}$. After $mn$ steps we obtain the flag $\mathcal{F}_{mn}=(\overline{1}, \ldots , \overline{m}, 1, \ldots, n)$. 

Next, we replace the rightmost element $n$ by its negative $-n$ as in Lemma \ref{BorelsandParabolics2} and move it using $m+n-1$ transpositions of neighboring terms to the first position. Repeating the same procedure with $n-1$ through $1$, after $n$ replacements and $n(m+n-1)$ transpositions we obtain the flag $\mathcal{F}_{n(2m+n)}=(-1, \ldots, -n, \overline{1}, \ldots, \overline{m})$. Finally, we replace the rightmost element $\overline{m}$ by its negative $-\overline{m}$ as in Lemma \ref{BorelsandParabolics3} and move it 
using $k-1$ transpositions to the position immediately right of $-n$. Then we repeat this for $\overline{k-1}$ through $\overline{1}$ and after $m$ replacements and $m^2-m$ transpositions we obtain $\mathcal{F}_{(m+n)^2}=-\mathcal{F}_0$.
\end{proof}

\subsection{Doty's approach}

Now one can simulate the Doty's idea from \cite{sdoty} as follows. Denote by $B_i=B^-(\mathcal{F}_i)$ the Borel supersubgroups from Proposition \ref{chainofBorels} so that 
$\mathcal{F}_0=< 1, \ldots , n, \overline{1}, \ldots , \overline{m}>$ and $\mathcal{F}_l=-\mathcal{F}_0$. For every $1\leq i\leq l$ denote by
\[f_i : \mathcal{Z}'_{r, \mathcal{F}_i}(\lambda<\mathcal{F}_i>)\to \mathcal{Z}'_{r, \mathcal{F}_{i-1}}(\lambda<\mathcal{F}_{i-1}>)\]
a non-zero homomorphism from Corollary \ref{morphismsbetweeninducedsupermodules}. Set $f=f_1\cdot\ldots\cdot f_{l-1} f_l$. If $f\neq 0$, then the image of $f$ coincides with the socle $\mathcal{L}_r(\lambda)$ of $\mathcal{Z}'_{r, \mathcal{F}_0}(\lambda<\mathcal{F}_0>)=\mathcal{Z}'_{r}(\lambda)$ by Corollary \ref{atopof}.

Assume that $\mathcal{L}_r(\mu)$ (or its parity shift) is a composition factor of $\mathcal{Z}'_{r}(\lambda)$ and $\mu\neq\lambda$. Then $\mathcal{L}_r(\mu)$ is a composition factor of some $\ker f_i$.
Let $P$ be a minimal parabolic supersubgroup that contains both $B_i=B^+(\mathcal{F}_i)$ and 
$B_{i-1}=B^+(\mathcal{F}_{i-1})$.  Since the quotient $G_r T/P_r T\simeq G_r/P_r$ is affine, the functor
$ind ^{G_r T}_{P_r T}$ is (faithfully) exact. Thus $\ker f_i=ind ^{G_r T}_{P_r T} (\ker \tilde{f}_i)$, where $\tilde{f}_i : ind ^{P_r T}_{(B_i)_r T} K_{\lambda<\mathcal{F}_i>} \to ind ^{P_r T}_{(B_{i-1})_r T} K_{\lambda <\mathcal{F}_{i-1}>}$.

If $B_i$ and $B_{i-1}$ are even adjacent or odd adjacent via an isotropic root, then the switch from $B_{i-1}$ to $B_{i}$ can be treated as in Proposition 6.2 from \cite{marzub} 
(see also Lemma 6.1 therein). We leave this part as an exercise for the reader.

Consider the case when $B_i$ and $B_{i-1}$ are odd adjacent via a non-isotropic simple root $\alpha=\delta_j\in\Phi^+ (\mathcal{F}_{i-1})$. In this case $G$ should be $SpO(2n|2m+1)$. 
Then $\lambda<\mathcal{F}_i>=\lambda<\mathcal{F}_{i-1}>-(2p^r -1)\alpha$. Combining Lemma \ref{BorelsandParabolics2} with Lemma 10.4 and Corollary 10.2 from \cite{zub1} 
(see also Proposition 11.5(1) therein) one concludes that the morphism $\tilde{f}_i$ can be identified with
\[\psi_{r, k} : H_{r, +}(-k)\to H_{r, -}(2p^r -k-1),\]
where $H_{r, +}(-k)\simeq ind^L_{\mathcal{B}^+} K_{-k}$ and $H_{r, -}(2p^r -k-1)\simeq ind^L_{\mathcal{B}^-} K_{2p^r -k-1}$. 
Here $L\simeq SpO(2|1)$, $\mathcal{B}^+=B_i\cap L$ is an "upper triangular" Borel supersubgroup of $L$, 
$\mathcal{B}^-=B_{i-1}\cap L$ is a "lower triangular" Borel supersubgroup of $L$ and $-k=(\lambda<\mathcal{F}_i>, \alpha)$.

Then the supermodule $\mathcal{L}_r(\mu)$ (or its parity shift) is a composition factor of some 
$ind^{G_r T}_{P_r T} L_{r, -}(l)$, where $L_{r, -}(l)$ is a composition factor of both kernel and and cokernel of $\psi_{r, k}$.
We have
\[ind^{G_r T}_{P_r T} L_{r, -}(l)\subseteq ind^{G_r T}_{P_r T} H^0_{r, -}(l)\simeq ind^{G_r T}_{P_r T} ind ^{P_r T}_{(B_{i-1})_r T} K_{\lambda' <\mathcal{F}_{i-1}>}=
\mathcal{Z}'_{r, \mathcal{F}_{i-1}}(\lambda'<\mathcal{F}_{i-1}>),\]
where $\lambda'<\mathcal{F}_{i-1}>$ is obtained from $\lambda<\mathcal{F}_{i-1}>$ by replacing
its $j$-th coordinate by $l$. 
Using Lemma \ref{characters} we derive that $\mathcal{L}_r(\mu)$ is a composition factor of $\mathcal{Z}'_r(\lambda')$.

\begin{lm}\label{valuesoftheform}
The following are values of the bilinear form involving the root $\alpha$.
$(\rho_0(\mathcal{F}_{i-1}), \alpha)=1, (\rho_1(\mathcal{F}_{i-1}), \alpha)=\frac{1}{2}$ and $(\rho_0, \alpha)=n-j+1, (\rho_1, \alpha)=m+\frac{1}{2}$.
\end{lm}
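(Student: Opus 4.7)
The plan is to evaluate each of the four inner products directly from the explicit description of $\Phi^+(\mathcal{F}_{i-1})$ supplied by Lemma \ref{BorelsandParabolics1}. The first step is to identify $\mathcal{F}_{i-1}$ explicitly. Tracing the chain constructed in the proof of Proposition \ref{chainofBorels}, the flag immediately preceding the replacement of $j$ by $-j$ of Lemma \ref{BorelsandParabolics2} is
\[\mathcal{F}_{i-1}=<-(j+1),-(j+2),\ldots,-n,\overline{1},\ldots,\overline{m},1,2,\ldots,j>,\]
so $a_{m+n}=j$ and $\alpha=\pi_{a_{m+n}}=\delta_j$, as expected. I use the standard invariant pairing on $X(T)\otimes\mathbb{Q}$ given by $(\delta_i,\delta_{i'})=\delta_{ii'}$, $(\epsilon_{\overline{k}},\epsilon_{\overline{k'}})=\delta_{kk'}$, and $(\delta_i,\epsilon_{\overline{k}})=0$, so that only the $\delta$-components of positive roots contribute to the pairing with $\alpha=\delta_j$.

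For the $\rho_0$-pairings I telescope over the positive even roots from Lemma \ref{BorelsandParabolics1}. Writing $(b_1,\ldots,b_n)$ for the odd part of a flag $\mathcal{F}$, the sums $\sum_{i<i'}(\pi_{b_i}+\pi_{b_{i'}})+\sum_{i<i'}(\pi_{b_i}-\pi_{b_{i'}})+\sum_i 2\pi_{b_i}$ collapse to $2\sum_i(n-i+1)\pi_{b_i}$, giving
\[\text{$\delta$-part of }\rho_0(\mathcal{F})=\sum_{i=1}^n(n-i+1)\pi_{b_i}.\]
Specialising to $\mathcal{F}^+$ yields $\sum_i(n-i+1)\delta_i$, so $(\rho_0,\delta_j)=n-j+1$. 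Specialising to $\mathcal{F}_{i-1}$, the unique index with $|b_i|=j$ is $i=n$ (with $\pi_{b_n}=\delta_j$), whence $(\rho_0(\mathcal{F}_{i-1}),\delta_j)=(n-n+1)\cdot 1=1$.

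For the $\rho_1$-pairings, the $\delta$-contributions of the three families of positive odd roots $\pi_{b_i}+\pi_{c_k}$, $\pi_{b_i}$, and $\pm(\pi_{b_i}-\pi_{c_k})$ (with the last sign determined by whether $b_i$ precedes $c_k$ in the flag) combine to
\[\text{$\delta$-part of }2\rho_1(\mathcal{F})=\sum_i\pi_{b_i}+\sum_i\bigl(m+\#\{k:b_i\text{ before }c_k\}-\#\{k:b_i\text{ after }c_k\}\bigr)\pi_{b_i}.\]
In $\mathcal{F}^+$ every $b_i$ precedes every $c_k$, so this simplifies to $(2m+1)\sum_i\delta_i$ and $(\rho_1,\delta_j)=m+\tfrac12$. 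In $\mathcal{F}_{i-1}$ the $b_i$ with $i\le n-j$ precede all $c_k$ while the $b_i$ with $i>n-j$ follow all $c_k$; only $b_n=j$ contributes to the pairing with $\delta_j$, and the coefficient of $\pi_{b_n}$ in the display above is $1+(m-m)=1$, yielding $(2\rho_1(\mathcal{F}_{i-1}),\delta_j)=1$ and hence $(\rho_1(\mathcal{F}_{i-1}),\delta_j)=\tfrac12$. The only delicate point in the whole argument is the sign bookkeeping in the last family of odd roots, but since the claim concerns only pairings with $\delta_j$, it suffices to count how many $\overline{k}$ lie before versus after $b_n=j$ in the explicit flag, which is immediate.
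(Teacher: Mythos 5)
Your proposal is correct and follows the paper's own route: it identifies $\mathcal{F}_{i-1}$ from the chain in Proposition \ref{chainofBorels} (so that its last entry is $j$ and $\alpha=\pi_{a_{m+n}}=\delta_j$), reads off $\Phi^+(\mathcal{F}_{i-1})$ from Lemma \ref{BorelsandParabolics1} to get the two $\mathcal{F}_{i-1}$-pairings, and computes the standard $\rho_0,\rho_1$ pairings directly. The only difference is that you spell out the telescoping of the root sums and the before/after count for the roots $\pm(\pi_{b_i}-\pi_{c_k})$, which the paper leaves as a direct calculation; your values agree with the lemma and with the formula for $(\lambda\langle\mathcal{F}_{i-1}\rangle,\alpha)$ used afterwards.
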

\begin{proof} According to Lemma \ref{chainofBorels}, $\mathcal{F}_{i-1}=(a_1, \ldots, a_{m+n})$, where $a_{m+n}=\delta_j$. Using Lemma \ref{BorelsandParabolics1} one 
obtains the first two equalities. The remaining equalities can be calculated directly. 
\end{proof}
Lemma \ref{valuesoftheform} implies
\[(\lambda<\mathcal{F}_{i-1}>, \alpha)=(\lambda +p^r(\rho_0(\mathcal{F}_{i-1})-\rho_0)-\rho(\mathcal{F}_{i-1})+\rho, \alpha)=(\lambda+\rho, \alpha)-p^r(n-j)-\frac{1}{2}.
\]

Now we can formulate the strong linkage for $G_r T$ as follows.
\begin{tr}\label{stronglinkageforG_r T} 
If $\mathcal{L}_r(\mu)$ (or $\Pi\mathcal{L}_r(\mu)$) is a composition factor of $\mathcal{Z}'_r(\lambda)$ and $\mu\neq\lambda$, then there is a sequence $\lambda=\lambda_0, \lambda_1, \ldots, \lambda_t=\mu$ such that for each $1\leq i\leq t$ one of the following alternatives holds.
\begin{enumerate}
\item $\lambda_{i}=\lambda_{i-1}-\alpha_i$, where $\alpha_i$ is a positive odd isotropic root such that $p|(\lambda_{i-1}+\rho, \alpha_i)$; 
\item $\lambda_i=\lambda_{i-1}-(l-l')\alpha_i$, where $\alpha_i$ is a positive odd non-isotropic root , $l$ is the smallest non-negative residue of $(\lambda+\rho, \alpha_i)-\frac{1}{2}$ modulo $p^r$ and
$l'$ is one of the weights $l-1$, $\ell_l(w)$ or $\ell_{l-1}(w)$ listed in Proposition \ref{simplesospr};
\item $\lambda_i <\lambda_{i-1}$ and $\lambda_i$ is $(\alpha_i, r)$-linked to $\lambda_{i-1}$, where $\alpha_i$ is a positive even root. 
\end{enumerate}
Moreover, the case (2) only appears for $G=SpO(2n|2m+1)$.
\end{tr}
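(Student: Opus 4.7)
The plan is to carry out the Doty-style argument that is set up in the paragraphs immediately preceding the theorem, and to induct on the partial order $\leq$ on $X(T)$. Fix the chain $B_0=B^-, B_1,\ldots,B_l=B^+$ of Borel supersubgroups produced by Proposition \ref{chainofBorels} and the composed morphism $f=f_1\circ\cdots\circ f_l$ between the induced supermodules $\mathcal{Z}'_{r,\mathcal{F}_i}(\lambda<\mathcal{F}_i>)$. By Corollary \ref{atopof} the image of $f$, if nonzero, is the socle $\mathcal{L}_r(\lambda)$; hence any composition factor $\mathcal{L}_r(\mu)$ of $\mathcal{Z}'_r(\lambda)$ with $\mu\neq\lambda$ must appear in $\ker f_i$ for some $i$. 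By Lemma \ref{characters} the composition factors of each $\mathcal{Z}'_{r,\mathcal{F}_i}(\lambda<\mathcal{F}_i>)$ agree with those of $\mathcal{Z}'_r(\lambda)$, so the strategy is to locate $\mathcal{L}_r(\mu)$ inside $\ker f_i$, present that kernel as $\mathrm{ind}^{G_rT}_{P_rT}$ of an appropriate $L$-supermodule using Lemma \ref{thickening} together with Lemmas \ref{BorelsandParabolics1}--\ref{BorelsandParabolics3}, and from this extract a weight $\lambda'<\lambda$ such that $\mathcal{L}_r(\mu)$ is a composition factor of $\mathcal{Z}'_r(\lambda')$. Induction on the $\leq$-height of $\lambda-\mu$ then completes the chain.

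The three cases in the statement correspond to the three possible adjacency types arising in Proposition \ref{chainofBorels}. For an even adjacency one applies the classical $SL(2)$-linkage inside $L\simeq GL(2)\times T'$ to pass from $\lambda<\mathcal{F}_i>$ to $\lambda<\mathcal{F}_{i-1}>$; this gives case (3) after translating the $SL(2)$-linkage condition across the Weyl shift by means of Lemma \ref{valuesoftheform}. For an odd adjacency via an isotropic root, $L\simeq GL(1|1)\times T'$ and one copies verbatim Lemma 6.1 and Proposition 6.2 from \cite{marzub}; the non-vanishing of $\tilde f_i$ forces $p\mid(\lambda_{i-1}+\rho,\alpha_i)$, yielding case (1). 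The genuinely new case is odd adjacency via a non-isotropic root $\alpha=\delta_j$, which only occurs for $G=SpO(2n|2m+1)$ as in Lemma \ref{BorelsandParabolics2}; here $L\simeq SpO(2|1)\times T'$ and, as observed before the theorem, $\tilde f_i$ is identified with the morphism $\psi_{r,k}:H^0_{r,+}(-k)\to H^0_{r,-}(2p^r-k-1)$ from Lemma \ref{morphr}, with $k=-(\lambda<\mathcal{F}_i>,\alpha)$. The simple composition factors of $\ker\psi_{r,k}$ and $\mathrm{coker}\,\psi_{r,k}$ are the ones described in Proposition \ref{simplesospr}, namely $L_{r,-}(l-1)$ and the $L_{r,-}(\ell_l(w))$, $L_{r,-}(\ell_{l-1}(w))$, and each such factor $L_{r,-}(l')$ lifts via $\mathrm{ind}^{G_rT}_{P_rT}$ to a subquotient of $\mathcal{Z}'_{r,\mathcal{F}_{i-1}}(\lambda'<\mathcal{F}_{i-1}>)$, where $\lambda'$ differs from $\lambda_{i-1}$ only in the $\alpha$-coordinate, replacing $l$ by $l'$.

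The bookkeeping on weights is what converts $l,l'$ into the concrete statement of case (2). Using Lemma \ref{valuesoftheform} one computes that the $\alpha$-coordinate of $\lambda<\mathcal{F}_{i-1}>$ equals $(\lambda+\rho,\alpha)-p^r(n-j)-\tfrac12$, so $l$ is the smallest non-negative residue of $(\lambda+\rho,\alpha)-\tfrac12$ modulo $p^r$, and $\lambda'=\lambda_{i-1}-(l-l')\alpha$. Combined with the translation between $\lambda<\mathcal{F}_{i-1}>$ and $\lambda$ via Lemma \ref{characters}, this places $\mathcal{L}_r(\mu)$ as a composition factor of $\mathcal{Z}'_r(\lambda')$ with $\lambda'<\lambda$, so that one step of the chain $\lambda\to\lambda'$ of type (2) has been produced. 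Iterating over $i$ and invoking the inductive hypothesis on $\mathcal{Z}'_r(\lambda')$ yields the full chain $\lambda=\lambda_0,\ldots,\lambda_t=\mu$.

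The main technical obstacle is case (2): one must match the composition factors of $\ker\psi_{r,k}$ and $\mathrm{coker}\,\psi_{r,k}$ (classified by the words $w\in W^r$ of first/second kind) with genuine shifts $\lambda\mapsto\lambda-(l-l')\alpha$ inside $X(T)$, and verify that the induction functor $\mathrm{ind}^{G_rT}_{P_rT}$ really does produce the advertised embedding $\mathrm{ind}^{G_rT}_{P_rT} L_{r,-}(l')\hookrightarrow \mathcal{Z}'_{r,\mathcal{F}_{i-1}}(\lambda'<\mathcal{F}_{i-1}>)$. The exactness of $\mathrm{ind}^{G_rT}_{P_rT}$ (because $G_r/P_r$ is affine) is the key structural input that makes the argument go through; the rest is careful arithmetic with the $p$-adic descriptions in Proposition \ref{simplesospr} and the Weyl-shift formulas of Lemma \ref{duality}.
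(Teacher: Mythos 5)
Your proposal is correct and follows essentially the same route as the paper: the paper's proof is exactly the Doty-style argument laid out in the ``Doty's approach'' subsection (chain of adjacent Borels from Proposition \ref{chainofBorels}, the composite map $f$, exactness of $ind^{G_rT}_{P_rT}$, identification of $\tilde f_i$ with $\psi_{r,k}$ in the non-isotropic odd case, the weight computation of Lemma \ref{valuesoftheform}, and Proposition \ref{simplesospr} for the kernel/cokernel factors), combined with the transfer back to $\mathcal{Z}'_r(\lambda')$ via Lemma \ref{characters} and iteration as in Proposition 6.2 of \cite{marzub}. Your explicit induction on the $\leq$-height of $\lambda-\mu$ is just a formalization of the same iteration.
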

\begin{proof}
The proof extends the ideas presented in the proof of Proposition 6.2 from \cite{marzub}. The cases $(1)$ and $(3)$, respectively, correspond to Borel supersubgroups $B_{i-1}$ and $B_i$ that are adjacent via a positive isotropic odd root and a positive even root, respectively; in both cases we use the same arguments as in \cite{marzub}. In the case
$(2)$ we observe that 
\[(\lambda<\mathcal{F}_{i-1}>, \alpha)=(\lambda +p^r(\rho_0(\mathcal{F}_{i-1})-\rho_0)-\rho(\mathcal{F}_{i-1})+\rho, \alpha)=(\lambda+\rho, \alpha)-p^r(n-j)-\frac{1}{2}\]
and use Proposition \ref{simplesospr}.
\end{proof}
\begin{rem}\label{replacing}
Using Lemma \ref{duality} we can show that the statement of Theorem \ref{stronglinkageforG_r T} remains true if $\mathcal{Z}'_r(\lambda)$ is replaces by $\mathcal{Z}_r(\lambda)$.
\end{rem}

\section{Linkage for $G$}

If $\mathcal{F}=< 1, \ldots , n, \overline{1}, \ldots, \overline{m} >$ is the standard flag, then we write $L(\lambda)$ for $L_{\mathcal{F}}(\lambda)$, 
$V(\lambda)$ for $V_{\mathcal{F}}(\lambda)$ and so on.

Using Theorem \ref{stronglinkageforG_r T} and Proposition 4.4 from \cite{shuw}, we extend the proof of Proposition 7.2 from \cite{marzub} to ortho-symplectic supergroups as follows.
\begin{pr}\label{simplelinkagefor G}
Assume that $L(\mu)$ (or its parity shift) is a composition factor of $V(\lambda)$. Then 
there is a positive integer $r$ and a sequence $\lambda=\lambda_0, \lambda_1, \ldots, \lambda_s=\mu$ such that for each $1\leq i\leq s$ one of the following alternatives holds.
\begin{enumerate}
\item $\lambda_{i}=\lambda_{i-1}-\alpha_i$, where $\alpha_i$ is a positive odd isotropic root such that $p|(\lambda_{i-1}+\rho, \alpha_i)$;
\item $\lambda_i=\lambda_{i-1}-(l-l')\alpha_i$, where $\alpha_i$ is a positive odd non-isotropic root, $l$ is the smallest non-negative residue of $(\lambda_i+\rho, \alpha_i)-\frac{1}{2}$ modulo $p^r$ and
$l'$ is one of the weights $l-1$, $\ell_l(w)$ or $\ell_{l-1}(w)$ listed in Proposition \ref{simplesospr};
\item $\lambda_i <\lambda_{i-1}$ and $\lambda_i$ is $(\alpha_i, r)$-linked to $\lambda_{i-1}$, where $\alpha_i$ is a positive even root. 
\end{enumerate}
As in Theorem \ref{stronglinkageforG_r T}, the case (2) only appears for $G=SpO(2n|2m+1)$.
\end{pr}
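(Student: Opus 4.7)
The plan is to lift the strong linkage for $G_rT$ (Theorem \ref{stronglinkageforG_r T}) to the full supergroup $G$, following the blueprint of Proposition 7.2 from \cite{marzub}. The mechanism is: if $L(\mu)$ is a composition factor of $V(\lambda)$, then for sufficiently large $r$, the corresponding $G_rT$-simple $\mathcal{L}_r(\mu)$ (up to parity) must appear as a composition factor of a $G_rT$-standard object built from $V(\lambda)$, and then Theorem \ref{stronglinkageforG_r T} furnishes the required chain of weights.

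First, I would invoke Proposition 4.4 of \cite{shuw}, which provides a Steinberg-type decomposition of simple $G$-supermodules on restriction to $G_rT$ (specifically $L(\mu_0+p^r\mu_1)\simeq \mathcal{L}_r(\mu_0)\otimes p^r\mu_1$ for $\mu_0$ $p^r$-restricted), together with the analogous identification of $V(\lambda)|_{G_rT}$ in terms of $\mathcal{Z}_r(\lambda_0)\otimes p^r\lambda_1$. Choosing $r$ large enough that $|(\lambda+\rho,\alpha)|, |(\mu+\rho,\alpha)|<p^r$ for all positive roots $\alpha$, and such that $\lambda_0,\mu_0$ are $p^r$-restricted, the hypothesis that $L(\mu)$ is a composition factor of $V(\lambda)$ translates into the statement that $\mathcal{L}_r(\mu)$ (up to parity shift) is a composition factor of $\mathcal{Z}_r(\lambda)$.

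Next, I would apply Theorem \ref{stronglinkageforG_r T} (together with Remark \ref{replacing}, allowing use of $\mathcal{Z}_r$) to produce a chain $\lambda=\nu_0,\nu_1,\ldots,\nu_t=\mu$ with each consecutive pair $(\nu_{i-1},\nu_i)$ falling into one of the three listed alternatives. The three alternatives transfer to the $G$-level as follows. In case $(1)$, the divisibility $p\mid(\nu_{i-1}+\rho,\alpha_i)$ for an isotropic odd $\alpha_i$ is independent of $r$, so the condition carries over verbatim. In case $(3)$, the $(\alpha_i,r)$-linkage is already the desired $G$-level even root linkage for $r$ chosen sufficiently large. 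In case $(2)$, for sufficiently large $r$ the smallest non-negative residue $l$ of $(\nu_{i-1}+\rho,\alpha_i)-\tfrac{1}{2}$ modulo $p^r$ equals $(\nu_{i-1}+\rho,\alpha_i)-\tfrac{1}{2}$ itself, and the admissible values $l'$ from Proposition \ref{simplesospr} match the $G$-level statement directly. Moreover, case $(2)$ only arises when the chain of Borel supersubgroups in Proposition \ref{chainofBorels} includes a switch via a non-isotropic odd root, which from Lemma \ref{BorelsandParabolics2} only occurs in the $SpO(2n|2m+1)$ setting.

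The main obstacle is choosing $r$ uniformly large enough so that all steps of the chain remain within the $p^r$-restricted range, since the intermediate $\nu_i$ are a priori only controlled by Theorem \ref{stronglinkageforG_r T} and could in principle drift in size; one resolves this by enlarging $r$ a priori, which is admissible because the final statement is existential in $r$. A secondary, routine technical point is tracking parity shifts through the Steinberg-type identifications and ensuring that the identifications of Weyl supermodules with $G_rT$-costandard objects produce the correct composition factors, both of which are controlled by the duality and character identities of Lemma \ref{duality} and Lemma \ref{characters}.
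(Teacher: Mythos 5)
Your proposal is correct and follows essentially the same route as the paper, which proves this proposition exactly by combining the Steinberg-type restriction result (Proposition 4.4 of \cite{shuw}) with Theorem \ref{stronglinkageforG_r T} along the lines of Proposition 7.2 of \cite{marzub}. The only point to phrase carefully is that for large $r$ the restriction $V(\lambda)|_{G_rT}$ is not identified with $\mathcal{Z}_r(\lambda_0)\otimes p^r\lambda_1$ but is an epimorphic image of $\mathcal{Z}_r(\lambda)$ (the highest weight vector generates $V(\lambda)$ over $Dist(G_r)$ for $r$ large), which is exactly what you use to conclude that $\mathcal{L}_r(\mu)$ occurs in $\mathcal{Z}_r(\lambda)$, so the argument goes through as you describe.
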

We say that the weights $\lambda$ and $\mu$ are $r$-simply-isotropically-odd-linked if there is an odd isotropic root $\alpha$ such that $p|(\lambda+\rho, \alpha)$ and 
$\mu=\lambda-\alpha$. 
Similarly, we say that the weights $\lambda$ and $\mu$ are $r$-simply-non-isotropically-odd-linked if there is a non-isotropical odd root $\alpha$ such that either 
$\mu=\lambda-(l-l')\alpha$, where $l$ is the smallest non-negative residue of $(\lambda+\rho, \alpha)-\frac{1}{2}$ modulo $p^r$ and
$l'$ is one of the weights $l-1$, $\ell_l(w)$ or $\ell_{l-1}(w)$ listed in Proposition \ref{simplesospr}, or $\mu=\lambda-(l-l')\alpha$, where $l$ is the smallest non-negative residue of $(\mu+\rho, \alpha)-\frac{1}{2}$ modulo $p^r$ and
$l'$ is one of the weights $l-1$, $\ell_l(w)$ or $\ell_{l-1}(w)$ listed in Proposition \ref{simplesospr}.

The following theorem is a consequence of Proposition \ref{simplelinkage} and Proposition \ref{simplelinkagefor G}.
\begin{tr}\label{final}
If weights $\lambda$ and $\mu$ belong to the same block, then there is a sequence $\lambda=\lambda_0, \lambda_1, \ldots, \lambda_s=\mu$ such that for each $1\leq i\leq s$ 
the neighbors $\lambda_i$ and $\lambda_{i-1}$ are either $r$-simply-isotropically/non-isotropically-odd-linked or $(\alpha, r)$-linked for an even root $\alpha$ and a 
positive number $r$. 
\end{tr}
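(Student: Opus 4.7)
The plan is to concatenate the per-step data produced by Proposition \ref{simplelinkagefor G} along a chain of nontrivial $\mathrm{Ext}^1$s coming from the definition of a block. First I would unpack the hypothesis: since $\lambda$ and $\mu$ lie in the same block, there is a sequence of simple supermodules $L(\lambda)=L(\nu_0),L(\nu_1),\dots,L(\nu_t)=L(\mu)$ (and parity shifts, which are absorbed into $\mathrm{Ext}^1$) such that for each $1\le i\le t$ at least one of $\mathrm{Ext}^1_G(L(\nu_{i-1}),L(\nu_i))$ or $\mathrm{Ext}^1_G(L(\nu_i),L(\nu_{i-1}))$ is nonzero.

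Next, for each such $i$, I would apply Proposition \ref{simplelinkage}. This yields that (possibly after a parity shift) either $L(\nu_{i-1})$ is a composition factor of $H^0(\nu_i)$ or $L(\nu_i)$ is a composition factor of $H^0(\nu_{i-1})$. Using Remark \ref{forWeyltoo}, together with the fact that $V(\lambda)$ and $H^0(\lambda)$ have the same composition factors via the supertransposition duality, the same alternative holds for Weyl supermodules. Hence Proposition \ref{simplelinkagefor G} applies in one of the two directions and produces an intermediate sequence from $\nu_{i-1}$ to $\nu_i$ (or from $\nu_i$ to $\nu_{i-1}$) whose consecutive neighbors are related by one of the three elementary moves: $r$-simply-isotropically-odd-linked, $r$-simply-non-isotropically-odd-linked (only in the $SpO(2n|2m+1)$ case), or $(\alpha,r)$-linked for a positive even root $\alpha$.

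Finally, I would concatenate these intermediate sequences, reading each from $\nu_{i-1}$ towards $\nu_i$, to obtain the desired chain $\lambda=\lambda_0,\lambda_1,\dots,\lambda_s=\mu$. The main technical point to verify — and which I expect to be the only real obstacle — is that each elementary link is symmetric, so that a subchain produced in the "wrong" direction by Proposition \ref{simplelinkagefor G} can be reversed without leaving the allowed list of moves. For the even $(\alpha,r)$-link this is built into the definition. For the isotropic odd link, the condition $p\mid(\lambda_{i-1}+\rho,\alpha)$ together with $\lambda_i=\lambda_{i-1}-\alpha$ gives $p\mid(\lambda_i+\rho,-\alpha)$ and $\lambda_{i-1}=\lambda_i-(-\alpha)$, so reversing amounts to replacing $\alpha$ by $-\alpha$. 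For the non-isotropic odd link the definition in the excerpt already explicitly allows either of $\lambda$ or $\mu$ to be the reference point for the residue $l$, which is exactly what is needed to traverse a subchain in either direction. Once this symmetry is recorded, the concatenation yields the stated chain and the theorem follows.
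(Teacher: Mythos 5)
Your proposal is correct and follows essentially the same route as the paper, which derives Theorem \ref{final} precisely by combining Proposition \ref{simplelinkage} (with Remark \ref{forWeyltoo}) and Proposition \ref{simplelinkagefor G} along a chain of nonvanishing $Ext^1$'s coming from the block definition. Your explicit check that each elementary link is reversible (using isotropy of $\alpha$ for the odd isotropic case and the two-sided formulation of the non-isotropic odd link) is exactly the implicit point the paper leaves to the reader.
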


\end{document}